 \newtheorem{theorem}{Theorem}[section]
 \newtheorem{proposition}[theorem]{Proposition}
 \newtheorem{lemma}[theorem]{Lemma}
 \newtheorem{corollary}[theorem]{Corollary}
 \theoremstyle{definition}
 \theoremstyle{remark}
 \newtheorem{remark}[theorem]{Remark}
\numberwithin{equation}{section}
\renewcommand{\Re}{\operatorname{Re}}
\newcommand{\rg}{\operatorname{rg}}
\newcommand{\R}{\mathbb{R}}
\newcommand{\la}{\lambda}
\newcommand{\mb}[1]{\textbf{#1}}
\newcommand{\bm}[1]{{\bf #1}}
\newcommand{\mc}[1]{\mathcal{#1}}
\newcommand{\B}{\mathbb{B}}
\newcommand{\inner}[1]{\langle #1 \rangle}
\newcommand{\norm}[1]{\| #1 \|}
\newcommand{\hsob}[1]{\dot{H}^{#1}(\R^n)}
\newcommand{\core}{C^{\infty}_{c,\text{rad}}(\R^n)}
\title[]{Globally stable blowup profile for supercritical wave maps in all dimensions}
\author{Irfan Glogi\'c}
\address{Department of Mathematics, University of Vienna, Oskar-Morgenstern-Platz 1, 1090 Vienna, Austria}
\email{irfan.glogic@univie.ac.at}
\thanks{The author acknowledges support by the Austrian Science Fund FWF, Projects P 30076 and P 34378.}
\begin{document}

\begin{abstract}
	
We consider wave maps from the $(1+d)$-dimensional Minkowski space into the $d$-sphere. It is known from the work of Bizo\'n and Biernat \cite{BizBie15} that in the energy-supercritical case, i.e., for $d \geq 3$, this model admits  a closed-form corotational self-similar blowup solution. We show that this blowup profile is globally nonlinearly stable for all $d \geq 3$, thereby verifying a perturbative version of the conjecture posed in \cite{BizBie15} about the generic large data blowup behavior for this model. To accomplish this, we develop a novel stability analysis approach based on similarity variables posed on the whole space $\R^d$. As a result, we draw a general road map for studying spatially global stability of self-similar blowup profiles for nonlinear wave equations in the radial case for arbitrary dimension $d  \geq 3$.
\end{abstract}

\maketitle
\section{Introduction}
\noindent 
 We consider maps $U$ from the Minkowski space $(\R^{1+d},\eta)$ into the $d$-sphere $(\mathbb{S}^d,g)$.
Here, $(\eta_{\alpha\beta})=\text{diag} (-1,1,\dots,1)$ in the Euclidean coordinate system on $\R^{1+d}$, and $g$ is the standard round metric on $\mathbb{S}^d$.
 We say that $U$ is a \emph{wave map} if it is a critical point (under compactly supported variations) of the  functional
\begin{equation}\label{Eq:Functional}
	S[U]:=\frac{1}{2} \int_{\R^{1+d}} \text{tr}_{\eta}(U^*g) \, \text{dvol}_\eta,
\end{equation}
where $\text{tr}_{\eta}(U^*g)$ is the trace (with respect to $\eta$) of the metric $g$ when pulled back to $\R^{1+d}$ via $U$. To get a more explicit formulation, we put local coordinates on the domain and the target. First, we define the so-called \emph{normal coordinates} $U=(U^1,\dots,U^d)$ on $\mathbb{S}^d$ by letting
\begin{equation}\label{Def:NormCoord}
	U^i:= \theta \hspace{-0.5mm} \cdot \hspace{-0.5mm} \Omega^i, \quad \text{for} \quad i=1,\dots,d,
\end{equation}
where $\theta$ is the polar angle, and $\Omega^i$ are the coordinate functions of the embedding $\mathbb{S}^{d-1} \hookrightarrow \R^{d}$. Then, by placing the standard Euclidean coordinates on the Minkowski space, we get that the Euler-Lagrange equations associated to \eqref{Eq:Functional} take the following form\footnote{Here, and throughout the paper,  the Einstein summation convention is in force, with Greek indices running from $0$ to $d$, and Latin indices going from $1$ to $d$. Also, the indices are raised and lowered with respect to the Minkowski metric $\eta$.}
\begin{equation}\label{Eq:Euler-Lagrange}
	\partial^\mu \partial_\mu U^i	+ \Gamma^i_{jk}(U)\partial^\mu U^j \partial_\mu U^k=0, \quad i=1 \dots d,
\end{equation}
where $\Gamma^i_{jk}$ are the Christoffel symbols for the metric $g$ in normal coordinates \eqref{Def:NormCoord}. We note that \eqref{Eq:Euler-Lagrange} is a coupled nonlinear hyperbolic system of partial differential equations, and the main mathematical interest lies in the underlying Cauchy problem. Namely, one desires to study the following initial value problem for the vector function $U=U(t,x)$
\begin{equation}\label{Eq:WM}
	\begin{cases}
		~\partial^\mu \partial_\mu U^i	+ \Gamma^i_{jk}(U)\partial^\mu U^j \partial_\mu U^k=0, \quad i=1 \dots d, \\
		~U(0,\cdot)=U_0,\\
		~\partial_t U(0,\cdot) = U_1,
	\end{cases}
\end{equation}
where $(t,x) \in I\times \R^d$ for some interval $I \subseteq \R$ containing zero, and $U_0,U_1: \R^d \rightarrow \R^d $ are the initial data.

In view of the nonlinear nature of the problem \eqref{Eq:WM}, the central question is the one of regularity versus blowup: Can smooth and localized initial data lead to singularity formation in finite time? A positive answer to this question then gives rise to an important follow-up problem, namely the one of determining all of the stable blowup profiles. 
In regard to the existence question, there is a useful heuristic principle, which is based on the relation between the nonlinear scaling and the conserved quantities. To expand on this, we first note that the system \eqref{Eq:WM} is invariant under the rescaling $U \mapsto U_\la$, where
\begin{equation}\label{Def:Non_scaling}
	U_\la(t,x):= U( t/\la, x/\la ), \quad \la >0.
\end{equation}
Furthermore, we have that the \emph{energy}
\begin{equation*}
	E[U](t):=\frac{1}{2} \int_{\R^{d}} \delta^{\alpha \beta} g_{ij}(U(t,x)) \partial_\alpha U^i(t,x) \partial_{\beta} U^j(t,x) dx
\end{equation*}
is conserved under the flow, and furthermore scales under  \eqref{Def:Non_scaling} in the following way
\begin{equation*}
	E[U_\la](t)= \la^{d-2} E[U](t/\la).
\end{equation*}
Consequently, the wave maps system \eqref{Eq:WM} is \emph{energy-subcritical} when $d=1$, \emph{energy-critical} when $d=2$, and \emph{energy-supercritical} when $d \geq 3$. 
The aforementioned heuristic principle says that the subcritical equations are globally regular, while the supercritical ones do admit blowup. 
That this heuristic is true for \eqref{Eq:WM} when $d=1$ has been known since 1980's, from the works of Gu \cite{Gu80}, Ginibre and Velo \cite{GinVel82}, and Shatah \cite{Sha88}. In contrast to this, in both the critical and the supercritical case blowup is possible. This can be observed already for a special class of the so-called \emph{corotational} maps. We say that a wave map $U$ is corotational if
\begin{equation}\label{Def:CorAnsatz}
	U^i(t,x) = u(t,|x|) \, x^i , \quad i = 1 \dots d,
\end{equation}
for some $u(t,\cdot):[0,\infty) \rightarrow \R$, which we call the \emph{radial profile} of $U(t,\cdot)$. Under this ansatz, \eqref{Eq:WM} reduces to a Cauchy problem for a single $(d+2)$-dimensional radial semilinear wave equation for the profile $u=u(t,r)$
\begin{equation}\label{Eq:CorWM}
	\begin{cases}
		~\displaystyle{\partial^2_{t}u-\partial_r^2u-\frac{d+1}{r}\partial_ru + \frac{d-1}{2r^3}\big( \hspace{-0.5mm}\sin(2ru)-2ru\big)=0},\\ \smallskip
		~u(0,\cdot)=u_0, \\
		~\partial_t u(0,\cdot)=u_1,
	\end{cases}
\end{equation}
where the initial condition is obtained from the corotational initial data
\[
U_0(x)= u_0(|x|)  x, \quad U_1(x)= u_1(|x|)  x.
\]
In the critical case, $d=2$, the problem \eqref{Eq:CorWM} has been studied by Bizo\'n, Chmaj and Tabor \cite{BizChmTab01}, who numerically observed finite time blowup at the origin for large initial data. This was then followed by rigorous constructions by Krieger, Schlag, and Tataru \cite{KriSchTat08}, and Rapha\"el and Rodnianski \cite{RapRod12}.

Exhibiting blowup in the supercritical case, $d \geq 3$, is easier, as there exist self-similar solutions. The first construction is due to Shatah \cite{Sha88}, who proved existence for $d=3$ by variational techniques. Two years later, what is believed to be the Shatah's solution was found in closed form by Turok and Spergel \cite{TurSpe90}. 
%
%Namely, they noted that
%\begin{equation}\label{Def:TS_sol}
%	v_T(t,r):=\frac{1}{T-t}\phi\left(\frac{r}{T-t}\right), \quad \phi(\rho)= \frac{2}{\rho}\arctan\left(\rho\right), \quad T>0,
%\end{equation}
%solves \eqref{Eq:CorWM} for $d=3$.
%%
Then, relatively recently, 
  Bizo\'n and Biernat \cite{BizBie15}
 discovered the analogue of the Turok-Spergel profile in higher dimensions. Namely, they noted that \eqref{Eq:CorWM} admits for all $d\geq 3$ the following solution
\begin{equation}\label{Def:BB_sol}
u_T(t,r):=\frac{1}{T-t}\phi\left(\frac{r}{T-t}\right), \quad \phi(\rho)= \frac{2}{\rho}\arctan\left(\frac{\rho}{\sqrt{d-2}}\right), \quad T>0.
\end{equation}
Note that this, via \eqref{Def:CorAnsatz}, yields for the system \eqref{Eq:WM} a self-similar corotational solution 
\begin{equation}\label{Def:BB_sol_vector}
U_T(t,x):=  \Phi\left(\frac{x}{T-t}\right), \quad \Phi(x)= \phi(|x|)  x, \quad T>0,
\end{equation}
which suffers gradient blowup at the origin as $t \rightarrow T^-$.

\subsection{The main result}
The role of the solution \eqref{Def:BB_sol_vector} for generic Cauchy evolutions of \eqref{Eq:WM} was first studied from the numerical point of view by Bizo\'n, Chmaj and Tabor \cite{BizChmTab00} for $d=3$, and then by Bizo\'n and Biernat \cite{BizBie15} for higher dimensions. 
In both cases, the authors made the observation that generic large corotational data evolutions lead to blowup via \eqref{Def:BB_sol_vector}. In this paper we rigorously prove the perturbative version of these observations, i.e., we establish the global-in-space nonlinear stability of the profile $U_T$ for all $d \geq 3$. More precisely, we show  the existence of an open set (in a suitably chosen topology given by Sobolev norms) of corotational initial data around 
\begin{equation}\label{Eq:Cor_initial_data}
	U_1(0,\cdot)= \Phi, \quad \partial_tU_1(0,\cdot) = \Lambda \Phi, \quad \text{where} \quad  \Lambda f(x): = x  \cdot \nabla f(x),
\end{equation}
for which the Cauchy evolution of \eqref{Eq:WM} forms a singularity in finite time $T$ by converging to $U_T$ (i.e., to $\Phi$, after undoing the self-similar scaling) globally in space in the underlying norm as well as uniformly on compact sets. The formal statement is as follows.

\begin{theorem}\label{Thm:Main}
	Let $d \geq 3$ and $s,k >0$ such that
	\begin{equation}\label{Eq:s,k}
		\frac{d}{2} < s < \frac{d}{2} + \frac{1}{2d}, \quad k > d+2, \quad k \in \mathbb{N}.
	\end{equation}
	Then there exists $\varepsilon>0$ such that for any corotational initial data of the form
	\begin{equation}\label{Eq:Init_data_main_result}
		(U_0,U_1) = (\Phi, \Lambda \Phi) + (\varphi_0,\varphi_1),
	\end{equation}
	where $\varphi_0,\varphi_1$ are Schwartz functions for which
	\begin{equation}\label{Eq:Init_data_small}
		\| (\varphi_0,\varphi_1) \|_{\dot{H}^{s} \cap \dot{H}^{k}(\R^d) \, \times \, \dot{H}^{s-1} \cap \dot{H}^{k-1} (\R^d)} < \varepsilon,
	\end{equation}
%	\begin{equation*}
%		\| \varphi_0 \|_{\dot{H}^{s} \cap \dot{H}^{k}(\R^d)} + 	\| \varphi_1 \|_{\dot{H}^{s-1} \cap \dot{H}^{k-1}(\R^d)}  < \varepsilon,
%	\end{equation*}
	there exists $T>0$ and a unique classical solution $U \in C^\infty([0,T)\times \R^d)$ to \eqref{Eq:WM}, which blows up at the origin as $t \rightarrow T^-$. Furthermore, the following profile decomposition holds
	\begin{equation}\label{Eq:Profile_decomposition}
		U(t,x)=\Phi\left(\frac{x}{T-t}\right) + \varphi\left(t,\frac{x}{T-t}\right),
	\end{equation}
where
\begin{equation}\label{Eq:varphi_zero}
	\|  \varphi(t,\cdot) \|_{\dot{H}^{r}(\R^d)} + \norm{((T-t)\partial_t +\Lambda) \varphi(t,\cdot)}_{\dot{H}^{r-1}(\R^d)}  \rightarrow 0 \quad \text{as} \quad  t \rightarrow T^-
\end{equation}
for all $r \in [s,k]$. In particular,
\begin{equation}\label{Eq:uniform_on_compact}
	  U(t,(T-t)\cdot) \rightarrow \Phi
\end{equation}
locally uniformly in $\R^d$ as $t \rightarrow T^-$.
\end{theorem}

 \begin{remark}
 	The seemingly strange condition on $s,k$ serves a twofold purpose. First, imposing $s,k > \frac{d}{2}$ ensures the decay of the linear flow in the topology given by the norm \eqref{Eq:Init_data_small}. Second, the additional restrictions in \eqref{Eq:s,k} guarantee that the underlying nonlinear operator is locally Lipschitz continuous in this topology; see Lemma \ref{Lem:Nonlin_est} and Proposition \ref{Prop:Schauder}.
 \end{remark}

\begin{remark}
	The profile decomposition \eqref{Eq:Profile_decomposition}, together with \eqref{Eq:varphi_zero}, shows that the evolution of the perturbation \eqref{Eq:Init_data_main_result}, when dynamically self-similarly rescaled, converges back to $(\Phi,\Lambda \Phi)$ in $\dot{H}^r \times \dot{H}^{r-1}(\R^d)$. This is precisely what we mean by the stability of the self-similar blowup via $U_T$.
\end{remark}

\begin{remark}
	We note that \eqref{Eq:uniform_on_compact} corresponds precisely to what has been observed numerically in \cite{BizBie15}; see Figure 1 there. What is more,  \eqref{Eq:uniform_on_compact} is a fundamentally new rigorous insight, which stems from our global-in-space framework, and does not follow from the existing stability analysis approaches; see more on previous work in Section \ref{Sec:Related_results} below. This follows from the local $L^\infty$-embedding of the intersection Sobolev space in \eqref{Eq:Init_data_small}, although it also straightforwardly follows from our stability analysis, as a consequence of convergence of the rescaled radial profile $u(t,(T-t)\cdot)$ to $\phi$ globally uniformly on $[0,\infty)$. 
\end{remark}
 
 \subsection{Related results and discussion}\label{Sec:Related_results}
 
 To put our work in perspective, we first briefly outline the most important developments on the problem of stability of self-similar blowup for wave maps. Then, we illustrate how the approach of this paper complements and improves upon the existing stability analysis techniques.
 
 Stability of self-similar solutions for wave maps was first explored by Bizo\'n. In \cite{Biz00} he developed heuristics for the blowup stability analysis in lightcones (see also \cite{BizChmTab00}), and in \cite{Biz05} he formulated and numerically studied the spectral problem that underlies the linear stability analysis of the solution \eqref{Def:BB_sol_vector}. Putting Bizo\'n's heuristics on rigorous ground was later taken up by Donninger \cite{Don10a,Don09,Don10}, whose approach turned out quite fruitful, resulting in a multitude of results on self-similar blowup stability in lightcones; this corresponds to showing convergence of type \eqref{Eq:uniform_on_compact}, but on the unit ball $\B^d$. Here, we survey the results that pertain to wave maps. The first nonlinear stability result for \eqref{Def:BB_sol_vector} was established for $d=3$ in a pair of papers by Donninger \cite{Don11}, and by Aichelburg, Donninger and Sch\"orkhuber \cite{DonSchAic12}. These results, however, assume spectral stability of \eqref{Def:BB_sol_vector}, which hinges on a difficult non-self-adjoint spectral problem. A resolution of this problem appeared several years later in a paper by Costin, Donninger and Xia \cite{CosDonXia16}, and then in a significantly simpler form in the work of Costin, Donninger and the author \cite{CosDonGlo17}. What is more, \cite{CosDonGlo17} establishes the spectral stability of \eqref{Def:BB_sol_vector} for all $d \geq 3$, and furthermore sets up a general scheme for showing similar spectral results in wider semilinear contexts. For additional descriptions of this method, see the author's thesis \cite{Glo18}, or his work with Sch\"orkhuber \cite{GloSch21}. Later on, using the results from \cite{CosDonGlo17}, Chatzikaleas, Donninger and the author \cite{ChaDonGlo17} established the nonlinear stability of \eqref{Def:BB_sol_vector} in all higher odd dimensions. We point out that, due to the local nature of the underlying coordinate frame, \cite{ChaDonGlo17} gives stability of \eqref{Def:BB_sol_vector} along the shrinking time slices $\{t\} \times \B^d_{T-t}$.  How to extend \cite{ChaDonGlo17} to even dimensions is not clear at the moment, due to the apparent lack of befitting inner products that characterize Sobolev norms on bounded domains, and which are necessary in order to establish decay of the linearized wave maps flow in lightcones. 
  Another, more recent stability analysis approach was put forward by Biernat, Donninger and Sch\"orkhuber  \cite{BieDonSch21}. They use the so-called hyperboloidal similarity variables to keep track of the evolution up to the future lightcone of the blowup point, and they thereby prove for $d=3$ stability of \eqref{Def:BB_sol_vector} in regions that include portions of the spacetime that go beyond the time of blowup.
 For an extension of this work to higher odd dimensions, see Donninger and Ostermann \cite{DonOst21}. For a recent result on stability of \eqref{Def:BB_sol_vector} at the optimal regularity, see Donninger and Wallauch \cite{DonWal22}.
 
 %\smallskip
 
 In this paper we undertake a new line of attack to the problem of stability of self-similar blowup for nonlinear wave equations. The key novel feature is a global coordinate frame given by the similarity variables that are posed on the whole space $\R^d$. This approach therefore allows for capturing the evolution of perturbations of self-similar profiles globally on $\R^d$. Consequently, in contrast to the existing approaches, we get stability along horizontal and spatially global time slices $\{ t \} \times \R^d$. In particular, as a simple by-product, we get that blowup can not happen outside the origin. Furthermore, the fact that we are working on $\R^d$ allows us to utilize the Fourier analysis techniques and thereby develop a unified framework for all dimensions $d \geq 3$. Additionally, in terms of further applications, we point out that our approach provides the necessary framework for studying the self-similar threshold for blowup phenomena (see, e.g., \cite{Biz01,BieBizMal17,GloMalSch20}), where it is required to consider perturbations on the whole space $\R^d$.  
As a result, in addition to proving Theorem \ref{Thm:Main}, in this paper we draw a general road map for studying global radial stability of self-similar blowup profiles for nonlinear wave equations in arbitrary dimension $d  \geq 3$. Before we outline the proof of the main result, and thereby chart out our approach, we include a section on the historical developments in the study of wave maps.

 \subsection{Some history on wave maps}

In the most general setting, wave maps are defined on a Lorentzian manifold with values in a Riemannian manifold. They therefore represent the hyperbolic analogue of the much studied harmonic maps. 
In the physics literature, wave maps appeared as models for particle physics, and go under the name of (nonlinear) $\sigma$-models.
 For other physical applications, and for more historical remarks, see Misner \cite{Mis78,Mis82}.
From the mathematical point of view, wave maps have attracted a great deal of attention over the past several decades. 
The type that has garnered most attention so far are the ones with the flat Minkowski space domain,\footnote{The theory for the Minkowski space is already so complicated that so far little attention has been devoted to the general case. What is more, some authors even define wave maps as the ones on the flat Minkowski space.}
and the basic mathematical questions for the underlying Cauchy problem concern local well-posedness, global regularity and finite time blowup. 

 \emph{Local well-posedness}:
 Since the wave maps system is semilinear, local well-posedness in Sobolev spaces of high order is standard; see Ginibre and Velo \cite{GinVel82} for a result in  $H^s \times H^{s-1}(\R^d)$ with integer $s \geq  \lfloor\frac{d}{2}\rfloor + 2$.  The first well-posedness result at the critical regularity $s=\frac{d}{2}$ under symmetry restrictions is due to Shatah and Tahvildar-Zadeh \cite{ShaTah94}. Later on, assuming no symmetries, Klainerman and Machedon \cite{KlaMac97a}, Klainerman and Selberg \cite{KlaSel97}, and Keel and Tao \cite{KeeTao98} established well-posedness of ~\eqref{Eq:WM} (for more general targets) for $s > \frac{d}{2}$.
Subsequently, Tataru \cite{Tat98,Tat01} proved well-posedness outside symmetry for data in the scaling critical Besov space for all $d\geq2$. In line with this, see also a recent paper by Candy and Herr \cite{CanHer18}. For an ill-posedness result at the critical regularity for $d=1$ see Tao \cite{Tao00}. For a more detailed overview, and an in-depth discussion of the question of well-posedness, see the survey paper by Tataru \cite{Tat04}.

%\smallskip

 \emph{Global regularity}: Although for $d \geq 2$ wave maps do admit blowup, it is expected that smooth data that are small in the scaling invariant Sobolev space $\dot{H}^{{d/2}}\times\dot{H}^{{d/2}-1}$, lead to global regularity, irrespective of the target geometry; see Klainerman  \cite{Kla97}. First results of this type are due to Tao \cite{Tao01,Tao01a} for spherical targets. The extensions to more general manifolds are due to Klainerman and Rodnianski \cite{KlaRod01}, Krieger \cite{Kri03a}, Nahmod, Stefanov and Uhlenbeck \cite{NahSteUhl03} and Shatah and Struwe \cite{ShaStr02}. For large data evolutions, the main problem is to determine the types of manifolds that ensure global regularity. According to a heuristic principle, this is expected to hold for negatively curved targets, in view of their defocusing effect. Confirmation of this heuristic in equivariance symmetry is due to Shatah and Tahvildar-Zadeh \cite{ShaTah92} for $d = 2$. Without symmetry restrictions, however, matters are more complicated, and most of the effort so far has been employed for the target of constant negative curvature, the hyperbolic space. As a result, the global regularity for $d=2$ was settled in a series of works by Tao \cite{Tao09}, Sterbenz and
Tataru \cite{SteTat10,SteTat10a} and Krieger and Schlag \cite{KriSch12}. For supercritical dimensions, $d \geq 3$, it turns out that mere negative curvature is not enough to preclude blowup, even in the case of corotational maps. This was shown by Cazenave, Shatah and Tahvildar-Zadeh \cite{ShaStr98}  for $d = 7$, and then  for $d \geq 8$ by Donninger and the author \cite{DonGlo19}, who also showed stability of the exhibited blowup. In the complementary case, $3 \leq d \leq 6$, the existence of blowup is still open. 

 \emph{Finite time blowup}: For positively curved targets, in view of the underlying focusing effect, large data evolutions are expected to form singularities. 
 In the critical case, $d=2$, the first rigorous constructions for the sphere are due to Krieger, Schlag and Tataru \cite{KriSchTat08}, and Rodnianski and Sternbenz \cite{RodSte10}. The former result was extended by Gao and Krieger \cite{GaoKri15}, while stability of their solutions was shown by Krieger and Miao \cite{KriMia20}. Another notable construction is due to Raph\"el and Rodnianski \cite{RapRod12}, who also prove stability of the constructed blowup profile. In the supercritical case, $d \geq 3$, exhibiting blowup is easier, as self-similar solutions exist. The earliest result goes back to Shatah \cite{Sha88} in $d=3$ for the sphere. Later on, Bizo\'n \cite{Biz00} proved that in addition to Shatah's profile there are infinitely many corotational self-similar solutions into the sphere. Then, together with Biernat and Maliborski \cite{BieBizMal17}, he extended this result to $4 \leq d \leq 6$, and furthermore conjectured that for $d \geq 7$ there exists exactly one corotational self-similar profile, namely \eqref{Def:BB_sol_vector}. Interestingly, for the same range, $d \geq 7$, a new, non-self-similar blowup mechanism was discovered by Ghoul, Ibrahim and Nguyen \cite{GhoIbrNgu18}.

 \subsection{Outline of the proof of the main result}
 
 Since the system \eqref{Eq:WM} in corotational symmetry reduces to the radial wave equation \eqref{Eq:CorWM}, the bulk of our work consists of showing stability of the solution \eqref{Def:BB_sol}. Then, by using the equivalence of norms of corotational maps and their radial profiles, we turn the obtained stability result into Theorem \ref{Thm:Main}.
 %
% \smallskip
In Section \ref{Sec:Simil_var}, we first, for convenience, let $v(t,\cdot):=u(t,|\cdot|)$ and thereby reformulate \eqref{Eq:CorWM} into a semilinear wave equation for $v$ in $n:=d+2$ dimensions
 \begin{equation}\label{Eq:NLW_intro}
 	\displaystyle{	~~\partial^2_t v - \Delta v = \frac{n-3}{2|x|^3}}\left(2|x|v - \sin (2|x|v)\right),
 \end{equation}
  with radial initial data
  \begin{equation}\label{Eq:Init_data_intro}
  	v_0=u_0(|\cdot|) \quad \text{and} \quad v_1=u_1(|\cdot|).
  \end{equation}
   Then, we introduce the (global) similarity variables 
 \begin{equation*}\label{Def:SS_var_intro}
 	\tau=\tau(t):=\ln \left(\frac{T}{T-t}\right), \quad \xi=\xi(t,x):=\frac{x}{T-t}.
 \end{equation*}
 Note that, in this way the time slab $[0,T) \times \R^n$ is mapped via $(t,x) \mapsto (\tau,\xi)$ into the half-space $[0,\infty) \times \R^n$.
 Furthermore, by also rescaling the dependent variables
 \begin{equation*}\label{Def:scaled_v_intro}
 	(T-t)v(t,x)=:\psi_1(\tau,\xi) \quad \text{and} \quad (T-t)^2\partial_tv(t,x)=:\psi_2(\tau,\xi),
 \end{equation*}
 the self-similar solution
 \begin{equation}\label{Def:SS_sol_intro}
 	\big(v_T(t,\cdot),\partial_t v_T(t,\cdot)\big):=\big(u_T(t,|\cdot|),\partial_t u_T(t,|\cdot|)\big)
 \end{equation} becomes $\tau$-independent; we denote it by $\Psi_{0}=\Psi_{0}(\xi)$. 
 Most importantly, in this way we turn the problem of stability of finite time blowup of \eqref{Def:SS_sol_intro}, into a generally more familiar problem of the asymptotic stability of the static solution $\Psi_{0}$. However, to carry out the stability analysis of $\Psi_0$, we first need a well-posedness theory for the Cauchy evolution in the new variables. To this end, we first let $\Psi(\tau):=\big(\psi_1(\tau,\cdot),\psi_2(\tau,\cdot) \big)$. By that, the problem \eqref{Eq:NLW_intro}-\eqref{Eq:Init_data_intro} adopts the following first order vector form
 \begin{equation}\label{Eq:WM_system_abstract_intro}
 	\begin{cases}
 		\partial_\tau\Psi(\tau)=\widetilde{\mb L}_0\Psi(\tau)+\mb N_0(\Psi(\tau)),\\	
 		\Psi(0)=\mb U_0(T),
 	\end{cases}
 \end{equation}
where $ \widetilde{\mb L}_0$ is the $n$-dimensional
 wave operator in similarity variables (see \eqref{Def:Wave_oper_sim}), $\mb N_0$ is the remaining nonlinear operator, and the initial data are $\mb U_0(T) = \big( T v_0(T\cdot) , T^2v_1(T\cdot) \big).$
 In order to study dynamics near the static profile $\Psi_{0}$, we consider the perturbation ansatz $\Psi(\tau)=\Psi_{0}+\Phi(\tau)$. This leads to the central Cauchy problem of the paper
 \begin{equation}\label{Eq:AbsEvolPhi}
 	\begin{cases}
 		\partial_\tau \Phi(\tau)=\widetilde{\mb L}_0\Phi(\tau)+\mb{L}'\Phi(\tau)+{\mb N}(\Phi(\tau)),\\
 		\Phi(0)= \mb U (T).
 	\end{cases}
 \end{equation}
 Here $\mb L'$ is the Fr\'echet derivative of $\mb N_0$ at $\Psi_0$, $\mb N$ is the remaining nonlinear operator, and the initial data are $\mb U (T)= \mb U_0(T)- \Psi_0$. 
 To study evolutions of \eqref{Eq:AbsEvolPhi}, we need an appropriate functional setup. To that end, in Section \ref{Sec:Funct_setup} we introduce the principal function space of the paper
 \begin{equation*}
 	\mc H^{s,k}:= \dot{H}_{\text{rad}}^{s} \cap \dot{H}_{\text{rad}}^{k}(\R^n) \, \times \, \dot{H}_{\text{rad}}^{s-1} \cap \dot{H}_{\text{rad}}^{k-1}(\R^n) ,
 \end{equation*}
with exponents $s,k$ to be fixed later.
 To construct solutions to \eqref{Eq:AbsEvolPhi} in $\mc H^{s,k}$, we take up the abstract semigroup approach. 
 In Section \ref{Sec:Free_semigroup} we concentrate on the free operator $\widetilde{\mb L}_0$, and prove the central result of the linear theory. Namely, we show that for every $n \geq 3$, the operator $\widetilde{\mb L}_0$, being initially defined on test functions, is closable, and its closure $\mb L_0$ generates a strongly continuous semigroup $\mb S_0(\tau)$  on $\mc H^{s,k}$; see Proposition \ref{Prop:Free_semigroup}. The proof is  based on the Lumer-Phillips theorem, and it entails a delicate construction of global, radial and decaying solutions to a certain degenerate elliptic equation; see Lemma \ref{Lem:Closed_range}. After establishing this result, it is not difficult to propagate it  perturbatively to $\mb L:=\mb L_0 + \mb L'$. Namely, in Section \ref{Sec:Perturbed_semigroup}, we show that the operator $\mb L': \mc H^{s,k} \rightarrow \mc H^{s,k}$ is compact, and then we readily infer that $\mb L$ too generates a semigroup on $\mc H^{s,k}$. We denote this semigroup by $\mb S(\tau)$, and we use it to recast \eqref{Eq:AbsEvolPhi} in the integral form
 \begin{equation}\label{Eq:Duhamel1}
 	\Phi(\tau)=\mathbf{S}(\tau)\mathbf{U}(T)+\int_{0}^{\tau}\mathbf{S}(\tau-s)\mathbf{N}(\Phi(s))ds.
 \end{equation}
 Note that the asymptotic stability of $\Psi_0$ would follow from the global existence and decay of solutions to \eqref{Eq:Duhamel1} for small data. This, of course, necessitates decay of $\mb  S(\tau)$, and this is where the specific choice of $s,k$ starts to matter. The first condition we impose is $s,k > \frac{d}{2}$, as then the free semigroup $\mb S_0(\tau)$ obeys exponential decay in $\mc H^{s,k}$. Still, perturbing the generator $\mb L_0$ by $\mb L'$ in general induces growth for $\mb S(\tau)$. As it turns out, since $\mb L'$ is compact, the principle of linear stability holds in this setting. Namely, any exponential growth of $\mb S(\tau)$ is necessarily caused by the presence of the unstable point spectrum of $\mb L$. This follows from a general spectral mapping theorem that applies to this setting; see \cite{Glo22}, Theorem B.1.
   Consequently, in Section \ref{Sec:Spec_analy_L} we focus on the spectral analysis of the operator $\mb L$. We emphasize that, due to the highly non-self-adjoint character of $\mb L$, studying its spectrum is an extremely difficult problem. Fortunately, we are able to reduce this analysis to the one done in earlier works on stability in lightcones. In particular, by using the results from \cite{CosDonGlo17}, we show that $\mb L$ has precisely one unstable spectral point, $\la=1$, which is a simple eigenvalue; see Proposition \ref{Prop:Pert_semigroup}.
 This eigenvalue is, however, given rise to by the underlying time translation symmetry, and is hence not a genuine instability. We therefore introduce the Riesz projection $\mb P$ associated to it, and then from \cite{Glo22}, Theorem B.1 we infer the exponential decay of $\mb S(\tau)$ on the stable subspace
 \begin{equation*}\label{Eq:Decay}
 	\|\mb S(\tau)(\mb I  - \mb P) \|_{\mc H^{s,k}} \leq C e^{-\omega \tau}, \quad \omega >0;
 \end{equation*}
see Proposition \ref{Prop:Decay_stable}. With the above decay property at hand, to construct strong solutions to \eqref{Eq:Duhamel1}, it is enough to ensure Lipschitz continuity of $\mb N$ in $\mc H^{s,k}$. This can, in fact, be ensured  with additional restrictions on $s,k$.  In Section \ref{Sec:Est_nonlin}, we show that for all $s$ close to the critical exponent $s_c=\frac{d}{2}$, and for all $k$ integer and large enough, the operator $\mb N$ is locally Lipschitz continuous on $\mc H^{s,k}$. The proof relies on a new Schauder estimate, which we establish in Proposition \ref{Prop:Schauder}. The proof of this estimate, in turn, relies crucially on, to our knowledge, a new inequality for the weighted $L^\infty$-norms of derivatives of radial Sobolev functions; see Proposition \ref{Prop:Strauss}.

With these technical results at hand, in Section \ref{Sec:Strong_sol} we construct for \eqref{Eq:Duhamel1} global, exponentially decaying strong $\mc H^{s,k}$-solutions for small data. To suppress the growth coming from the symmetry-induced eigenvalue $\la=1$, we use a standard Lyapunov-Perron type argument, by means of which we also extract out the blowup time $T$. 
In Section \ref{Sec:Upgrade_to_class}, by regularity arguments we upgrade the constructed strong solutions to the classical ones. By translating the obtained result back to physical coordinates, we get stability of $u_T$. Finally, by means of the equivalence of norms of corotational maps and their radial profiles, from this we derive Theorem \ref{Thm:Main}.
  
 \subsection{Notation and conventions}
 
  By $\mc S(\R^d)$ we denote the  Schwartz space. We also allow for vector functions in this space by requiring that every component be a standard scalar Schwartz function.
 For a Banach space $X$, we denote by $\mc B(X)$ the set of bounded linear operators on $X$. Given a closed linear operator $(L,\mc D(L))$ on $X$,  we denote by $\rho( L)$ the resolvent set of $L$, while $\sigma( L):= \mathbb{C} \setminus \rho( L)$ stands for the spectrum of $ L$, and $\sigma_p( L)$ denotes the point spectrum. Also, for $\la \in \rho( L)$ we use the following convention for the resolvent operator, $R_{ L}(\la):=(\la I -  L)^{-1}$. By $\ker L$ and $\rg L$ we denote respectively the kernel and the range of $L$. Also, we use the usual asymptotic notation $a \lesssim b$ to denote $a \leq Cb$ for some constant $C>0$. Furthermore, we write $a \simeq b$ if $a \lesssim b$ and $b \lesssim a.$ We also use the Japanese bracket notation $\inner{x}= \sqrt{1+|x|^2}$. For the Wronskian of two functions $f,g \in C^1(I), I \subseteq \R$, we use the following convention $W(f,g):=fg'-f'g$.
 
 \section{Equation in similarity variables}\label{Sec:Simil_var}

 \noindent In this section we introduce the similarity variables, which are adapted to the self-similar nature of the solution $u_T$ in \eqref{Def:BB_sol}, and thereby convert the problem of stability of finite time blowup via $u_T$ into the problem of the asymptotic stability of a static profile. For convenience, we introduce some notation. First, we denote $n:=d+2$. Also, we define
 \begin{equation}\label{Def:u}
 	v(t,x):=u(t,|x|),
 \end{equation}
and thereby rewrite the Cauchy problem \eqref{Eq:CorWM} in the following form
 \begin{equation}\label{Eq:NLW}
 	\begin{cases}
 	\displaystyle{	~~\partial^2_t v - \Delta v = \frac{n-3}{2|x|^3}}\left(2|x|v - \sin (2|x|v)\right),\\
 		 \smallskip
 		~~v(0,\cdot)=v_0, \\
 		~~\partial_t v(0,\cdot)=v_1,
 	\end{cases}
 \end{equation}
where $(t,x) \in I \times \R^{n}$, $I \subseteq \R$ is an interval containing zero, and the radial initial data is $v_0=u_0(|\cdot|)$ and $v_1=u_1(|\cdot|)$.
 
\subsection{Similarity variables}
 Given $T>0$, we define
 \begin{equation}\label{Def:Simil_var}
 	\tau= \tau(t):=\ln \left(\frac{T}{T-t} \right),  \quad \xi=\xi(t,x):=\frac{x}{T-t}.
 \end{equation}
Note that transformation \eqref{Def:Simil_var} maps the strip $S_T:=[0,T) \times \R^n$ 
into the upper half-space $H_+:=[0,\infty) \times \R^n$. In addition, we define the rescaled dependent variable
\begin{equation}\label{Def:psi}
	\psi(\tau,\xi):=(T-t)v(t,x)=Te^{-\tau}v(T-Te^{-\tau},Te^{-\tau}\xi).
\end{equation}
Consequently, the evolution of $v$ inside $S_T$ corresponds to the evolution of $\psi$ inside $H_+$. Note that derivative operators with respect to $t$ and $x$ in the new variables become
\begin{equation}\label{Eq:Diff_law}
	\partial_t = \frac{e^\tau}{T}(\partial_{\tau} + \Lambda), \quad \text{and} \quad     \partial_{x_i}= \frac{e^{\tau}}{T}\partial_{\xi_i},
\end{equation}
where the operator $\Lambda$ acts on $\xi$ and is defined in \eqref{Eq:Cor_initial_data}.  Based on \eqref{Eq:Diff_law},
we get that the semilinear wave equation \eqref{Eq:NLW} transforms into
\begin{equation}\label{Eq:NLW_sim_var}
	\big(\partial^2_\tau + 3 \partial_\tau + 2 \Lambda \partial_\tau - \Delta + \Lambda^2 + 3 \Lambda + 2 \big)\psi = \frac{n-3}{2|\xi|^3}\big(2|\xi|\psi - \sin (2|\xi|\psi) \big).
\end{equation}
The linear operator on the left is fairly complicated compared to d'Alembertian in \eqref{Eq:NLW}, and it seems rather hopeless to try and employ the traditional methods of constructing solutions, e.g., the Fourier synthesis. We therefore resort to the abstract approach, via the semigroup theory. To that end, we first write \eqref{Eq:NLW_sim_var} in a vector form. Namely, we define
\begin{equation}\label{Def:psi_1}
	\psi_1(\tau,\xi):=\psi(\tau,\xi), \quad \psi_2(\tau,\xi):=(\partial_\tau + \Lambda + 1)\psi(\tau,\xi),
\end{equation} 
and let $\Psi(\tau):=(\psi_1(\tau,\cdot),\psi_2(\tau,\cdot))$. This yields an evolution equation for $\Psi$
\begin{equation}\label{Eq:Evol_equ}
	\partial_{\tau}\Psi(\tau) = \widetilde{\bm L}_0 \Psi(\tau) + \bm N_0(\Psi(\tau)),
\end{equation}
where
\begin{equation}\label{Def:Wave_oper_sim}
	\widetilde{\bf{L}}_0:= 
	\begin{pmatrix}
	-\Lambda -1 &1\\
	\Delta & -\Lambda - 2
\end{pmatrix}
\end{equation}
is the wave operator in similarity variables, and the nonlinearity is given by
\begin{equation*}
	\mb N_0(\mb u) = 
	\begin{pmatrix}
		0 \vspace{1mm} \\ N_0(\cdot,u_1)
	\end{pmatrix}
	\quad \text{for} \quad  N_0(\xi,u_1)=\frac{n-3}{2|\xi|^3}\eta(|\xi|u_1),
\end{equation*}
where, for convenience, we denoted
\begin{equation}\label{Def:Eta}
	\eta(y):= 2y-\sin(2y).
\end{equation}
Also, the initial data become
\begin{equation*}
	\bm U_0(T):= 
	\begin{pmatrix}
		Tv_0(T\cdot) \\
		T^2 v_1(T\cdot)
	\end{pmatrix}.
\end{equation*}
 Now, since \eqref{Def:BB_sol} solves \eqref{Eq:CorWM}, we have that for all $n \geq 5$ the function
\begin{equation*}
	\Psi_0:=
	\begin{pmatrix}
		\phi_0 \\
		\phi_1
	\end{pmatrix}, \quad \text{where} \quad \phi_0=\phi(|\cdot|) \quad \text{and} \quad \phi_1= \phi_0 + \Lambda \phi_0,
\end{equation*}
 is a static solution to \eqref{Eq:Evol_equ}. To study evolutions of initial data near $\Psi_0$ we consider the perturbation ansatz
\begin{equation*}
	\Psi(\tau) = \Psi_0 + \Phi(\tau).
\end{equation*} 
This leads to the central evolution equation of the paper
\begin{equation}\label{Eq:Vector_pert}
	\partial_\tau \Phi(\tau) = \big[\widetilde{\bm L}_0 + \mb L' \big] \Phi(\tau) + \bm N(\Phi(\tau)), 
\end{equation}
where
\begin{equation}\label{Def:L'}
	\mb L'\mb u = 
	\begin{pmatrix}
		0 \vspace{1mm} \\ Vu_1
	\end{pmatrix}
	\quad \text{for} \quad  V(\xi)=\frac{n-3}{2|\xi|^2}\eta'(|\xi|\phi_0)=\frac{8(n-4)(n-3)}{(|\xi|^2+n-4)^2},
\end{equation}
and
\begin{equation}\label{Def:N}
	\mb N(\mb u) = 
	\begin{pmatrix}
		0 \vspace{1mm} \\ N(\cdot,u_1)
	\end{pmatrix}
	\quad \text{for} \quad  N(\xi,u_1)=\frac{n-3}{2|\xi|^3}\Big(\eta\big(|\xi|(\phi_0+u_1)\big)- \eta(|\xi|\phi_0)-\eta'(|\xi|\phi_0)|\xi|u_1 \Big).
\end{equation}
Furthermore, the initial data are now
\begin{equation}\label{Eq:SS_initial_data}
	\Phi(0) = \Psi(0) - \Psi_{0} =
	\begin{pmatrix}
		Tv_0(T\cdot) - \phi_0 \\
		T^2 v_1(T\cdot) - \phi_1
	\end{pmatrix}
	=
	\begin{pmatrix}
		Tv_0(T\cdot) - v_0 \\
		T^2 v_1(T\cdot) - v_1
	\end{pmatrix}
	+
	\mb v := \mb U (\mb v,T),
\end{equation}
where, for convenience, we denoted
\begin{equation}\label{Def:InitCond_v}
	\mb v := 
	\begin{pmatrix}
		v_0 - \phi_0 \\
		v_1 - \phi_1
	\end{pmatrix}.
\end{equation}
To study the Cauchy problem \eqref{Eq:Vector_pert}-\eqref{Eq:SS_initial_data} we need a convenient functional setup. For this, we pass to the following section.

\section{Functional setup}\label{Sec:Funct_setup}
 
\noindent For $u,v \in C^{\infty}_{c}(\R^n)$ and $s>-n/2$ we define the inner product
\begin{equation*}
	\langle u,v \rangle_{\dot{H}^s(\R^n)}:=  \inner{|\cdot|^s \mc Fu,|\cdot|^s\mc Fv}_{L^2(\R^n)},
\end{equation*}
where we use the following convention for the Fourier transform 
\begin{equation*}
	\mc Fu(\xi) := \frac{1}{(2 \pi)^{n/2}}\int_{\R^n} u(x)e^{-i \xi \cdot x}dx.
\end{equation*}
Consequently, we have the homogeneous Sobolev norm on $C^{\infty}_{c}(\R^n)$ 
\begin{equation*}
	\norm{u}^2_{\hsob{s}}:= \inner{u,u}_{\hsob{s}}.
\end{equation*}
When working with integer values of the Sobolev exponent $s$, we will frequently use an equivalent norm defined via derivatives. Namely, we will make use of the fact that given $k \in \mathbb{N}_0$
\begin{equation}\label{Eq:Norms_equiv}
	\norm{u}_{\hsob{k}} \simeq \sum_{|\alpha|=k}\norm{\partial^\alpha u}_{L^2(\R^n)} 
\end{equation}
for all $u \in C^\infty_{c}(\R^n)$. For notational convenience, we also introduce the following inner product on $C^\infty_{c}(\R^n)$
\begin{equation*}
	\inner{u,v}_{\dot{H}^{s_1}\cap\hsob{s_2}} := \inner{u,v}_{\hsob{s_1}} + \inner{u,v}_{\hsob{s_2}},
\end{equation*}
which induces the norm $ \norm{u}_{\dot{H}^{s_1}\cap\hsob{s_2}}$.  
To introduce the central space of our analysis, we define one more inner product. For $\mb u := (u_1,u_2)$ and $\mb v := (v_1,v_2)$, both of which belong to $C^\infty_{c}(\R^n) \times C^\infty_{c}(\R^n)$, we let
\begin{equation}\label{Def:Inner_prod_H}
	\inner{\mb u,\mb v}_{\mc H^{s_1,s_2}} := \inner{u_1,v_1}_{\dot{H}^{s_1}\cap\hsob{s_2}} + \inner{u_2,v_2}_{\dot{H}^{s_1-1}\cap\hsob{s_2-1}}.
\end{equation}
Consequently, for $s_1,s_2 > -\frac{n}{2}+1$, we define the space $\mc H^{s_1,s_2}$ as the completion of the radial test space $C^{\infty}_{c,\text{rad}}(\R^n) \times C^{\infty}_{c,\text{rad}}(\R^n)$ under the norm defined by the inner product \eqref{Def:Inner_prod_H}. 
For simplicity, we do not explicitly indicate the dependence of $\mc H^{s_1,s_2}$ on the underlying spatial dimension, as we always assume it is denoted by $n$.
Finally, we remark that $\mc H^{s_1,s_2}$ is, in effect, the Cartesian product of the intersection radial Sobolev spaces
\begin{equation*}
	\mc H^{s_1,s_2}= \dot{H}_{\text{rad}}^{s_1} \cap \dot{H}_{\text{rad}}^{s_2}(\R^n) \, \times \, \dot{H}_{\text{rad}}^{s_1-1} \cap \dot{H}_{\text{rad}}^{s_2-1}(\R^n) .
\end{equation*}
%\pagebreak

\section{Existence of the free semigroup on $\mc H^{s_1,s_2}$}\label{Sec:Free_semigroup}

\noindent In order to construct in $\mc H^{s_1,s_2}$ solutions to equation \eqref{Eq:Vector_pert}, we recast it first into an integral form. This necessitates first solving the linear version, and to accomplish this we resort to semigroup theory. In this section we concentrate on the free operator $\widetilde{\mb L}_0$, and show that it generates a semigroup on $\mc H^{s_1,s_2}$. First, we supply $\widetilde{\mb L}_0$ with a domain
\begin{equation*}
	\mc D(\widetilde{\mb L}_0) := C^{\infty}_{c,\text{rad}}(\R^n) \times C^{\infty}_{c,\text{rad}}(\R^n).
\end{equation*}
This whole section is devoted to proving the following fundamental result.
\begin{proposition}\label{Prop:Free_semigroup}
	Suppose that 
	\begin{equation}\label{Eq:Conds_free_semigroup}
		 n \geq 3, \quad 1 < s_1 < \tfrac{n}{2} \quad \text{and} \quad s_2 > \tfrac{n}{2} +1.
	\end{equation}
	Then the operator $\widetilde{\bf L}_0 : \mc D(\widetilde{\bf L}_0) \subseteq \mc H^{s_1,s_2} \rightarrow \mc H^{s_1,s_2}$ is closable, and its closure $(\bm L_0, \mc D(\bm L_0))$ generates a strongly continuous semigroup $({\bf S}_0(\tau))_{\tau\geq 0}$ of bounded operators on $\mc H^{s_1,s_2}$. Furthermore, the semigroup obeys the following growth estimate
	\begin{equation}\label{Eq:S_0_decay}
		\norm{{\bf S}_0(\tau){\bf u}}_{\mc H^{s_1,s_2}} \leq e^{(\frac{n}{2}-1-s_1)\tau} \norm{{\bf u}}_{\mc H^{s_1,s_2}}
	\end{equation}
for ${\bf u} \in \mc H^{s_1,s_2}$ and $\tau \geq 0$.
\end{proposition}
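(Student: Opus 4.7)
The strategy is to apply the Lumer--Phillips generation theorem to $\widetilde{\mb L}_0 - \omega I$ with $\omega := \tfrac{n}{2} - 1 - s_1$. Since $\mc D(\widetilde{\mb L}_0) = \core \times \core$ is dense in $\mc H_{s_1,s_2}$ by construction, two ingredients suffice: dissipativity of $\widetilde{\mb L}_0 - \omega I$ on this core, and density of the range of $\lambda I - \widetilde{\mb L}_0$ for at least one real $\lambda > \omega$. Closability of $\widetilde{\mb L}_0$ then follows automatically from dissipativity, and the theorem yields that the closure generates a $C_0$-semigroup of type $\omega$, which is precisely \eqref{Eq:S_0_decay}.

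For dissipativity, I would compute $\Re\langle \widetilde{\mb L}_0 \mb u, \mb u\rangle_{\mc H_{s_1,s_2}}$ for $\mb u = (u_1, u_2) \in \mc D(\widetilde{\mb L}_0)$ on the Fourier side, treating each exponent $s \in \{s_1, s_2\}$ separately. The cross contributions
\begin{equation*}
\langle u_2, u_1\rangle_{\hsob{s}} + \langle \Delta u_1, u_2\rangle_{\hsob{s-1}}
\end{equation*}
combine, via $\mc F(\Delta u_1)(\xi) = -|\xi|^2 \mc F u_1(\xi)$, into $\langle u_2, u_1\rangle_{\hsob{s}} - \langle u_1, u_2\rangle_{\hsob{s}}$, which is purely imaginary. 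The surviving diagonal terms require $\Re\langle \Lambda v, v\rangle_{\hsob{s}}$; using the Fourier identity $\mc F(\Lambda v)(\xi) = -(n + \xi \cdot \nabla_\xi)\mc F v(\xi)$, the homogeneity $\xi \cdot \nabla_\xi |\xi|^s = s |\xi|^s$, and a single integration by parts in $\xi$, one obtains $\Re\langle \Lambda v, v\rangle_{\hsob{s}} = (s - \tfrac{n}{2})\|v\|^2_{\hsob{s}}$. Assembling the pieces,
\begin{equation*}
\Re\langle \widetilde{\mb L}_0 \mb u, \mb u\rangle_{\mc H_{s_1,s_2}} = \sum_{s \in \{s_1, s_2\}} \left(\tfrac{n}{2} - 1 - s\right)\left(\|u_1\|^2_{\hsob{s}} + \|u_2\|^2_{\hsob{s-1}}\right),
\end{equation*}
which, since $s_1 < s_2$, is bounded above by $\omega \|\mb u\|^2_{\mc H_{s_1,s_2}}$.

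For the range condition, given radial Schwartz data $\mb f = (f_1, f_2)$ (a dense subset of $\mc H_{s_1,s_2}$), I would solve $(\lambda I - \widetilde{\mb L}_0)\mb u = \mb f$ by algebraically eliminating $u_2 = (\lambda + 1 + \Lambda) u_1 - f_1$ from the first row and substituting into the second. Passing to the radial profile $w$ defined by $u_1(x) = w(|x|)$, this reduces to a second-order linear ODE with singular points at $r = 0$, $r = 1$ (the self-similar lightcone), and $r = \infty$. The task is to construct a radial solution that is smooth at the origin, regular across $r = 1$, and decays at infinity fast enough to lie in $\hsob{s_1} \cap \hsob{s_2}$; the partner $u_2$ is then recovered algebraically. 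This is exactly the content of Lemma \ref{Lem:Closed_range}.

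The main obstacle is this ODE analysis. It amounts to a nontrivial three-point connection problem: one must simultaneously match the admissible one-dimensional local solution subspaces at $r = 0$, $r = 1$, and $r = \infty$. The dissipativity estimate already guarantees that $\lambda I - \widetilde{\mb L}_0$ is injective with closed range for every $\lambda > \omega$, so the genuine issue is to exhibit at least one such $\lambda$ for which the connection problem admits a solution and the resulting global function has decay at infinity compatible with the anisotropic intersection space $\hsob{s_1} \cap \hsob{s_2}$, uniformly in $(s_1, s_2, n)$ within the range \eqref{Eq:Conds_free_semigroup}.
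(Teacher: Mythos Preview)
Your proposal is correct and follows essentially the same route as the paper: Lumer--Phillips applied to $\widetilde{\mb L}_0-\omega I$, the Fourier-side computation of $\Re\langle\Lambda v,v\rangle_{\hsob{s}}=(s-\tfrac{n}{2})\|v\|_{\hsob{s}}^2$ for the dissipativity (your cancellation of the cross terms is exactly what happens), and the reduction of the range condition to the radial ODE handled in Lemma~\ref{Lem:Closed_range}. One detail worth noting for when you actually carry out the ODE step: the paper fixes $\lambda=\tfrac{n}{2}-1$ and, rather than solving a full three-point connection problem, uses the dissipativity bound itself to rule out coincidence of the Frobenius solutions at $r=0$ and $r=1$ (a putative coincidence would produce a smooth, decaying eigenfunction at eigenvalue $\tfrac{n}{2}-1>\omega$, contradicting \eqref{Eq:LM_est_closed}), which short-circuits the matching you anticipate as the main obstacle.
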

 To establish this result, we use the Lumer-Phillips theorem. This requires dissipativity of (a shift of) $\bm L_0$ and that $\la \bm I - \bm L_0 $ is onto for some, large enough, $\la \in \R.$  We show these properties in a series of auxiliary results.
\begin{lemma}\label{Lem:LM_estimate}
	 Let $n \in \mathbb{N}$ and $s_2\geq s_1 > -\frac{n}{2}+1$. Then	
\begin{equation}\label{Eq:LM_est}
	\Re \, \inner{\widetilde{\bf L}_0 {\bf u},{\bf u}}_{\mc H^{s_1,s_2}} \leq \left(\frac{n}{2}-1-s_1 \right)\inner{\bm u,\bm u}_{\mc H^{s_1,s_2}}
\end{equation}
for ${\bf u} \in C^{\infty}_{c,\emph{rad}}(\R^n) \times C^\infty_{c,\emph{rad}}(\R^n)$.
\end{lemma}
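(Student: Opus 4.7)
The structure is to compute $\Re\langle \widetilde{\mb L}_0 \mb u, \mb u\rangle_{\mc H_{s_1,s_2}}$ exactly, exploiting that the inner product on $\mc H_{s_1,s_2}$ decomposes as a sum of two homogeneous Sobolev inner products, one at level $s_1$ and one at $s_2$. The plan is to first reduce \eqref{Eq:LM_est} to the per-level identity
\[
\Re\bigl[\langle -\Lambda u_1 - u_1 + u_2, u_1\rangle_{\hsob{s}} + \langle \Delta u_1 - \Lambda u_2 - 2 u_2, u_2\rangle_{\hsob{s-1}}\bigr] = \bigl(\tfrac{n}{2}-1-s\bigr)\bigl(\|u_1\|^2_{\hsob{s}} + \|u_2\|^2_{\hsob{s-1}}\bigr),
\]
valid for every $s>-\tfrac{n}{2}+1$. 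Summing this equality at $s=s_1$ and $s=s_2$, and then using $s_2\ge s_1$ to majorize $\tfrac{n}{2}-1-s_2$ by $\tfrac{n}{2}-1-s_1$ on the resulting sum of nonnegative quantities, will deliver \eqref{Eq:LM_est}.

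Two cancellations drive the per-level identity. First, the off-diagonal coupling vanishes in real part: since $\widehat{\Delta u_1}(\xi)=-|\xi|^2 \hat u_1(\xi)$, the Fourier definition of the Sobolev inner products gives $\langle \Delta u_1, u_2\rangle_{\hsob{s-1}}=-\langle u_1, u_2\rangle_{\hsob{s}}=-\overline{\langle u_2, u_1\rangle_{\hsob{s}}}$, so $\langle u_2, u_1\rangle_{\hsob{s}}+\langle \Delta u_1, u_2\rangle_{\hsob{s-1}}$ is purely imaginary. Second, for each diagonal block I need the identity $\Re\langle \Lambda u, u\rangle_{\hsob{s}}=(s-\tfrac{n}{2})\|u\|^2_{\hsob{s}}$. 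I would obtain this by transferring $\Lambda$ to the Fourier side via $\widehat{\Lambda u}=-n\,\hat u-\Lambda\hat u$ (which follows from $\mc F(x_j\partial_{x_j} u)=-\hat u-\xi_j\partial_{\xi_j}\hat u$), then applying $\Re[(\Lambda\hat u)\overline{\hat u}]=\tfrac{1}{2}\Lambda(|\hat u|^2)$ and integrating by parts against $|\xi|^{2s}$, using the divergence identity $\nabla\cdot(|\xi|^{2s}\xi)=(n+2s)|\xi|^{2s}$ to pin down the constant and sign.

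Assembling the pieces at level $s$, the $-\Lambda$ contributions yield $(\tfrac{n}{2}-s)\|u_1\|^2_{\hsob{s}}$ and $(\tfrac{n}{2}-s+1)\|u_2\|^2_{\hsob{s-1}}$, from which the zeroth-order terms $-u_1$ and $-2u_2$ subtract $\|u_1\|^2_{\hsob{s}}$ and $2\|u_2\|^2_{\hsob{s-1}}$. A pleasant cancellation between the unit shift coming from working at level $s-1$ for $u_2$ and the asymmetry $-1$ versus $-2$ in the diagonal entries of $\widetilde{\mb L}_0$ makes both remaining coefficients coincide at $\tfrac{n}{2}-1-s$, producing the claimed per-level identity. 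The computation is not deep; the only point requiring care is justifying the integration by parts in Fourier variables, which is legitimate because $\mb u \in \core\times\core$ renders $\hat{\mb u}$ Schwartz, and the hypothesis $s>-\tfrac{n}{2}+1$ ensures that $|\xi|^{2(s-1)}$ is locally integrable at the origin so no boundary contribution arises. The radiality of $\mb u$, while essential in later sections, plays no role here.
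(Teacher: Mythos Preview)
Your proof is correct and follows essentially the same approach as the paper: both compute $\Re\langle \Lambda u,u\rangle_{\hsob{s}}=(s-\tfrac{n}{2})\|u\|^2_{\hsob{s}}$ via the Fourier identity $\widehat{\Lambda u}=-n\hat u-\Lambda\hat u$ and integration by parts, and both use $\langle \Delta u_1,u_2\rangle_{\hsob{s-1}}=-\langle u_1,u_2\rangle_{\hsob{s}}$ for the cross terms. Your write-up is in fact more explicit than the paper's, spelling out the off-diagonal cancellation in real part and the final majorization $\tfrac{n}{2}-1-s_2\le \tfrac{n}{2}-1-s_1$ that the paper leaves implicit in its closing sentence.
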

\begin{proof}
	First, given $u \in \core$ we note that $\mc F(\Lambda u) = -n \mc Fu-\Lambda \mc Fu$. Accordingly, for $s>-\frac{n}{2}$ we have that
	\begin{align}
		\inner{\Lambda u,u}_{\hsob{s}} &= \inner{\mc |\cdot|^s \mc F(\Lambda u),|\cdot|^s \mc Fu }_{L^2(\R^n)} \nonumber \\
		&=-n \inner{\mc |\cdot|^s \mc Fu,|\cdot|^s \mc Fu }_{L^2(\R^n)} - \inner{ \Lambda \mc Fu,|\cdot|^{2s} \mc Fu }_{L^2(\R^n)}. \label{Eq:Lumer_Lambda}
	\end{align}
For the second term above, by partial integration we have that
\begin{equation*}
	\inner{ \Lambda \mc Fu,|\cdot|^{2s} \mc Fu }_{L^2(\R^n)} = -n \inner{ \mc Fu,|\cdot|^{2s} \mc Fu }_{L^2(\R^n)} - \inner{ \mc Fu,\Lambda(|\cdot|^{2s} \mc Fu) }_{L^2(\R^n)}, 
\end{equation*}
wherefrom we get
\begin{equation*}
	\Re \, \inner{ \Lambda \mc Fu,|\cdot|^{2s} \mc Fu }_{L^2(\R^n)} =-\left(\frac{n}{2} + s\right)\inner{\mc |\cdot|^s \mc Fu,|\cdot|^s \mc Fu }_{L^2(\R^n)}.
\end{equation*} 
Now, this, according to \eqref{Eq:Lumer_Lambda}, yields
\begin{equation*}
	\Re \, \inner{\Lambda u,u}_{\hsob{s}} = \left( s-\frac{n}{2}\right)\inner{u,u}_{\hsob{s}}.
\end{equation*}
Finally, from this identity and the fact that $\inner{\Delta u,u}_{\hsob{s}} = -\inner{u,u}_{\hsob{s+1}}$, the relation \eqref{Eq:LM_est} follows.
\end{proof}

\begin{corollary}
Let $n \in \mathbb{N}$ and $s_2\geq s_1 > -\frac{n}{2}+1$. Then the operator $\widetilde{\bf L}_0 : \mc D(\widetilde{\bf L}_0) \subseteq \mc H^{s_1,s_2} \rightarrow \mc H^{s_1,s_2}$ is closable, and its  closure $(\bm L_0, \mc D(\bm L_0))$ satisfies
\begin{equation}\label{Eq:LM_est_closed}
	\Re \, \inner{\bm L_0 {\bf u},{\bf u}}_{\mc H^{s_1,s_2}} \leq \left(\frac{n}{2}-1-s_1 \right)\inner{\bm u,\bm u}_{\mc H^{s_1,s_2}}
\end{equation}
for all ${\bm u} \in \mc D(\bm L_0)$.
\end{corollary}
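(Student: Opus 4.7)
\emph{Plan.} The approach is to combine Lemma~\ref{Lem:LM_estimate} with the standard abstract fact that a densely defined dissipative operator on a Hilbert space is closable, with its closure remaining dissipative subject to the same bound. Setting $\omega := \frac{n}{2}-1-s_1$, Lemma~\ref{Lem:LM_estimate} can be rewritten as
\[
	\Re \, \inner{(\widetilde{\bm L}_0 - \omega \bm I)\bm u,\bm u}_{\mc H_{s_1,s_2}} \leq 0, \qquad \bm u \in \mc D(\widetilde{\bm L}_0),
\]
so that $\widetilde{\bm L}_0 - \omega \bm I$ is dissipative on the dense domain $\mc D(\widetilde{\bm L}_0) = C^{\infty}_{c,\text{rad}}(\R^n) \times C^{\infty}_{c,\text{rad}}(\R^n)$, the density being built into the very definition of $\mc H_{s_1,s_2}$ as the completion of this test space.

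Next, I would invoke the classical semigroup-theoretic result (see, e.g., Engel--Nagel or Pazy) that a densely defined dissipative operator on a Hilbert space is automatically closable and that its closure remains dissipative. Applied to $\widetilde{\bm L}_0 - \omega \bm I$, this gives closability of $\widetilde{\bm L}_0$ itself, and yields that $\bm L_0 - \omega \bm I$ is dissipative, which is precisely the claim \eqref{Eq:LM_est_closed}. If one prefers to avoid quoting the abstract statement, a hands-on argument works just as well: given $\bm u \in \mc D(\bm L_0)$, by definition of the closure one can select a sequence $(\bm u_k) \subset \mc D(\widetilde{\bm L}_0)$ with $\bm u_k \to \bm u$ and $\widetilde{\bm L}_0 \bm u_k \to \bm L_0 \bm u$ in $\mc H_{s_1,s_2}$; passing to the limit in \eqref{Eq:LM_est} via continuity of the inner product then gives \eqref{Eq:LM_est_closed}. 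Closability itself can be obtained in the same vein from the equivalent characterization $\norm{(\la \bm I - (\widetilde{\bm L}_0 - \omega \bm I))\bm u}_{\mc H_{s_1,s_2}} \geq \la \norm{\bm u}_{\mc H_{s_1,s_2}}$ for $\la >0$.

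There is essentially no obstacle here: all of the nontrivial content has already been absorbed into Lemma~\ref{Lem:LM_estimate}, and the remaining step is standard semigroup folklore. The genuinely hard work of the section still lies ahead, namely verifying the range condition $\rg(\la \bm I - \bm L_0) = \mc H_{s_1,s_2}$ required for the Lumer--Phillips theorem, which will entail the delicate ODE construction of global radial decaying solutions referenced in the introduction (cf.\ Lemma~\ref{Lem:Closed_range}).
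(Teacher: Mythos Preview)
Your proposal is correct and follows essentially the same approach as the paper: rewrite Lemma~\ref{Lem:LM_estimate} as dissipativity of $\widetilde{\bm L}_0 - (\tfrac{n}{2}-1-s_1)\bm I$, invoke the standard fact (the paper cites \cite{EngNag00}, p.~82, Proposition 3.14(iv)) that a densely defined dissipative operator is closable, and then pass the estimate to the closure. Your added hands-on alternative via approximating sequences is a harmless elaboration of the same idea.
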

\begin{proof}
	Lemma \ref{Lem:LM_estimate} implies that $\widetilde{\bf{L}}_0-(\tfrac{n}{2}-1-s_1)\mb I : \mc D(\widetilde{\mb L}_0) \subseteq \mc H^{s_1,s_2} \rightarrow \mc H^{s_1,s_2}$ is dissipative. Since this operator is also densely defined, it (and therefore also $\widetilde{\mb L}_0$) is closable (see, e.g.,~\cite{EngNag00}, p.~82, Proposition 3.14, (iv)). Then \eqref{Eq:LM_est} and the closedness of $\bm L_0$ imply \eqref{Eq:LM_est_closed}.
\end{proof}
 For notational convenience, we do not explicitly indicate the dependence of $(\bm L_0,\mc D(\bm L_0))$ on $s_1,s_2$, as that will be clear from the context. 
Now we prove an important embedding property of $\mc H^{s_1,s_2}$.

\begin{lemma}\label{Lem:L_infty_embedding}
	Let
	\begin{equation}\label{Eq:Conds_for_L_infty_embedding}
		n,m \in \mathbb{N}, \quad  0 \leq \tilde{s}_1 \leq s_1 < \tfrac{n}{2}, \quad \text{and} \quad  \tilde{s}_2 \geq s_2 > \tfrac{n}{2}+m.
	\end{equation}
	 Then we have the continuous embeddings
	\begin{equation}\label{Eq:L_infty_embedding}
	\mc H^{\tilde{s}_1,\tilde{s}_2}	\hookrightarrow \mc H^{s_1,s_2} \hookrightarrow C^{m}_{\emph{rad}}(\R^n) \times C^{m-1}_{\emph{rad}}(\R^n),
	\end{equation}
where the norm on the rightmost space is the one inherited from $	W^{m,\infty}(\R^n) \times W^{m-1,\infty}(\R^n).$
\end{lemma}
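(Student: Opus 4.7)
The argument is a standard Sobolev-embedding computation carried out in the Fourier picture, tailored to the fact that $\mc H_{s_1,s_2}$ encodes both a low- and a high-frequency Sobolev norm. I will first handle the embedding on radial test functions and then extend by density.

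\textbf{First embedding.} For the inclusion $\mc H_{\tilde s_1,\tilde s_2}\hookrightarrow \mc H_{s_1,s_2}$ it suffices to show, for each component separately, that
$\dot H^{\tilde s_1}\cap \dot H^{\tilde s_2}(\R^n)\hookrightarrow \dot H^{s_1}\cap \dot H^{s_2}(\R^n)$ whenever $\tilde s_1\le s_1\le s_2\le \tilde s_2$. Splitting frequency space at $|\xi|=1$ and using that $|\xi|^{2s}\le |\xi|^{2\tilde s_1}$ on $\{|\xi|\le 1\}$ (since $s\ge\tilde s_1\ge 0$) while $|\xi|^{2s}\le |\xi|^{2\tilde s_2}$ on $\{|\xi|>1\}$ (since $s\le \tilde s_2$), one reads off $\|u\|_{\dot H^s}^2\le \|u\|_{\dot H^{\tilde s_1}}^2+\|u\|_{\dot H^{\tilde s_2}}^2$ for every $s\in[\tilde s_1,\tilde s_2]$. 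Applying this to $u_1$ and $u_2$ yields the first embedding with norm bound $1$.

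\textbf{Second embedding.} For $u\in\core$ and any multi-index $\alpha$, Fourier inversion gives $\partial^\alpha u=\mc F^{-1}((i\xi)^\alpha \mc Fu)$, hence
\begin{equation*}
\|\partial^\alpha u\|_{L^\infty(\R^n)}\le (2\pi)^{-n/2}\int_{\R^n}|\xi|^{|\alpha|}|\mc Fu(\xi)|\,d\xi.
\end{equation*}
Splitting at $|\xi|=1$ and applying Cauchy–Schwarz,
\begin{equation*}
\int_{|\xi|\le 1}|\xi|^{|\alpha|}|\mc Fu(\xi)|\,d\xi\le \Bigl(\int_{|\xi|\le 1}|\xi|^{2(|\alpha|-s_1)}d\xi\Bigr)^{1/2}\|u\|_{\dot H^{s_1}},
\end{equation*}
and the analogous bound by $\|u\|_{\dot H^{s_2}}$ holds for $|\xi|>1$ with $s_1$ replaced by $s_2$. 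The low-frequency integral converges iff $|\alpha|-s_1>-n/2$, which holds for all $|\alpha|\ge 0$ by $s_1<n/2$; the high-frequency one converges iff $|\alpha|-s_2<-n/2$, i.e.\ $|\alpha|<s_2-n/2$, which by $s_2>\tfrac{n}{2}+m$ is satisfied for every $|\alpha|\le m$. Hence $\|\partial^\alpha u\|_{L^\infty}\lesssim \|u\|_{\dot H^{s_1}\cap\dot H^{s_2}}$ uniformly in $|\alpha|\le m$. The same computation applied to the second component, shifted by one in Sobolev and differentiation order, uses the parameter relation $s_2-1>\tfrac{n}{2}+(m-1)$, which is the same hypothesis $s_2>\tfrac{n}{2}+m$.

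\textbf{Density and regularity.} The inequality obtained on the dense subspace $C^{\infty}_{c,\mathrm{rad}}(\R^n)\times C^{\infty}_{c,\mathrm{rad}}(\R^n)$ shows that every Cauchy sequence in $\mc H_{s_1,s_2}$ is Cauchy in $W^{m,\infty}(\R^n)\times W^{m-1,\infty}(\R^n)$; the uniform limits of the derivatives of radial smooth functions are continuous and radial, so the limit lies in $C^m_{\mathrm{rad}}\times C^{m-1}_{\mathrm{rad}}$, and by Riemann–Lebesgue the derivatives in fact decay at infinity. This identifies the abstract completion element with a classical function and delivers the continuous embedding with the claimed norm.

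\textbf{Main obstacle.} There is no serious analytical obstacle, as the argument is routine. The only point requiring some care is keeping track of the two endpoint integrability conditions $s_1<n/2$ (low frequency, all orders) and $s_2>n/2+m$ (high frequency, highest order) simultaneously for both slots of $\mc H_{s_1,s_2}$, which is precisely what the hypothesis \eqref{Eq:Conds_for_L_infty_embedding} encodes.
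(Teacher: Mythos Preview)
Your norm estimates are correct and coincide with the paper's: the frequency splitting at $|\xi|=1$ for the first embedding, and the Cauchy--Schwarz argument with the two integrability conditions $s_1<n/2$ and $s_2>n/2+m$ for the second.

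There is, however, a genuine gap. The spaces $\mc H_{s_1,s_2}$ are defined in Section~\ref{Sec:Funct_setup} as \emph{abstract completions} of $\core\times\core$, not as subspaces of some ambient function or distribution space. Your argument produces a bounded linear map from the completion into the target, but does not show that this map is \emph{injective}, and for maps out of abstract completions injectivity is not automatic. Concretely, for $\iota_2:\mc H_{s_1,s_2}\to C^m_{\text{rad}}\times C^{m-1}_{\text{rad}}$ you must show: if a sequence $(\bm u_j)\subset\core\times\core$ is Cauchy in $\mc H_{s_1,s_2}$ and converges to zero in $W^{m,\infty}\times W^{m-1,\infty}$, then it converges to zero in the $\mc H_{s_1,s_2}$ norm. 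The analogous statement is needed for $\iota_1$. Your sentence ``this identifies the abstract completion element with a classical function'' describes a well-defined assignment, not an identification.

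The paper closes this gap by a weak--strong convergence argument. For $\iota_2$, one pairs $|\cdot|^s\mc F u_{1,j}$ against $\varphi\in\core$ and uses $\|u_{1,j}\|_{L^\infty}\to 0$ together with $\mc F^{-1}(|\cdot|^s\varphi)\in L^1$ to obtain weak $L^2$-convergence to zero; since the sequence is also norm-Cauchy in $L^2$, the weak and strong limits agree, forcing $\|u_{1,j}\|_{\dot H^s}\to 0$. For $\iota_1$, one instead uses that convergence to zero in $\dot H^{s_1}$ implies weak $L^2$-convergence of $|\cdot|^{\tilde s_1}\mc F u_{1,j}$ (via the pairing $\langle|\cdot|^{s_1}\mc F u_{1,j},|\cdot|^{\tilde s_1-s_1}\varphi\rangle$, which is where the hypothesis $\tilde s_1\le s_1<n/2$ enters to keep $|\cdot|^{\tilde s_1-s_1}\varphi\in L^2$). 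This is not a deep point, but it is the one place where the abstract-completion definition of $\mc H_{s_1,s_2}$ forces extra work, and it should be included.
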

\begin{proof}
Note that for any triple $a,b,c$ with the order   $a \leq b \leq c $, we have that $|x|^b \lesssim |x|^{a}+|x|^{c}$ for all $x \in \R^n$. Consequently, the estimate
%\begin{equation*}
$
	\norm{u}_{\hsob{b}} \lesssim \norm{u}_{\dot{H}^a \cap \hsob{c}} 
$
%\end{equation*}
 holds	for all $u \in \core$, and we thereby get that
\begin{equation*}
	\norm{\bm u}_{\mc H^{s_1,s_2}} \lesssim \norm{\bm u}_{\mc H^{\tilde{s}_1,\tilde{s}_2}}
\end{equation*}
for all $\bm u \in \core \times \core$. This inequality, in turn, implies that a Cauchy sequence in $\mc H^{\tilde{s}_1,\tilde{s}_2}$ is so in $\mc H^{s_1,s_2}$, yielding a continuous inclusion operator $\iota_1: \mc H^{\tilde{s}_1,\tilde{s}_2} \rightarrow \mc H^{s_1,s_2}$. For $\iota_1$ to be an embedding, we need to show its injectivity as well. To that end, assume $(\bm u_j)_{j \in \mathbb{N}} \subseteq \core \times \core$ is Cauchy in $\mc H^{\tilde{s}_1,\tilde{s}_2}$ and converges to zero in $\mc H^{s_1,s_2}$. Then, for $\varphi \in \core$ we have that
\begin{align*}
		|\inner{|\cdot|^{\tilde{s}_1}\mc Fu_{1,j},\varphi}_{L^2(\R^n)}| &= 
	|\inner{|\cdot|^{s_1}\mc Fu_{1,j}, |\cdot|^{\tilde{s}_1-s_1}\varphi}_{L^2(\R^n)}| \\ &\leq \norm{|\cdot|^{s_1}\mc Fu_{1,j}}_{L^2(\R^n)}\norm{|\cdot|^{\tilde{s}_1-s_1}\varphi}_{L^2(\R^n)} \rightarrow 0,
\end{align*}
as $j \rightarrow \infty$. So $(|\cdot|^{\tilde{s}_1}\mc Fu_{1,j})_{j \in \mathbb{N}}$ converges to zero weakly in $L^2(\R^n)$. Since it is also Cauchy in $L^2(\R^n)$, it is norm convergent to zero. We similarly treat the rest of the indices $\tilde{s}_2,\tilde{s}_1-1$ and $\tilde{s}_2-1$, and therefore conclude that $(\bm u_j)_{j \in \mathbb{N}} $ converges to zero in $\mc H^{\tilde{s}_1,\tilde{s}_2}$ as well, and injectivity follows.
Now we prove the other embedding in \eqref{Eq:L_infty_embedding}. Assume $s_1,s_2$ satisfy \eqref{Eq:Conds_for_L_infty_embedding}. Then for a multi-index $\alpha$ with $|\alpha|=k \leq m$ we have that
	\begin{align*}
		\norm{\partial^\alpha u}_{L^\infty(\R^n)} &\lesssim \norm{|\cdot|^k\mc Fu}_{L^1(\B^n)} + \norm{|\cdot|^k\mc Fu}_{L^1(\R^n\setminus \B^n)} \\
		& \lesssim \norm{|\cdot|^{-s_1}}_{L^2(\B^n)} \norm{|\cdot|^{s_1}\mc Ff}_{L^2(\B^n)} +
		\norm{|\cdot|^{-(s_2-k)}}_{L^2(\R^n\setminus \B^n)} \norm{|\cdot|^{s_2}\mc Ff}_{L^2(\R^n\setminus \B^n)} \\
		& \lesssim \norm{u}_{\hsob{s_1}} + \norm{u}_{\hsob{s_2}}
	\end{align*}	
	for all $u \in \core.$  From here, it follows that
	\begin{equation*}
		\norm{\bm u}_{W^{m,\infty}(\R^n)\times W^{m-1,\infty}(\R^n)} \lesssim \norm{\bm u}_{\mc H^{s_1,s_2}}
	\end{equation*}
for all $\bm u \in \core \times \core.$ Then, similarly to above, we get a continuous inclusion map $\iota_2 : \mc H^{s_1,s_2} \rightarrow C^{m}_{\text{rad}}(\R^n) \times C^{m-1}_{\text{rad}}(\R^n)$. To show injectivity of $\iota_2$, we let $(u_j)_{j  \in \mathbb{N}} \subseteq \core$ be a Cauchy sequence with respect to the norm $\norm{\cdot}_{\hsob{s}}$ for some $s \geq 0$, and furthermore assume that $\norm{u_j}_{L^\infty(\R^n)} \rightarrow 0$. Then the sequence $(|\cdot|^s\mc Fu_j)_{j \in \mathbb{N}}$ is Cauchy in $L^2(\R^n)$, and furthermore for $\varphi \in \core$ we have that
\begin{equation*}
	|\inner{|\cdot|^s\mc Fu_j,\varphi}_{L^2(\R^n)}| = 
	|\inner{u_j,\mc F^{-1}(|\cdot|^s\varphi)}_{L^2(\R^n)}| \leq \norm{u_j}_{L^\infty(\R^n)}\norm{\mc F^{-1}(|\cdot|^s \phi)}_{L^1(\R^n)} \rightarrow 0.
\end{equation*}
In other words, the sequence $(|\cdot|^s\mc Fu_j)_{j \in \mathbb{N}}$ converges to zero weakly in $L^2(\R^n)$. By the uniqueness of weak limits, we have norm convergence to zero in $L^2(\R^n)$, and consequently $\norm{u_j}_{\hsob{s}} \rightarrow 0$. From here, the desired injectivity follows.
\end{proof}

 According to Lemma \ref{Lem:L_infty_embedding}, if $s_1,s_2$ satisfy \eqref{Eq:Conds_for_L_infty_embedding}, then we interpret elements of $\mc H^{s_1,s_2}$ as functions belonging to $C^{m}_{\text{rad}}(\R^n) \times C^{m-1}_{\text{rad}}(\R^n)$. This interpretation allows us to state the following result.

\begin{lemma}\label{Lem:Decay_functions}
 Assume \eqref{Eq:Conds_free_semigroup} and	let $\bm u=(u_1,u_2) \in C^\infty_{\emph{rad}}(\R^n) \times C^\infty_{\emph{rad}}(\R^n)$ be such that for all multi-indices $\alpha \in \mathbb{N}_0^n$ for which $|\alpha| \leq s_2+1$, we have the following estimates
	\begin{equation}\label{Eq:Decay_assumpt}
		|\partial^{\alpha}u_1(x) | \lesssim \inner{x}^{s_1-\frac{n}{2}-|\alpha|-1} \quad \text{and} \quad |\partial^{\alpha}u_2(x) | \lesssim \inner{x}^{s_1-\frac{n}{2}-|\alpha|-2}
	\end{equation}
for all $x \in \R^n$. Then $\bm u \in \mc D(\bm L_0)$ and $\bm L_0 \bm u=\widetilde{\bm L}_0 \bm u$.
\end{lemma}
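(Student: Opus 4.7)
The plan is to exhibit $\bm u$ as a graph-norm limit of a sequence in $\mc D(\widetilde{\mb L}_0)$. Fix a radial cutoff $\chi \in C^{\infty}_{c,\text{rad}}(\R^n)$ with $\chi \equiv 1$ on $\B^n$ and support in $2\B^n$, set $\chi_j(x) := \chi(x/j)$, and consider $\bm u_j := (\chi_j u_1, \chi_j u_2) \in \mc D(\widetilde{\mb L}_0)$. The aim is to prove $\bm u_j \to \bm u$ and $\widetilde{\mb L}_0 \bm u_j \to \widetilde{\mb L}_0 \bm u$ in $\mc H_{s_1, s_2}$; combined with the closedness of $\bm L_0$ and the embedding of Lemma \ref{Lem:L_infty_embedding} (which identifies abstract completion limits with the underlying classical pointwise limits), this places $\bm u$ in $\mc D(\bm L_0)$ and yields $\bm L_0 \bm u = \widetilde{\mb L}_0 \bm u$.

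For the first convergence, I would handle integer Sobolev exponents $m$ via \eqref{Eq:Norms_equiv}. Expanding $\partial^\alpha[(1 - \chi_j) u_1]$ by Leibniz produces (i) the main piece $(1 - \chi_j)\partial^\alpha u_1$, supported on $\{|x| \geq j\}$ and controlled by $|\partial^\alpha u_1(x)| \lesssim \langle x \rangle^{s_1 - n/2 - |\alpha| - 1}$, whose $L^2$-norm squared scales like $j^{2s_1 - 2|\alpha| - 2}$; and (ii) commutator pieces with $\partial^\beta \chi_j$, $\beta \neq 0$, supported on $\{j \leq |x| \leq 2j\}$ and carrying $|\partial^\beta \chi_j| \lesssim j^{-|\beta|}$, of the same order. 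Both vanish as $j \to \infty$ provided $|\alpha| > s_1 - 1$; the analogous computation for $u_2$ needs only $|\alpha| > s_1 - 2$. When $s_1$ is not an integer, the fractional exponents $s_1$ and $s_1 - 1$ are reached by the interpolation inequality $\| f \|_{\dot H^s} \leq \| f \|_{\dot H^a}^\theta \| f \|_{\dot H^b}^{1 - \theta}$ for integers $a < s < b$, which follows directly from H\"older on the Fourier side.

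For the second convergence, the product rules $\Lambda(\chi_j u) = \chi_j \Lambda u + (\Lambda \chi_j) u$ and $\Delta(\chi_j u) = \chi_j \Delta u + 2 \nabla \chi_j \cdot \nabla u + (\Delta \chi_j) u$ furnish the decomposition
\[
\widetilde{\mb L}_0 \bm u_j \;=\; \chi_j \widetilde{\mb L}_0 \bm u + \bm R_j,
\]
where $\bm R_j$ collects every term involving at least one derivative of $\chi_j$. The first summand converges to $\widetilde{\mb L}_0 \bm u$ by rerunning the previous argument: the components of $\widetilde{\mb L}_0 \bm u$ inherit decay one order worse than those of $\bm u$, which is accommodated by the derivative bounds hypothesized up to $|\alpha| \leq s_2 + 1$. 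The commutator $\bm R_j$ is also supported on $\{j \leq |x| \leq 2j\}$ but carries an additional factor of $1/j$, so the same cutoff-annulus and interpolation estimates force $\|\bm R_j\|_{\mc H_{s_1, s_2}} \to 0$.

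The principal obstacle I foresee is the treatment of the noninteger Sobolev exponents, for which the convenient derivative characterization \eqref{Eq:Norms_equiv} is unavailable. The interpolation trick detours this, but it hinges on the assumption $s_1 > 1$ ensuring that the integer neighbors of both $s_1$ and $s_1 - 1$ comfortably meet the integrability thresholds $m > s_1 - 1$ and $m > s_1 - 2$ identified above; this is precisely where the lower bound on $s_1$ in \eqref{Eq:Conds_free_semigroup} enters the argument.
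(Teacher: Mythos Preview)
Your proof is correct and follows essentially the same strategy as the paper: cutoff approximation $\bm u_j = \chi_j \bm u$, reduction to integer Sobolev norms via the derivative characterization \eqref{Eq:Norms_equiv}, and identification of the limit through the $L^\infty$-embedding of Lemma~\ref{Lem:L_infty_embedding}. The only cosmetic difference is how noninteger exponents are handled---you interpolate between the integer neighbors of $s_1$ and $s_1-1$, whereas the paper simply invokes the embedding $\mc H_{\lfloor s_1\rfloor,\lceil s_2\rceil}\hookrightarrow\mc H_{s_1,s_2}$ (from $|x|^b\lesssim |x|^a+|x|^c$) to control all four exponents at once; both devices are elementary and do the same job.
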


\begin{proof}
 Let $\varphi:\R^n \rightarrow [0,1]$ be a smooth radial function such that $\varphi(x)=1$ for $|x|\leq 1$ and $\varphi(x)=0$ for $|x|\geq2$. Assume that $\bm u=(u_1,u_2)$ satisfies the assumptions of the lemma, and define the sequence 
 $$
 (\bm u_j)_{j\in \mathbb{N}}:=(u_{1,j},u_{2,j})_{j\in \mathbb{N}} \subseteq \core \times \core, \quad
		u_{1,j}=\varphi(\cdot/j)u_1, \quad u_{2,j}=\varphi(\cdot/j)u_2.
	$$
Our aim is to show that both $(\bm u_j)_{j\in \mathbb{N}}$ and $(\widetilde{\bm L}_0 \bm u_j)_{j \in \mathbb{N}}$ are Cauchy in $\mc H^{s_1,s_2}$. For this, we exploit the equivalence \eqref{Eq:Norms_equiv} to show that the sequences are Cauchy in $\mc H^{k_1,k_2}$, for $k_1=\lfloor s_1\rfloor$ and $k_2=\lceil s_2\rceil$. Then we just use the estimate
\begin{equation}\label{Eq:Embbeding_estimate}
		\norm{\bm u}_{\mc H^{s_1,s_2}} \lesssim \norm{\bm u}_{\mc H^{k_1,k_2}}
\end{equation}
to conclude that they are Cauchy in $\mc H^{s_1,s_2}$ as well. To start, we take $\alpha,\beta \in \mathbb{N}_0^n$ such that $k_1 \leq |\alpha|+|\beta| \leq k_2$, and write 
$$
\partial^\alpha [\varphi(\cdot/j)]\partial^\beta u_1 = \frac{\inner{\cdot}^{|\alpha|}}{j^{|\alpha|}}\partial^\alpha \varphi(\cdot/j) \, \inner{\cdot}^{-|\alpha|} \partial^\beta u_1 .
$$
 For simplicity, denote $\varphi_j:=\left(\inner{\cdot}/j\right)^{|\alpha|}\partial^\alpha \varphi(\cdot/j)$. Now, note that the sequence $(\varphi_j)_{j \in \mathbb{N}}$ is bounded in $L^\infty(\R^n)$, and the support of $\varphi_j-\varphi_i$ is contained in $\R^n\setminus \B^n_{\min\{i,j\}}$. This, together with the fact that, according to \eqref{Eq:Decay_assumpt}, $ \inner{\cdot}^{-|\alpha|} \partial^\beta u_1$ belongs to $L^2(\R^n)$, implies that $\partial^\alpha [\varphi(\cdot/j)]\partial^\beta u_1$ is Cauchy in $L^2(\R^n)$. From this we conclude that $(u_{1,j})_{j \in \mathbb{N}}$ is Cauchy with respect to $\norm{\cdot}_{\hsob{k}}$ for every integer $k$ with $k_1 \leq k \leq k_2$. 
  Similarly, we show that $(u_{2,j})_{j\in \mathbb{N}}$ is Cauchy with respect to the same norm for $k_1-1 \leq k \leq k_2-1$. This, then, according to \eqref{Eq:Embbeding_estimate}, implies that $(\bm u_j)_{j \in \mathbb{N}}$ is Cauchy in $\mc H^{s_1,s_2}$.  In addition to this, from the definition of $\bm u_j$ it directly follows that $\bm u_j \rightarrow \bm u $ in $L^\infty(\R^n) \times L^\infty(\R^n).$
 
 Similarly to above, we also show that
 $(\Lambda u_{1,j})_{j \in \mathbb{N}}$ is Cauchy with respect to $\norm{\cdot}_{\hsob{k}}$ for $k_1 \leq k \leq k_2$, and that $(\Delta u_{1,j})_{j \in \mathbb{N}}$ and $(\Lambda u_{2,j})_{j \in \mathbb{N}}$ are both Cauchy with respect to the same norm for $k_1-1 \leq k \leq k_2-1$. Based on this, we conclude that $(\widetilde{\mb L}_0 \bm u_j)_{j \in \mathbb{N}}$ is Cauchy in $\mc H^{s_1,s_2}$.
Furthermore, we have that $\widetilde{\mb L}_0\bm  u_j \rightarrow \widetilde{\mb L}_0\bm u $ in $L^\infty(\R^n) \times L^\infty(\R^n).$
Finally, the lemma follows from the embedding \eqref{Eq:L_infty_embedding} and the fact that $\bm L_0$ is the closure of $\widetilde{\mb L}_0$.
\end{proof}

 The last missing piece for the invocation of the Lumer-Phillips theorem is the property of $\bm L_0$ that for one (and therefore all) large enough $\la \in \R$, the operator $\la \bm I - \bm L_0$ is onto.  

\begin{lemma}\label{Lem:Closed_range}
	Assume \eqref{Eq:Conds_free_semigroup}. Then $\rg  \big((\frac{n}{2}-1) \bm I - \bm L_0 \big) = \mc H^{s_1,s_2}$.
\end{lemma}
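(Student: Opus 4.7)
The strategy is to invoke the Lumer-Phillips theorem via closed range plus density. Since $s_1 > 1$, the dissipativity estimate \eqref{Eq:LM_est_closed} gives
\[
\|((\tfrac{n}{2}-1)\mathbf{I}-\mathbf{L}_0)\mathbf{u}\|_{\mathcal{H}_{s_1,s_2}}\geq s_1\|\mathbf{u}\|_{\mathcal{H}_{s_1,s_2}}\quad\text{for all }\mathbf{u}\in\mathcal{D}(\mathbf{L}_0),
\]
so $(\tfrac{n}{2}-1)\mathbf{I}-\mathbf{L}_0$ has closed range. It therefore suffices to solve the resolvent equation for $\mathbf{f}$ in the dense subspace $C^{\infty}_{c,\text{rad}}(\mathbb{R}^n)\times C^{\infty}_{c,\text{rad}}(\mathbb{R}^n)$.

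Fix such $\mathbf{f}=(f_1,f_2)$ with radial profiles $F_1,F_2\in C^\infty_c([0,\infty))$. Writing the resolvent equation componentwise, the first equation gives $u_2=\tfrac{n}{2}u_1+\Lambda u_1-f_1$; substituting into the second and reducing to radial profiles $u_i(x)=U_i(|x|)$ yields the scalar second-order Fuchsian ODE
\[
(r^2-1)U_1''(r)+\Big((n+2)r-\frac{n-1}{r}\Big)U_1'(r)+\frac{n(n+2)}{4}U_1(r)=G(r)
\]
on $[0,\infty)$, where $G(r)=F_2(r)+(\tfrac{n}{2}+1)F_1(r)+rF_1'(r)$ is smooth and compactly supported. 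This equation has regular singular points at $r=0,1,\infty$ with indicial exponents $\{0,2-n\}$, $\{0,-\tfrac{1}{2}\}$, and $\{-\tfrac{n}{2},-\tfrac{n}{2}-1\}$; under the substitution $U_1(r)=W(r^2)$ it becomes a Gauss hypergeometric equation with parameters $(a,b,c)=(\tfrac{n}{4},\tfrac{n+2}{4},\tfrac{n}{2})$, so explicit fundamental solutions are available.

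I construct $U_1$ by variation of parameters using two distinguished homogeneous solutions: $\phi_0$, analytic at $r=0$ with $\phi_0(0)=1$, and $\phi_\infty$, with the decay $\phi_\infty(r)=O(r^{-n/2-1})$ at infinity. Their Wronskian is given by Abel's formula, $|W[\phi_0,\phi_\infty](r)|\simeq r^{1-n}|r^2-1|^{-3/2}$. Plugging the resulting Green's function into a variation-of-parameters formula and using that $G$ has compact support, I obtain $U_1$ globally on $[0,\infty)$ with pointwise estimates $|U_1^{(k)}(r)|\lesssim\langle r\rangle^{-n/2-1-k}$ (inherited from $\phi_\infty$), and with $U_1$ smooth near $r=0$ (because the $r^{2-n}$ singularity of $\phi_\infty$ is killed by the vanishing of the corresponding integral to order $r^n$). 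Setting $U_2=\tfrac{n}{2}U_1+rU_1'-F_1$, the pair $\mathbf{u}=(U_1(|\cdot|),U_2(|\cdot|))$ inherits analogous decay. Since $s_1<\tfrac{n}{2}$, the rates $\langle r\rangle^{-n/2-1-k}$ and $\langle r\rangle^{-n/2-k}$ strictly beat the thresholds $\langle r\rangle^{s_1-n/2-|\alpha|-1}$ and $\langle r\rangle^{s_1-n/2-|\alpha|-2}$ of \eqref{Eq:Decay_assumpt}, so Lemma \ref{Lem:Decay_functions} yields $\mathbf{u}\in\mathcal{D}(\mathbf{L}_0)$ with $\mathbf{L}_0\mathbf{u}=\widetilde{\mathbf{L}}_0\mathbf{u}$, closing the argument.

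The main technical obstacle is global smoothness of $U_1$ at the characteristic point $r=1$. The Kummer connection formulas for the hypergeometric function show that neither $\phi_0$ nor $\phi_\infty$ is smooth across $r=1$: each picks up a nonzero $(1-r^2)^{-1/2}$ contribution upon analytic continuation through the singularity, because $\Gamma(n/4)\Gamma((n+2)/4)$ is finite for every $n\geq 3$. Smoothness of $U_1$ at $r=1$ must therefore come from a delicate cancellation between the two singular integrals of the variation-of-parameters formula—a cancellation that morally reflects finite speed of propagation for the free wave equation, and whose rigorous verification to all orders of differentiation is the heart of the construction.
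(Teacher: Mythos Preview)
Your overall strategy---reduce to a radial Fuchsian ODE, solve by variation of parameters, and invoke Lemma~\ref{Lem:Decay_functions}---matches the paper's, and the hypergeometric identification is correct. The gap is exactly where you flag it: you declare that smoothness of $U_1$ at the characteristic point $r=1$ is ``the heart of the construction'' but do not carry it out. With your basis $\{\phi_0,\phi_\infty\}$ both elements carry the $(1-r^2)^{-1/2}$ singularity across $r=1$, so the cancellation you need in the two variation-of-parameters integrals is genuinely opaque, and ``morally reflects finite speed of propagation'' is not a proof.

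The paper's key idea is a different choice of fundamental system on $(0,1)$: take $v_0$ the Frobenius solution analytic at $r=0$ and $v_1$ the Frobenius solution analytic at $r=1$. That these are linearly independent is not automatic; the paper proves it by contradiction---if $v_0$ were also analytic at $r=1$ it would extend to an entire solution (by the index structure at $\infty$), and via Lemma~\ref{Lem:Decay_functions} would furnish an eigenfunction of $\mathbf{L}_0$ at eigenvalue $\tfrac{n}{2}-1$, contradicting the dissipativity bound~\eqref{Eq:LM_est_closed}. With this basis only one term in the variation-of-parameters formula can be singular at $r=1$; after expanding $v_0$ in the Frobenius basis there, the problematic piece becomes $(1-r)^{-1/2}v_2(r)\int_r^1 v_1(s)\sqrt{1-s}\,h_1(s)\,ds$ with $v_1,v_2,h_1$ analytic near $1$, and the substitution $s=r+(1-r)t$ renders this manifestly $C^\infty$ up to $r=1$. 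The solution is then continued to $(1,\infty)$ by an analogous formula and matched. Two minor corrections: the paper's solution generically decays only like $r^{-n/2}$ at infinity, not $r^{-n/2-1}$ as you assert, but this still meets~\eqref{Eq:Decay_assumpt} since $s_1>1$; and smoothness at $r=0$ is obtained by elliptic regularity rather than by tracking the vanishing order of the integrals.
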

\begin{proof}
	Our approach consists of showing that for $\bm f \in \core \times \core$, the equation
	\begin{equation}\label{Eq:Nonhom_spectral_eq}
		\big((\tfrac{n}{2}-1 ) \bm I - \widetilde{\bm L}_0 \big) \bm u = \bm f
	\end{equation}
has a solution that satisfies the assumptions of Lemma \ref{Lem:Decay_functions}. We thereby conclude that $\core \times \core \subseteq \rg \, ((\tfrac{n}{2}-1 ) \bm I - \bm L_0)$, wherefrom the closedness of $\bm L_0$ and \eqref{Eq:LM_est_closed} imply the claim. 
To solve \eqref{Eq:Nonhom_spectral_eq}, we start by explicitly writing out the first component
\begin{equation}\label{Eq:First_component}
	u_2 = \Lambda u_1 + \tfrac{n}{2}u_1 - f_1.
\end{equation} Substituting this into the second component yields an equation for $u_1$
\begin{equation}\label{Eq:Elliptic_PDE}
	\Delta u_1 - \Lambda^2 u_1 - (n+1)\Lambda u_1 -\tfrac{n}{2}(\tfrac{n}{2}+1)u_1=g,
\end{equation}
where $g=-f_2 - \Lambda f_1 - (\tfrac{n}{2}+1)f_1$. Now, we pass to the radial profiles of $u_1$ and $g$. Namely, by assuming that $u_1=v(|\cdot|)$ and $g=h(|\cdot|)$, we obtain an ODE for $v$
\begin{equation}\label{Eq:ODE_nonhom}
	(1-r^2)v''(r)+\left(\tfrac{n-1}{r}-(n+2)r\right)v'(r)-\tfrac{n}{2}(\tfrac{n}{2}+1)v(r)=h(r).
\end{equation}
With the intention of solving this equation by the variation of constants formula, we first treat the homogeneous version
\begin{equation}\label{Eq:ODE_hom}
	(1-r^2)v''(r)+\left(\tfrac{n-1}{r}-(n+2)r\right)v'(r)-\tfrac{n}{2}(\tfrac{n}{2}+1)v(r)=0.
\end{equation}
 The sets of Frobenius indices of \eqref{Eq:ODE_hom} at $r=0$ and $r=1$ are  $\{0,2-n\}$ and $\{0,-\frac{1}{2}\}$ respectively. Therefore, there are Frobenius solutions $v_0$ and $v_1$ which are analytic at $r=0$ and $r=1$ respectively, and have non-zero values there. We claim that these two solutions form a fundamental system on $(0,1)$. To show this, we assume the contrary. As this means that $v_0$ is analytic at both $r=0$ and $r=1$, there is a power series expansion
\begin{equation*}
 	v_0(r)= \sum_{i=0}^{\infty}a_i r^{2i}, \quad a_0 \neq 0,
 \end{equation*}
with infinite radius of convergence. Consequently, the
functions $u_1:=v_0(|\cdot|)$ and $u_2:= \Lambda u_1 +\frac{n}{2} u_1$ are smooth on $\R^n$ and radial. Furthermore, since the Frobenius indices of \eqref{Eq:ODE_hom} at $r=\infty$ are $i_1=\frac{n}{2}$ and $i_2=\frac{n}{2}+1$, we see that $u_1$ and $u_2$ satisfy the estimates \eqref{Eq:Decay_assumpt}, and therefore $\bm u := (u_1,u_2)$ belongs to $\mc D(\bm L_0)$ by Lemma \ref{Lem:Decay_functions}. However, then $\bm L_0 \bm u = \widetilde{\bm L}_0 \bm u = (\frac{n}{2}-1)\bm u $, implying that $\mb u$ violates \eqref{Eq:LM_est_closed}, a contradiction.

 Now, since $\{v_0,v_1\}$ is a fundamental system on $(0,1)$, based on the form of \eqref{Eq:ODE_hom} we get that the Wronskian is $W(v_0,v_1)(r)=c (1-r^2)^{-\frac{3}{2}}r^{1-n}$, for some $c\neq 0$. Consequently, by means of the variation of constants formula, we construct a particular solution to \eqref{Eq:ODE_nonhom} on $(0,1)$
 \begin{align}
 	v(r) &=-v_0(r)\int_{r}^1 \frac{v_1(s)}{W(v_0,v_1)(s)}\frac{h(s)}{1-s^2}ds 
 	-v_1(r)\int_{0}^r \frac{v_0(s)}{W(v_0,v_1)(s)}\frac{h(s)}{1-s^2}ds \nonumber \\
 	 &=-v_0(r)\int_{r}^1 v_1(s)\sqrt{1-s} \, h_1(s)ds 
 	-v_1(r)\int_{0}^r v_0(s)\sqrt{1-s} \, h_1(s)ds,  \label{Eq:Var_con_2}
 \end{align}
where the function $h_1(s)=c^{-1}s^{n-1}\sqrt{1+s}\,h(s)$ belongs to  $C^\infty [0,\infty)$ and has bounded support. Obviously, $v \in  C^\infty(0,1)$. We claim that $v \in C^\infty(0,1].$  To show this, we first recall that the Frobenius indices of \eqref{Eq:ODE_hom} at $r=1$ are $i_1=0$ and $i_2=-\frac{1}{2}$. Therefore, as $v_1$ is the analytic Frobenius solution at $r=1$, the other one has the form $(1-r)^{-\frac{1}{2}}v_2(r)$, for some $v_2$ which is analytic at $r=1$. Consequently, there  are constants  $c_1,c_2$ such that
\begin{equation}\label{Eq:v_0_linear}
	v_0(r) = c_1 v_1(r) + c_2 \frac{v_2(r)}{\sqrt{1-r}}
\end{equation}
for $r \in (0,1)$. Now, note that the second integral  in  \eqref{Eq:Var_con_2} converges as  $r  \rightarrow  1^-$. Denote its value by $\alpha$. Then, by substituting \eqref{Eq:v_0_linear} into \eqref{Eq:Var_con_2} we get that
\begin{equation}\label{Eq:v_at_1}
	v(r)=-c_2 \frac{v_2(r)}{\sqrt{1-r}} \int_{r}^{1} v_1(s)\sqrt{1-s} \, h_1(s)ds - \alpha v_1(r) + c_2 v_1(r)  \int_{r}^{1}v_2(s)h_1(s)ds.  
\end{equation}
From here we conclude smoothness up to $r=1$. Indeed, the second and the third term in \eqref{Eq:v_at_1} are manifestly smooth at $r=1$. For the first term, we can easily see this via the substitution $s= r +(1-r)t$, by means of which this term becomes
\begin{equation*}
	-c_2v_2(r)(1-r) \int_{0}^{1} v_1\big(r+(1-r)t\big)h_1\big(r+(1-r)t\big)\sqrt{1-t}dt.
\end{equation*}
For $r>0$, one can differentiate under the integral sign, and smoothness up to $r=1$ follows.  

Now, we extend the solution \eqref{Eq:Var_con_2} beyond $r=1$. To this end, we first note that, by analytic continuation, the two functions $v_1(r)$ and $(r-1)^{-\frac{1}{2}}v_2(r)$  form a fundamental system for \eqref{Eq:ODE_hom} on $(1,\infty)$. Then, as the Wronskian for this pair is a non-zero multiple of $(r^2-1)^{-\frac{3}{2}}r^{1-n}$, by variation of constants we get that 
\begin{equation}\label{Eq:v_beyond_1}
	v(r)=-c_2 \frac{v_2(r)}{\sqrt{r-1}} \int_{r}^{1} v_1(s)\sqrt{s-1} \, h_1(s)ds - \alpha v_1(r) + c_2 v_1(r)  \int_{r}^{1}v_2(s)h_1(s)ds  
\end{equation}
is a smooth  solution  to equation \eqref{Eq:ODE_nonhom} on $(1,\infty)$. In a similar way  to above, we conclude that this solution belongs to $C^\infty[1,\infty)$. In summary, based on \eqref{Eq:v_at_1} and \eqref{Eq:v_beyond_1}, we get that
\begin{equation*}\label{Eq:v_global}
	v(r)=-c_2 \frac{v_2(r)}{\sqrt{|r-1|}} \int_{r}^{1} v_1(s)\sqrt{|s-1|} \, h_1(s)ds - \alpha v_1(r) + c_2 v_1(r)  \int_{r}^{1}v_2(s)h_1(s)ds  
\end{equation*}
is a smooth  solution to \eqref{Eq:ODE_nonhom} on  $(0,\infty)$. Therefore, $u_1:=v(|\cdot|)$ belongs to $C^\infty(\R^n\setminus\{0\})$ and solves the PDE \eqref{Eq:Elliptic_PDE} away from zero. We claim that $u_1$ is smooth at zero as well. To show this, we do the following. Based on the  asymptotic behavior of the functions appearing in \eqref{Eq:Var_con_2}, we see that $v(r)=O(1)$  and $v'(r)=O(r)$ near $r=0$. Therefore $u_1  \in  H^1(\B^n)$. Furthermore, $u_1$ is a weak solution to \eqref{Eq:Elliptic_PDE} on $\B^n$, and by  elliptic regularity we conclude that $u_1  \in C^\infty(\B^n)$. Consequently, $u_1$, and $u_2$ given by \eqref{Eq:First_component} both belong to $C^\infty_{\text{rad}}(\R^n)$, and $\bm u :=(u_1,u_2)$ solves \eqref{Eq:Nonhom_spectral_eq} globally. 

It remains to determine the  asymptotic behavior at infinity. Since $h_1$ has bounded support, from \eqref{Eq:v_beyond_1} we see  that for large values of $r$
\begin{equation*}
	v(r)=\tilde{c}_1 v_1(r) + \tilde{c}_2 \frac{v_2(r)}{\sqrt{r-1}},
\end{equation*}
for some constants $\tilde{c}_1,\tilde{c}_2$. Therefore, for large $r$, the function $v(r)$ is equal to a linear combination of the Frobenius solutions to \eqref{Eq:ODE_hom} at $r=\infty$. Since the Frobenius indices at $r=\infty$ are $i_1=\frac{n}{2}$ and $i_2=\frac{n}{2}+1$, we get that  $u_1$ and $u_2$ satisfy the decay conditions \eqref{Eq:Decay_assumpt}. Consequently, by  Lemma \eqref{Lem:Decay_functions}, $\bm u$  belongs to 
$\mc D(\bm L_0)$, and this finishes the proof.
\end{proof}
 
\begin{proof}[Proof of Proposition \ref{Prop:Free_semigroup}] Since $\bm L_0$ is densely defined, satisfies \eqref{Eq:LM_est_closed} and Lemma \ref{Lem:Closed_range} holds, the claim follows from the Lumer-Phillips theorem (see, e.g., \cite{RenRog04}, p.~407, Theorem 12.22).
\end{proof}

\section{Existence of the perturbed semigroup on $\mc H^{s,k}$}\label{Sec:Perturbed_semigroup}

\noindent In this section we propagate Proposition \ref{Prop:Free_semigroup} to the operator $\widetilde{\mb L}:=\widetilde{\mb L}_0 + \mb L'$. First, we let $\mc D(\widetilde{\mb L}):=\mc D(\widetilde{\mb L}_0)$. Then, we note that boundedness of $\mb L'$ suffices to infer that (the closure of)  $\widetilde{\mb L}$ generates a semigroup in $\mc H^{s_1,s_2}$.  In what follows, we show that the operator $\mb L'$ is, in fact, compact. This property of $\mb L'$ will be essential for the rest of the paper. To make the argument more transparent, and in anticipation of wider applications, we prove a more general compactness result. 
\begin{lemma}
		Suppose that 
	\begin{equation*}\label{Eq:Conds_param_compactness}
		 n \geq 3, \quad 1 < s < \tfrac{n}{2}, \quad k > \tfrac{n}{2} +1, \quad k \in \mathbb{N}.
	\end{equation*}
Furthermore, assume that $V \in C^\infty_{\emph{rad}}(\R^n)$, and that given  $\alpha \in \mathbb{N}_0^n$ with $|\alpha|\leq k$
\begin{equation*}
	|\partial^\alpha V(x) | \lesssim \inner{x}^{-2-|\alpha|}
\end{equation*}
for all $x \in \R^n$.
	Then the mapping
\begin{equation}\label{Eq:Potential_map}
	\begin{pmatrix}
		u_1\\u_2
	\end{pmatrix}  \mapsto
\begin{pmatrix}
	0 \\ V u_1
\end{pmatrix} 
\end{equation}
defines a compact linear operator on $\mc H^{s,k}$.
\end{lemma}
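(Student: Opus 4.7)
The plan is to prove compactness of $T: \bm u \mapsto (0, Vu_1)$ via a cutoff approximation, writing $T$ as an operator-norm limit of compact truncations and then invoking Rellich--Kondrachov. Fix a radial $\chi \in C^\infty_c(\R^n)$ with $\chi \equiv 1$ on $\B^n$ and support in $2\B^n$, set $\chi_R := \chi(\cdot/R)$, and decompose $V = V_R + W_R$ with $V_R := \chi_R V$ and $W_R := (1-\chi_R)V$. Correspondingly, split the operator in \eqref{Eq:Potential_map} as $T = T_R + S_R$, where $T_R$ multiplies the first component by $V_R$ and $S_R$ by $W_R$.

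For each fixed $R$, I would show $T_R$ is compact. Given a bounded sequence $(\bm u_j) \subset \mc H_{s,k}$, the functions $V_R u_{1,j}$ are supported in $2R\B^n$. Leibniz (valid since $k \in \mathbb{N}$), the equivalence \eqref{Eq:Norms_equiv}, the pointwise bounds from Lemma \ref{Lem:L_infty_embedding}, and interpolation between $\dot H^s$ and $\dot H^k$ for the intermediate-order derivatives give uniform $H^k(\R^n)$ boundedness. Rellich--Kondrachov extracts a subsequence convergent in $H^{k-1}(\R^n)$; since all members share the same compact support, splitting the Plancherel integral into low- and high-frequency pieces (using $0 < s-1 \leq k-1$) yields $\|w\|_{\dot H^{s-1}} + \|w\|_{\dot H^{k-1}} \lesssim \|w\|_{H^{k-1}}$ uniformly. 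Hence convergence upgrades to $\mc H_{s,k}$, and $T_R$ is compact.

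The main obstacle is establishing operator-norm decay $\|S_R\|_{\mc H_{s,k} \to \mc H_{s,k}} \to 0$ as $R \to \infty$. The decay assumption propagates to $|\partial^\alpha W_R(x)| \lesssim \langle x \rangle^{-2-|\alpha|} \mathbf{1}_{|x| \geq R}$, so that every estimate picks up a tail factor that vanishes as $R \to \infty$. For the integer endpoint $\dot H^{k-1}$, the Leibniz expansion of $\partial^\gamma(W_R u_1)$ for $|\gamma| = k-1$ produces terms $\partial^\alpha W_R \cdot \partial^\beta u_1$ which I would bound in $L^2$ via Hardy-type weighted inequalities for radial Sobolev functions (for high $|\beta|$) and via the pointwise bounds from Lemma \ref{Lem:L_infty_embedding} (for $|\beta| < s$). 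For the fractional endpoint $\dot H^{s-1}$ with $s-1 \in (0, n/2-1)$, I would invoke a Kato--Ponce-type fractional Leibniz inequality, or alternatively dualize against $\dot H^{1-s}$, and again extract the gain from $\mathrm{supp}\, W_R \subset \{|x| \geq R\}$; carrying out the argument cleanly in the non-integer regime is the delicate point.

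With both steps established, $T_R \to T$ in $\mc B(\mc H_{s,k})$, and closedness of the compact operators under operator-norm limits yields compactness of $T$.
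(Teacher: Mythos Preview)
Your cutoff-and-approximate strategy is a viable alternative and can be pushed through. The paper takes a different route: rather than splitting $V$, it shows that $\bm u \mapsto (0,Vu_1)$ maps $\mc H_{s,k}$ compactly into the \emph{integer-exponent} space $\mc H_{\lfloor s \rfloor, k}$, and then composes with the continuous embedding $\mc H_{\lfloor s \rfloor, k} \hookrightarrow \mc H_{s,k}$. This sidesteps entirely the fractional-Leibniz issue you flag as ``the delicate point'': with integer exponents on both ends, every required bound reduces to controlling $\partial^\beta V \cdot \partial^\gamma u$ in $L^2$, which the paper handles uniformly via Hardy's inequality for $|\gamma| < s$ (the missing weight $|\cdot|^{s-|\gamma|}$ being absorbed by the $\langle x\rangle^{-2-|\beta|}$ decay of $\partial^\beta V$) and by interpolation for $|\gamma| \geq s$. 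Compactness is then read off from a Rellich--Kondrachov theorem with built-in tail control (the paper cites \cite{HanHar10}, Theorem~10), which packages your two steps---local compactness of $T_R$ and $\|S_R\| \to 0$---into a single total-boundedness criterion. One caveat on your version: the $L^\infty$ bounds from Lemma~\ref{Lem:L_infty_embedding} only reach derivative order $m$ with $k > \tfrac{n}{2}+m$, so under the stated hypotheses there may be integers $|\beta|$ with $m < |\beta| < s$ covered neither by your pointwise bounds nor by interpolation; Hardy closes that gap too, and is in fact the more robust tool throughout.
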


\begin{proof}
	Our approach is as follows. We first utilize the equivalence \eqref{Eq:Norms_equiv} to show that \eqref{Eq:Potential_map} maps $\mc H^{s,k}$ into $\mc H^{\lfloor s\rfloor,k}$ compactly. Then we simply compose this map with the continuous embedding $\mc H^{\lfloor s\rfloor,k} \hookrightarrow \mc H^{s,k}$. To show compactness, it is enough to prove that the set 
\begin{equation*}
	K_{\alpha}:=\{ \partial^\alpha (Vu) : u \in \core, \norm{u}_{\dot{H}^s \cap \hsob{k}} \leq 1   \} 
\end{equation*}
is totally bounded in $L^2(\R^n)$ for all multi-indices $\alpha$ of length $\lfloor s\rfloor-1$ or $k-1$. For this, we use a suitable version of the Rellich-Kondrachov theorem given in \cite{HanHar10}, Theorem 10. To apply this result, we first convince ourselves that $K_\alpha$ is bounded in $H^1(\R^n)$. To this end, let $\beta,\gamma$ be multi-indices such that $ \lfloor s\rfloor-1 \leq |\beta| + |\gamma| \leq k $. If $|\gamma|<s$, we exploit the decay of $V$ and its derivatives to obtain the following bound
\begin{equation*}
	\| \partial^\beta V  \partial^\gamma u  \|_{L^2(\R^n)} \lesssim \norm{|\cdot|^{-s+|\gamma|}\partial^\gamma u}_{L^2(\R^n)} \lesssim \norm{u}_{\hsob{s}},
\end{equation*}
where for the second estimate we used Hardy's inequality (see, e.g., \cite{MusSch13}, p.~243, Theorem 9.5). If $|\gamma| \geq s$, then we simply have
\begin{equation*}
	\| \partial^\beta V  \partial^\gamma u  \|_{L^2(\R^n)} \lesssim \norm{\partial^\gamma u}_{L^2(\R^n)} \lesssim \norm{u}_{\dot{H}^s \cap \hsob{k}},
\end{equation*}
for all $u \in \core$.
This shows that $K_\alpha$ is bounded in $H^1(\R^n)$. In the same way, we use the decay of $V$ and its derivatives to infer the existence of $\epsilon>0$ such that
\begin{equation*}
	\| \partial^\beta V  \partial^\gamma u  \|_{L^2(\R^n\setminus \B^n_R)} \lesssim \frac{1}{R^\epsilon}\norm{u}_{\dot{H}^s \cap \hsob{k}}
\end{equation*}
for all $u \in \core$ and all $R>0$. This, according to \cite{HanHar10}, Theorem 10, implies the desired total boundedness of $K_\alpha$, and we are done.
\end{proof}

 Since our potential $V$ from \eqref{Def:L'} satisfies the assumptions of the previous proposition, we readily obtain the compactness of $\mb L'$.

\begin{corollary}
	Assume that
	\begin{equation}\label{Eq:Conds_on_s,k}
	 n \geq 5, \quad 1 < s < \tfrac{n}{2}, \quad  k > \tfrac{n}{2} + 1, \quad  k \in \mathbb{N}. 
	\end{equation}
	Then the operator $\bm L' : \mc H^{s,k} \rightarrow \mc H^{s,k}$ is compact.
\end{corollary}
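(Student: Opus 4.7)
The plan is to reduce the corollary to a direct verification of the hypotheses of the preceding lemma applied to the specific potential $V(\xi)=\frac{8(n-4)(n-3)}{(|\xi|^2+n-4)^2}$ from \eqref{Def:L'}. Since that lemma already delivers compactness on $\mc H_{s,k}$ for any $V \in C^\infty_{\text{rad}}(\R^n)$ satisfying $|\partial^\alpha V(x)| \lesssim \inner{x}^{-2-|\alpha|}$ for $|\alpha|\leq k$, there is nothing left to do beyond confirming these two properties for the explicit $V$ above.

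First, I would observe that because $n\geq 5$, we have $n-4 \geq 1 > 0$, so the denominator $(|\xi|^2 + n-4)^2$ is bounded away from zero on all of $\R^n$. Hence $V$ is a composition of smooth functions of $|\xi|^2$, making it an element of $C^\infty_{\text{rad}}(\R^n)$. This handles the regularity and symmetry hypotheses immediately.

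Next, for the derivative decay estimates, I would argue by induction on $|\alpha|$ that $\partial^\alpha V$ is a finite sum of terms of the form $c_\beta\, P_\beta(\xi)\,(|\xi|^2+n-4)^{-2-j}$ where $P_\beta$ is a polynomial of degree at most $j$ and $j \geq |\alpha|/2$ (each derivative either lowers the polynomial degree by one and raises the exponent by zero, or pulls down one factor of $\xi_i$ and raises the exponent by one). In either case, the resulting term is bounded pointwise by a constant multiple of $\inner{\xi}^{-4-|\alpha|}$, which is in turn dominated by $\inner{\xi}^{-2-|\alpha|}$. Thus the hypothesis $|\partial^\alpha V(x)| \lesssim \inner{x}^{-2-|\alpha|}$ holds for every $\alpha \in \mathbb{N}_0^n$, not merely for $|\alpha|\leq k$.

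With both hypotheses verified, the preceding lemma applies verbatim and yields that the map $(u_1,u_2)\mapsto (0,Vu_1)$ is a compact linear operator on $\mc H_{s,k}$, which is precisely $\mb L'$. No obstacle is expected; the only mild bookkeeping issue is keeping track of the induction on $|\alpha|$, but the polynomial-times-reciprocal-power structure of $V$ makes this entirely routine.
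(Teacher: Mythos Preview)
Your proposal is correct and follows exactly the paper's approach: the paper's proof is simply the one-line observation that the explicit potential $V$ from \eqref{Def:L'} satisfies the hypotheses of the preceding lemma, and you carry out precisely that verification in more detail. One small remark on your bookkeeping: the invariants ``degree at most $j$'' and ``$j\geq |\alpha|/2$'' you record are correct but on their own only yield decay $\inner{\xi}^{-4-|\alpha|/2}$; the sharper bound $\inner{\xi}^{-4-|\alpha|}$ you assert actually follows from the homogeneity count implicit in your parenthetical (each branch of the product rule lowers the net degree $d-2(2+j)$ by exactly one), so the conclusion stands.
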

Note that mere boundedness of $\mb L'$ is enough to infer that the operator $\widetilde{\mb L}$ is closable, with the closure being
\begin{equation*}
	\mb L= \mb L_0 + \mb L', \quad \text{with} \quad \mc D(\mb L)=\mc D(\mb L_0).
\end{equation*}
Then, as a direct consequence of the bounded perturbation theorem (see, e.g., \cite{EngNag00}, p.~158, Theorem 1.3), we obtain the following result.
\begin{proposition}\label{Prop:S}
	Assume \eqref{Eq:Conds_on_s,k}. Then the operator $\bm L : \mc D(\bm L) \subseteq \mc H^{s,k} \rightarrow  \mc H^{s,k}$ generates a strongly continuous semigroup $(\bm S(\tau))_{\tau \geq 0}$ of bounded operators on $\mc H^{s,k}$. Furthermore, we have that
		\begin{equation}\label{Eq:Growth_S_pert}
		\norm{{\bf S}(\tau){\bf u}}_{\mc H^{s,k}} \leq e^{(\frac{n}{2}-1-s+ \| \bf L'\|)\tau} \norm{{\bm u}}_{\mc H^{s,k}}
	\end{equation}
for $\bm u \in \mc H^{s,k}$ and $\tau \geq 0$. 
\end{proposition}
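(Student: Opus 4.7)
The plan is to derive this directly from the bounded perturbation theorem for $C_0$-semigroups. The foundational ingredients are already in place: Proposition \ref{Prop:Free_semigroup} (applied with $s_1=s$ and $s_2=k$, which satisfy \eqref{Eq:Conds_free_semigroup} under the hypothesis \eqref{Eq:Conds_on_s,k}) yields that $\mb L_0$ generates a strongly continuous semigroup $\mb S_0(\tau)$ on $\mc H_{s,k}$ with growth bound
\begin{equation*}
	\norm{\mb S_0(\tau) \mb u}_{\mc H_{s,k}} \leq e^{(\frac{n}{2}-1-s)\tau}\norm{\mb u}_{\mc H_{s,k}},
\end{equation*}
while the preceding corollary tells us that $\mb L' : \mc H_{s,k} \to \mc H_{s,k}$ is compact, hence in particular bounded.

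First I would verify that $\mb L$ is well-defined as stated. Since $\mb L'$ is bounded and everywhere defined on $\mc H_{s,k}$, the sum $\widetilde{\mb L} = \widetilde{\mb L}_0 + \mb L'$ inherits closability from $\widetilde{\mb L}_0$, and its closure is exactly $\mb L_0 + \mb L'$ on the domain $\mc D(\mb L_0)$, as already noted in the paragraph preceding the statement.

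Next I would invoke the bounded perturbation theorem (e.g.\ \cite{EngNag00}, p.~158, Theorem 1.3): if $A$ generates a $C_0$-semigroup satisfying $\norm{T(\tau)} \leq Me^{\omega \tau}$ and $B$ is a bounded operator, then $A+B$ generates a $C_0$-semigroup $S(\tau)$ satisfying $\norm{S(\tau)} \leq Me^{(\omega + M\norm{B})\tau}$. Applying this with $A = \mb L_0$, $B = \mb L'$, $M=1$ and $\omega = \frac{n}{2}-1-s$ gives precisely that $\mb L$ generates a strongly continuous semigroup $\mb S(\tau)$ on $\mc H_{s,k}$ with the growth estimate \eqref{Eq:Growth_S_pert}.

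There is no real obstacle here, since both hypotheses of the perturbation theorem have already been verified; the proof is essentially a one-line citation. The only point requiring a moment's care is matching the exponential constant, but the growth bound in \eqref{Eq:S_0_decay} has prefactor $M=1$, so the perturbation theorem yields exactly the exponent $\frac{n}{2}-1-s+\norm{\mb L'}$ appearing in \eqref{Eq:Growth_S_pert}.
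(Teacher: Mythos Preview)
Your proposal is correct and follows exactly the paper's approach: the paper states the proposition as ``a direct consequence of the bounded perturbation theorem (see, e.g., \cite{EngNag00}, p.~158, Theorem 1.3),'' which is precisely what you invoke after recalling the growth bound from Proposition \ref{Prop:Free_semigroup} and the boundedness of $\mb L'$. There is nothing to add.
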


\section{Spectral analysis of $\mb L$}\label{Sec:Spec_analy_L}

\noindent Although the bounded perturbation theorem guarantees existence of a semigroup, the growth estimate it yields is typically an overshoot. To get a better handle on the growth of $\mb S(\tau)$, we resort to a spectral mapping theorem that applies to this setting. Namely, the fact that $\mb L$ is a compact perturbation of $\mb L_0$ implies that any additional growth of $\mb S(\tau)$ relative to $\mb S_0(\tau)$ is induced by the point spectrum of $\mb L$. The precise statement of this fact is given in \cite{Glo22}, Theorem B.1, which is a result we will frequently refer to throughout this section. 

In the rest of the paper, we assume that $s> \frac{n}{2}-1$. This, in particular, ensures the exponential decay of the free semigroup $\mb S_0(\tau)$; see \eqref{Eq:S_0_decay}. Then, according to the discussion above, we have that potential growth of $\mb S(\tau)$ is necessarily caused by the presence of the unstable spectrum of its generator $\mb L$. As it turns out, the only unstable spectral point of $\mb L$ is a simple eigenvalue $\la=1$, which is induced by the underlying time translation symmetry. 
\begin{proposition}\label{Prop:Pert_semigroup}
	Assume that
	\begin{equation}\label{Eq:Conds_n_geq_5}
		 n \geq 5, \quad \tfrac{n}{2} - 1 < s < \tfrac{n}{2}, \quad  k > \tfrac{n}{2} +1, \quad  k \in \mathbb{N}. 
	\end{equation}
Then for the operator $\bm L: \mc D(\bm L) \subseteq \mc H^{s,k} \rightarrow \mc H^{s,k}$ the following statements hold.
\begin{itemize}[leftmargin=6mm]
	\setlength{\itemsep}{1mm}
	\item[\emph{i)}] {Spectral gap property:} 
	There exists $ \omega \in (0, s+1-\frac{n}{2})$ such that
	\begin{equation*}
		\{ \la \in \sigma(\bm L) : \Re \la \geq -\omega \} = \{ 1 \}.
	\end{equation*} 
	\item[\emph{ii)}] {Symmetry eigenvalue:} The point $\la=1$ is a simple eigenvalue, with an explicit eigenfunction
	 \begin{equation}\label{Def:g}
		\bm g:=
		\begin{pmatrix}
			g \\ \Lambda g + 2g
		\end{pmatrix},
		\quad \text{where} \quad g(\xi)=\frac{1}{|\xi|^2+n-4}.
	\end{equation}
\end{itemize}
\end{proposition}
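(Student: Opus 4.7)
The plan is to first reduce the problem to a point-spectrum analysis and then match the resulting eigenvalue equation to the ODE studied in the earlier work \cite{CosDonGlo17}. Since $\mathbf{L}'$ is compact on $\mathcal{H}_{s,k}$ and $\mathbf{S}_0(\tau)$ obeys the growth bound $e^{(n/2-1-s)\tau}$ with $\tfrac{n}{2}-1-s<0$ under \eqref{Eq:Conds_n_geq_5}, the spectral mapping theorem, Theorem B.1 in \cite{Glo21}, asserts that every $\lambda\in\sigma(\mathbf{L})$ with $\Re\lambda > \tfrac{n}{2}-1-s$ is an eigenvalue of finite algebraic multiplicity, and such eigenvalues do not accumulate on the line $\Re\lambda=\tfrac{n}{2}-1-s$. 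Consequently, it suffices to classify the eigenvalues of $\mathbf{L}$ in $\{\Re\lambda\geq 0\}$; once one establishes that the only one is $\lambda=1$ and that it is simple, the gap $\omega\in(0,s+1-\tfrac{n}{2})$ follows by taking $\omega$ smaller than the distance from $0$ to the finite set of eigenvalues in $\{\tfrac{n}{2}-1-s\leq\Re\lambda<0\}$, if any.

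Next I would reduce the eigenvalue equation $(\lambda\mathbf{I}-\mathbf{L})\mathbf{u}=0$ to an ODE for the radial profile $v(r)$ of $u_1$. The first component yields $u_2=(\Lambda+\lambda+1)u_1$, and substitution into the second produces
\begin{equation*}
(1-r^2)v''(r) + \left(\tfrac{n-1}{r} - (2\lambda+4)r\right)v'(r) - (\lambda+1)(\lambda+2)\,v(r) + V(r)\,v(r) = 0,
\end{equation*}
where $V(r)=8(n-4)(n-3)/(r^2+n-4)^2$. This ODE has regular singular points at $r=0$, $r=1$ and $r=\infty$. An eigenvector $\mathbf{u}\in\mathcal{D}(\mathbf{L})$ must, by Lemma \ref{Lem:L_infty_embedding}, be classical, so $v$ must be the analytic Frobenius solution at $r=0$, must extend smoothly across the light cone $r=1$, and, via Lemma \ref{Lem:Decay_functions} together with the norm structure of $\mathcal{H}_{s,k}$, must decay at $r=\infty$ at the rate dictated by the Frobenius indices $\tfrac{n}{2}$ and $\tfrac{n}{2}+1$ at infinity.

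The crucial observation is that the first two interior conditions, regularity at $r=0$ and smoothness across $r=1$, are precisely the ones imposed in the lightcone-based spectral analysis of \cite{CosDonGlo17}, which studies the same ODE on $[0,1]$ for all $n\geq 5$. The main result there states that the only $\lambda$ in the closed right half-plane for which a solution analytic at $r=0$ extends analytically to $r=1$ is $\lambda=1$, with one-dimensional kernel and no higher generalized eigenfunctions. Pulling this back to $\mathcal{H}_{s,k}$ shows that the only eigenvalue of $\mathbf{L}$ with $\Re\lambda\geq 0$ is $\lambda=1$ and that it is algebraically simple, since a putative Jordan chain would produce, at the ODE level, an inhomogeneous problem with the same two-sided regularity constraints at $r=0$ and $r=1$, whose solvability is ruled out in \cite{CosDonGlo17}.

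Finally, I would verify directly that $\mathbf{g}$ in \eqref{Def:g} is an eigenfunction. The profile $g(r)=(r^2+n-4)^{-1}$ is smooth on $[0,\infty)$ with polynomially decaying derivatives, so both $g$ and $\Lambda g+2g$ satisfy \eqref{Eq:Decay_assumpt}; hence Lemma \ref{Lem:Decay_functions} gives $\mathbf{g}\in\mathcal{D}(\mathbf{L})$ with $\mathbf{L}\mathbf{g}=\widetilde{\mathbf{L}}\mathbf{g}$, and the identity $\mathbf{L}\mathbf{g}=\mathbf{g}$ reduces to checking the displayed ODE at $\lambda=1$, which is a short algebraic verification using the explicit form of $V$. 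The main obstacle throughout is the faithful transfer of the mode analysis from the bounded lightcone setting of \cite{CosDonGlo17} to the global $\mathbb{R}^n$ framework: one must verify that no new eigenvalues are created by replacing the cone boundary condition with the decay-at-infinity condition, and that the Frobenius exponents at $r=\infty$ are strictly compatible with the weighted $\dot{H}^s\cap\dot{H}^k$-summability required by Lemma \ref{Lem:Decay_functions}.
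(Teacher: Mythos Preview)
Your strategy for part (i) is essentially the paper's: invoke the spectral mapping result from \cite{Glo21}, reduce the eigenvalue equation to a radial ODE, use the Sobolev embedding to force enough regularity at $r=0$ and $r=1$, and then quote \cite{CosDonGlo17} to exclude $\Re\lambda\ge 0$, $\lambda\neq 1$. One technical point: the paper substitutes $w(r)=r\,\tilde{u}_1(r)$ rather than working with $\tilde{u}_1$ directly, because it is the ODE for $w$ (with first-order coefficient $\tfrac{n-3}{r}$ and a different potential $\tilde V$) that matches exactly the equation treated in \cite{CosDonGlo17}, Theorem~2.2. You would need to perform this change of variable, or verify the equivalence, before the citation closes the argument. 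Also, the step ``$v$ must extend smoothly across $r=1$'' deserves more care: the paper reads off the Frobenius indices $\{0,\tfrac{n-3}{2}-\lambda\}$ at $r=1$ and uses that an eigenfunction in $\mc H_{s,k}$ lies in $C^{\lfloor n/2\rfloor}(0,\infty)$, which is strictly more regularity than the non-analytic branch can carry when $\Re\lambda\ge 0$.

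The genuine gap is in your treatment of algebraic simplicity in (ii). You assert that the inhomogeneous problem arising from a putative Jordan chain ``is ruled out in \cite{CosDonGlo17}''. The paper does not, and cannot, quote \cite{CosDonGlo17} directly for this. Instead it writes down the equation $(\mb I-\mb L)\mb u=-\mb g$ explicitly, builds a fundamental system via reduction of order from the known solution $g_1(r)=(r^2+n-4)^{-1}$, and carries out a variation-of-constants analysis. One must then show that the resulting particular solution cannot be made $C^{\lfloor n/2\rfloor}$ at $r=1$; this requires a case split ($n=5$ versus $n\ge 6$) and, in the latter case, a nontrivial reduction of a specific integral term to a \emph{different} homogeneous ODE (a supersymmetric partner equation), for which \cite{CosDonGlo17}, Theorem~4.1 --- not Theorem~2.2 --- supplies the nonexistence of $C^\infty[0,1]$ solutions. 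This chain of reductions is the bulk of the paper's proof of (ii) and is not delivered by the lightcone mode analysis you invoke.
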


 \begin{proof}
 First, according to \cite{Glo22}, Theorem B.1,  we have that 
\begin{equation*}
	\{ \la \in \sigma(\bm L) : \Re \la > \tfrac{n}{2}-1-s \} \subseteq \sigma_p(\bm L).
\end{equation*}   
Furthermore, by the same theorem, to show (i) it is enough to prove that 
\begin{equation}\label{Eq:S_0_empty}
	S_0:=\{ \la \in \sigma_p(\bm L) : \Re \la \geq 0, \la \neq 1 \} = \emptyset.
\end{equation}
 To show this, we assume the contrary, that there exist $\la \in S_0$ and $\bm u \in \mc D(\bm L)$ such that
$
 	\bm L\bm u = \la \bm u.
$
This means that the first component of $\bm u$ satisfies the following equation
\begin{equation}\label{Eq:Spectral_PDE}
	\Delta u_1 - \Lambda^2 u_1 - (2 \la +3)\Lambda u_1 -(\la+1)(\la + 2)u_1 + Vu_1=0
\end{equation}
 weakly on $\R^n$. Note that by Sobolev embedding we have that $\tilde{u}_1 \in C[0,\infty) \cap  C^{\lfloor \frac{n}{2} \rfloor}(0,\infty)$, where $u_1=\tilde{u}_1(|\cdot|)$. Therefore, the function $v:=(\cdot)\tilde{u}_1$ also belongs to $C[0,\infty) \cap  C^{\lfloor \frac{n}{2} \rfloor}(0,\infty)$, and, according to \eqref{Eq:Spectral_PDE}, satisfies on $(0,\infty)$ the  following ODE
\begin{equation}\label{Eq:ODE_spectral}
	(1-r^2)v''(r)+\left(\tfrac{n-3}{r}-2(\la+1)r\right)v'(r)-\la(\la+1)v(r)-\tilde{V}(r)v(r)=0,
\end{equation}
where
\begin{equation*}
	\tilde{V}(r)=\frac{(n-3)[r^4-6(n-4)r^2+(n-4)^2]}{r^2(r^2+n-4)^2}.
\end{equation*}
Now, observe that the sets of Frobenius indices of \eqref{Eq:ODE_spectral} at $r=0$ and $r=1$ are $\{1,3-n\}$  and $\{ 0,\frac{n-3}{2}-\la \}$. Then, since $v \in C[0,\infty) \cap C^{\lfloor \frac{n}{2} \rfloor}(0,\infty)$, Frobenius theory implies that $v$ must be analytic at both $r=0$ and $r=1$; in particular, $v \in C^\infty[0,1]$. However, in \cite{CosDonGlo17}, Theorem 2.2, it is proven that \eqref{Eq:ODE_spectral} does not admit such solutions for $\la \in S_0$, a contradiction. Therefore \eqref{Eq:S_0_empty} holds, and the claim (i) follows.
 
 Now we prove (ii). 
First, we note that $\bm g$ from \eqref{Def:g} satisfies the assumptions of Lemma \ref{Lem:Decay_functions}. Then, by a straightforward calculation we get that $(\widetilde{\bm L}_0 + \bm L') \bm g =\bm g$. Therefore, $1 \in \sigma_p(\bm L)$. To study the multiplicity of this eigenvalue, we define the Riesz projection 
\begin{equation*}
	\mb P:= \frac{1}{2\pi i}\int_{\gamma}\mb R_{\mb L}(\la) \, d\la,
\end{equation*}
where $\gamma$ is a positively oriented circle centered at $1$ with radius $r_\gamma < 1$. From the definition of $\bm P$ we have that $\bm P \bm g = \bm g$, and therefore $\inner{\bm g} \subseteq \rg \bm P$. To establish (ii), it remains to prove the reversed inclusion. Since $\rg \bm P$ is finite dimensional (see \cite{Glo22}, Theorem B.1) it is enough to show that there does not exist $\bm u \in \mc D(\bm L)$ such that $(\bm I - \bm L)\bm u = -\bm g$. We assume the contrary. This implies that the radial profile $\tilde{u}_1$ of the first component $u_1$ belongs to $C[0,\infty) \cap C^{\lfloor \frac{n}{2} \rfloor}(0,\infty)$ and satisfies on $(0,\infty)$ the following equation
\begin{equation}\label{Eq:ODE_la_1}
	(1-r^2)\tilde{u}_1''(r)+\left( \tfrac{n-1}{r}-6r\right)\tilde{u}_1'(r)-6\tilde{u}_1(r)+V_1(r)\tilde{u}_1(r)=G(r),
\end{equation}
where $V_1(r)=\tfrac{8(n-3)(n-4)}{(r^2+n-4)^2}$ is the radial profile of $V$, and 
\begin{equation*}
	G(r)=2rg_1'(r)+5g_1(r)= \frac{r^2+5n-20}{(r^2+n-4)^2},
\end{equation*} 
where $g_1(r)=\tfrac{1}{(r^2+n-4)^{2}}$ is the radial profile of $g$. Since $g_1$ solves the homogeneous version of \eqref{Eq:ODE_la_1}, by reduction of order we obtain another solution 
\begin{equation}\label{Def:g_tilde}
	\tilde{g}_1(r):=g_1(r)\int_{r_1}^{r}\frac{(1-x^2)^{\frac{n-7}{2}}}{x^{n-1}}\frac{dx}{g_1(x)^2},
\end{equation}
where $r_1$ is chosen arbitrarily from $(0,1).$ Note that $\tilde{g}_1(r)=\frac{A(r)}{r^{n-2}}$ for some $A$ which is analytic at $r=0$, with $A(0)\neq 0$. In particular, $\{g_1,\tilde{g}_1 \}$ is a fundamental system for (the homogeneous version of) \eqref{Eq:ODE_la_1} on $(0,1)$, and by the variation of constants formula we get that 
 	\begin{equation}\label{Eq:v_var_con}
 	\tilde{u}_1(r)= c_1 g_1(r) + c_2 \tilde{g}_1(r) + \tilde{g}_1(r) \int_{0}^{r} \frac{g_1(x)G(x)x^{n-1}}{(1-x^2)^{\frac{n-5}{2}}}dx - g_1(r) \int_{0}^{r} \frac{\tilde{g}_1(x)G(x)x^{n-1}}{(1-x^2)^{\frac{n-5}{2}}}dx
 \end{equation}
for a choice of constants $c_1,c_2$. Due to the strong singularity of $\tilde{g}_1$ at $r=0$, we conclude that $c_2=0$. Now we separately treat cases $n=5$ and $n \geq 6$.

For $n=5$ we choose $r_1=\frac{1}{2}$ in \eqref{Def:g_tilde}. Then $\tilde{g}_1(r) \simeq \ln(1-r)$ near $r=1$. Consequently, from \eqref{Eq:v_var_con} it follows that $\tilde{u}_1(r) \simeq \ln(1-r)$, unless the second integral in \eqref{Eq:v_var_con} is zero for $r=1$. But this is not the case since the integrand is strictly positive on $(0,1)$. We therefore conclude that there is no choice of $c_1,c_2$ in \eqref{Eq:v_var_con} that yields a solution that is continuous on $[0,1]$, which is in contradiction with $\tilde{u}_1$ belonging to $C[0,1]$.

If $n \geq 6$ then we take $r_1=1$ in \eqref{Def:g_tilde}. Since $g_1$ does not vanish at $r=1$, we have that 
\begin{equation*}
	\tilde{g}_1(r)=(1-r^2)^{\frac{n-5}{2}}B(r)
\end{equation*}
where $B$ is analytic near $r=1$, and $B(1)\neq 0$. As a consequence, we see that the last term in \eqref{Eq:v_var_con} is analytic at both $r=0$ and $r=1$. It remains to understand the regularity of the middle term
\begin{equation}\label{Def:I_n}
	\mc I_n(r):= \tilde{g}_1(r)\int_{0}^{r} \frac{g_1(x)G(x)x^{n-1}}{(1-x^2)^{\frac{n-5}{2}}}dx.
\end{equation}
In what follows, we show that $\mc I_n$ is not analytic at $r=1$. This implies that
\begin{equation*}
	\mc I_n(r) \simeq 
	\begin{cases}
		~(1-r)^{\frac{n-5}{2}}  &\text{if }  n \text{ is even}, \\
		~(1-r)^{\frac{n-5}{2}}\ln(1-r)  ~ &\text{if } n \text{ is odd},
	\end{cases}
\end{equation*}
near $r=1$. Then in both cases we get a contradiction with $\tilde{u}_1 \in C^{\lfloor \frac{n}{2} \rfloor}(0,1]$. For simplicity, we denote $m = \frac{n-3}{2}$. Then, we have that
\begin{align}
	\mc I_n(r)&=\tilde{g}_1(r)\int_{0}^{r}\frac{x^{2m+2}}{(1-x^2)^{m-1}}\big(  2xg_1'(x)g_1(x)+5g_1(x)^2  \big)dx \nonumber \\
	&=B(r)(1-r^2)^{m-1} \int_{0}^{r}\frac{x^{2m-2}}{(1-x^2)^{m-1}}\frac{d}{dx}\big(  x^5g_1(x)^2  \big)dx \nonumber \\
	&=B(r) \left( r^{2m+3}g_1(r)^2-2(m-1)(1-r^2)^{m-1}\int_{0}^{r}\frac{x^{2m+2}}{(1-x^2)^{m}}g_1(x)^2dx \right). \label{Eq:I_n}
\end{align} 
We now prove that the term involving the integral in \eqref{Eq:I_n} is not analytic at $r=1$. For this, we will again resort to \cite{CosDonGlo17}. First, we note that the function
\begin{equation}\label{Def:v_tilde}
	\tilde{v}(r):=\frac{(1-r^2)^{m-1}}{r^{2m+1}g_1(r)}\int_{0}^{r}\frac{x^{2m+2}}{(1-x^2)^m}g_1(x)^2\, dx
\end{equation}
solves the equation
\begin{equation}\label{Eq:SUSY}
	(1-r^2)\tilde{v}''(r)+\left(\frac{2m}{r}-4r \right)\tilde{v}'(r)-2\tilde{v}(r)+\frac{2(2m-1)(r^2-2m-1)}{r^2(r^2+2m-1)}\tilde{v}(r)=0.
\end{equation}
For a detailed derivation of \eqref{Def:v_tilde} as a solution of \eqref{Eq:SUSY}, see \cite{ChaDonGlo17}, Appendix A. From \eqref{Def:v_tilde} it is immediate that $\tilde{v} \in C^\infty[0,1)$, and since according to \cite{CosDonGlo17}, Theorem 4.1, equation \eqref{Eq:SUSY} does not admit $C^\infty[0,1]$ solutions, it follows that $\tilde{v}$ is not analytic at $r=1$. Consequently, from \eqref{Eq:I_n} it follows that $\mc I_n $ is not analytic at $r=1$, and we are done.
 \end{proof}

Now, as another direct consequence of \cite{Glo22}, Theorem B.1, we obtain the exponential decay of $\mb S(\tau)$ on the kernel of the projection $\mb P$.

\begin{proposition}[Exponential decay on the stable subspace]\label{Prop:Decay_stable}
	Assume \eqref{Eq:Conds_n_geq_5} and let $\omega$  be from Proposition \ref{Prop:Pert_semigroup}. Then there exists $C \geq 1$ such that
	\begin{equation}\label{Eq:Decay_stable}
		\norm{\bm S(\tau)(\bm I-\bm P)\bm u}_{\mc H^{s,k}} \leq C e^{-\omega\tau}\norm{(\bm I-\bm P)\bm u}_{\mc H^{s,k}}
	\end{equation}
for all $\bm u \in \mc H^{s,k}$ and $\tau \geq 0$.
\end{proposition}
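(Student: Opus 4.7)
The plan is to invoke the abstract result \cite{Glo21}, Theorem B.1, whose hypotheses are exactly the ones we have just verified: $\mathbf{L}_0$ generates a semigroup with growth bound at most $\tfrac{n}{2}-1-s < 0$ (Proposition \ref{Prop:Free_semigroup}), $\mathbf{L}' \in \mc B(\mc H_{s,k})$ is compact (the corollary preceding Proposition \ref{Prop:S}), and $\mathbf{L} = \mathbf{L}_0 + \mathbf{L}'$ generates $\mathbf{S}(\tau)$. The strategy is to decompose $\mc H_{s,k}$ along the spectral projection $\mathbf{P}$ and read off decay from the spectral bound on the stable part.

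First I would observe the standard fact that the Riesz projection $\mathbf{P}$ associated to the isolated spectral point $\la = 1$ commutes with both the resolvent $\mathbf{R}_{\mathbf{L}}(\la)$ (for $\la$ outside the enclosed region) and hence with $\mathbf{L}$ itself, and consequently with $\mathbf{S}(\tau)$ for all $\tau \geq 0$. Therefore the splitting $\mc H_{s,k} = \rg \mathbf{P} \oplus \ker \mathbf{P}$ is invariant under the flow, and one may consider the part semigroup $(\mathbf{S}(\tau)|_{\ker \mathbf{P}})_{\tau \geq 0}$, whose generator is $\mathbf{L}|_{\ker \mathbf{P}}$.

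Next, by the spectral gap property Proposition \ref{Prop:Pert_semigroup} (i), together with the standard formula $\sigma(\mathbf{L}|_{\ker \mathbf{P}}) = \sigma(\mathbf{L}) \setminus \{1\}$, we have
\begin{equation*}
\sigma(\mathbf{L}|_{\ker \mathbf{P}}) \subseteq \{ \la \in \mathbb{C} : \Re \la < -\omega \}.
\end{equation*}
Applying \cite{Glo21}, Theorem B.1, to the part semigroup now yields that its growth bound equals its spectral bound, so it is at most $-\omega$. Translating this into the semigroup estimate provides a constant $C \geq 1$ with
\begin{equation*}
\norm{\mathbf{S}(\tau) \mathbf{v}}_{\mc H_{s,k}} \leq C e^{-\omega \tau} \norm{\mathbf{v}}_{\mc H_{s,k}}, \qquad \mathbf{v} \in \ker \mathbf{P}, \ \tau \geq 0.
\end{equation*}
Setting $\mathbf{v} = (\mathbf{I} - \mathbf{P})\mathbf{u}$ and using $\mathbf{S}(\tau)(\mathbf{I}-\mathbf{P}) = (\mathbf{I}-\mathbf{P})\mathbf{S}(\tau)$ gives \eqref{Eq:Decay_stable}.

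There is no real obstacle here: all the hard work (the Lumer--Phillips argument for $\mathbf{L}_0$, the compactness of $\mathbf{L}'$, and the spectral analysis ruling out unstable eigenvalues other than $\la = 1$ and showing its simplicity) has already been done. The only point one should be mildly careful about is confirming that Theorem B.1 of \cite{Glo21} applies to the restricted operator $\mathbf{L}|_{\ker \mathbf{P}}$; this is either contained directly in the statement or follows from applying the theorem to $\mathbf{L}$ and then using invariance of $\ker \mathbf{P}$ under the semigroup to pass to the part.
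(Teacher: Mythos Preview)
Your proposal is correct and matches the paper's approach exactly: the paper states this proposition as ``another direct consequence of \cite{Glo21}, Theorem B.1'' without further elaboration, and your argument simply unpacks what that citation entails. If anything, your write-up is more detailed than the paper's one-line justification.
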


\section{Estimates of the nonlinearity}\label{Sec:Est_nonlin}
\noindent With the linear theory at hand, we now turn to the analysis of the full nonlinear system \eqref{Eq:Vector_pert}. In the rest of the paper, we will only consider real-valued functions from $\mc H^{s,k}$. Also, for simplicity, we will use the same notation, $\mc H^{s,k}$, for the corresponding real subspace. In addition, from now on we will drop the subscript in $\norm{\cdot}_{\mc H^{s,k}}$, and always assume that an unspecified norm corresponds to $\mc H^{s,k}$.

 In what follows, we show that for a range of exponents $s,k$, the nonlinear operator $\mb N$ is locally Lipschitz continuous on $\mc H^{s,k}$. This will be essential later on for the construction of strong $\mc H^{s,k}$-solutions to \eqref{Eq:Vector_pert}.
 For the statement of the ensuing lemma, we also need the following notation
\begin{equation*}
	\mc B_{\delta}:=\{ \bm u \in \mc H^{s,k} : \norm{\bm u} \leq \delta \}. 
\end{equation*}

\begin{lemma}\label{Lem:Nonlin_est}
Let $\delta >0$. Also, assume that
	\begin{equation}\label{Eq:Nonlin_conds}
	n \geq 5, \quad	\tfrac{n}{2}-1 < s \leq \tfrac{n}{2}-1+\tfrac{1}{2(n-2)}, \quad  k > n, \quad  k \in \mathbb{N}.
	\end{equation}
Then the operator $\bm N$ from \eqref{Def:N} maps $\mc H^{s,k}$ into itself, and we have that
\begin{equation}\label{Eq:Nonlin_est}
	\norm{\bm N(\bm u) - \bm N(\bm v)} \lesssim (\norm{\bm u} + \norm{\bm v})\norm{\bm u - \bm v},
\end{equation}
for all $\bm u,\bm v \in \mc B_\delta$.
\end{lemma}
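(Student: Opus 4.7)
The plan is to exploit the quadratic vanishing of $\mb N$ at the origin through an explicit factorization $N(\xi, u_1) = u_1^2\, F(\xi, u_1)$, and then to reduce the Lipschitz estimate in $\dot H^{s-1}\cap \dot H^{k-1}(\R^n)$ to a product bound controlled by the Schauder estimate of Proposition \ref{Prop:Schauder} together with the weighted radial decay of Proposition \ref{Prop:Strauss}. Taylor's formula with integral remainder yields $\eta(|\xi|\phi_0 + |\xi|u_1) - \eta(|\xi|\phi_0) - \eta'(|\xi|\phi_0)|\xi|u_1 = |\xi|^2 u_1^2 \int_0^1 (1-t)\eta''(|\xi|\phi_0 + t|\xi|u_1)\,dt$, and writing $\eta''(y) = 4\sin(2y) = 8y\,\sigma(2y)$ with $\sigma(z):=\sin(z)/z$ entire analytic, the three powers of $|\xi|$ in the prefactor $(n-3)/(2|\xi|^3)$ cancel exactly to produce
\[
N(\xi, u_1) = u_1^2\,F(\xi, u_1), \quad F(\xi, u_1) := 4(n-3)\int_0^1 (1-t)\bigl(\phi_0(\xi) + t u_1\bigr)\,\sigma\bigl(2|\xi|(\phi_0(\xi) + t u_1)\bigr)\,dt,
\]
where $F$ is smooth in $(\xi, u_1) \in \R^n \times \R$ and both $\phi_0$ and each of its derivatives decay like $\inner{\xi}^{-1-|\alpha|}$ by inspection of \eqref{Def:BB_sol}.

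\textbf{Reduction.} From the identity
\[
u_1^2 F(\xi, u_1) - v_1^2 F(\xi, v_1) = (u_1 - v_1)(u_1 + v_1) F(\xi, u_1) + v_1^2(u_1 - v_1)\int_0^1 \partial_u F\bigl(\xi, v_1 + t(u_1 - v_1)\bigr)\,dt,
\]
the desired estimate \eqref{Eq:Nonlin_est} reduces to bounds of the schematic form $\|w_1 w_2 H(\xi, w_3)\|_{\dot H^{s-1}\cap\dot H^{k-1}} \lesssim \|w_1\|\|w_2\|$, with $w_1, w_2, w_3$ in a bounded set of $\dot H^s \cap \dot H^k(\R^n)$ and $H \in \{F, \partial_u F\}$. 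The high-regularity factor $\dot H^{k-1}$ is standard once $k - 1 > \tfrac n2$: a Moser-type inequality reduces control of $\|w_1 w_2 H\|_{\dot H^{k-1}}$ to $L^\infty$ bounds, provided by the embedding of Lemma \ref{Lem:L_infty_embedding}, multiplied against $\dot H^{k-1}$ bounds on the individual factors, with $H(\xi, w_3)$ treated via the Schauder estimate of Proposition \ref{Prop:Schauder}.

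\textbf{Low-regularity bound and main obstacle.} The delicate part is the $\dot H^{s-1}$ factor: since $s - 1$ sits just above the scale-critical value $\tfrac n2 - 1$, pointwise multiplication is not bounded there. The resolution is to peel off one of $w_1, w_2$ as an $L^\infty$-multiplier via Lemma \ref{Lem:L_infty_embedding} (valid since $k > \tfrac n2$), bound the remaining $\dot H^{s-1}$-factor directly, and control $H(\xi, w_3)$ in $\dot H^{s-1}$ via Proposition \ref{Prop:Schauder}, whose hypotheses rely crucially on the weighted radial $L^\infty$ estimate of Proposition \ref{Prop:Strauss} to absorb the spatial $\xi$-dependence of $F$. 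The numerical condition $\tfrac n2 - 1 < s \leq \tfrac n2 - 1 + \tfrac{1}{2(n-2)}$ is exactly the window in which this composition closes, encoding a tight trade-off between the fractional-derivative count at the critical scaling and the radial-decay exponent supplied by the Strauss-type bound; this composition estimate at near-critical regularity is the principal technical obstacle. Once it is in place, the uniform bound $\|\bm u\|, \|\bm v\|\leq \delta$ absorbs the $\|w_3\|$-dependence of $H$, and the quadratic prefactor $(u_1 + v_1) + v_1^2 \int \partial_u F$ produces the desired right-hand side $(\|\bm u\|+\|\bm v\|)\|\bm u - \bm v\|$.
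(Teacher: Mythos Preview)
Your factorization $N(\xi,u_1)=u_1^2 F(\xi,u_1)$ and the subsequent difference identity are fine, but the execution you outline for the $\dot H^{s-1}$ bound does not close, and it mischaracterizes what Proposition~\ref{Prop:Schauder} actually provides.

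The Schauder estimate is not a composition bound on a function $H(\xi,w_3)$ by itself; it is a joint product--composition bound on expressions of the exact shape $u_1u_2u_3\,F(|\cdot|v)$ with three $\dot H^s\cap\dot H^k$ factors and $F$ an even function of one real variable with bounded derivatives. Your scheme keeps only two explicit factors $w_1,w_2$ and treats $H(\xi,w_3)$ as a black box to be bounded separately in $\dot H^{s-1}$. That separation fails: the $L^\infty$--peeling step ``$\|w_1 w_2 H\|_{\dot H^{s-1}}\lesssim \|w_1\|_{L^\infty}\|w_2 H\|_{\dot H^{s-1}}$'' is not valid because multiplication by an $L^\infty$ function does not preserve $\dot H^{s-1}$ for $s-1>0$; the fractional Leibniz rule would demand a term $\|w_1\|_{\dot H^{s-1}}\|w_2H\|_{L^\infty}$, and $\|w_1\|_{\dot H^{s-1}}$ is \emph{not} controlled by $\|w_1\|_{\dot H^s\cap\dot H^k}$ (the intersection only interpolates upward, not below $s$). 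The same obstruction blocks any attempt to bound $H(\xi,w_3)$ alone in $\dot H^{s-1}$ via Proposition~\ref{Prop:Schauder}. (Incidentally, $s-1$ lies just above $\tfrac n2-2$, not $\tfrac n2-1$.)

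The paper's route avoids this entirely by applying the fundamental theorem of calculus three times to $f(a+c)-f(a+b)-f'(a)(c-b)$ with $f=\eta$, $a=|\xi|\phi_0$, $b=|\xi|v_1$, $c=|\xi|u_1$, which produces directly
\[
N(\xi,u_1)-N(\xi,v_1)=\tfrac{n-3}{2}\int_0^1\!\!\int_0^1\!\!\int_0^1(u_1-v_1)\bigl(v_1+x(u_1-v_1)\bigr)\bigl(\phi_0+y(\ldots)\bigr)\,\eta'''\bigl(z|\xi|(\ldots)\bigr)\,dz\,dy\,dx,
\]
i.e.\ exactly three factors from $\dot H^s\cap\dot H^k$ multiplied by $\eta'''(|\xi|w)=8\cos(2|\xi|w)$, which is even with bounded derivatives. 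Proposition~\ref{Prop:Schauder} then applies in one stroke to give the full $\dot H^{s-1}\cap\dot H^{k-1}$ bound. Your decomposition can in fact be salvaged along these lines: the missing third factor is hidden inside your $F$ (it is $\phi_0+tu_1$), and if you expand $F$ and $\partial_u F$ explicitly (noting $\sigma(y)+y\sigma'(y)=\cos y$), both of your terms take the required three-factor form. But as written, the ``peel off in $L^\infty$'' strategy is a genuine gap.
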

\begin{proof}
 We prove the estimate \eqref{Eq:Nonlin_est} for $\mb u, \mb v \in C^\infty_{c,\text{rad}}(\R^n) \times C^\infty_{c,\text{rad}}(\R^n)$ as the claim then follows by density and the $L^\infty$-embedding. The proof relies on a Schauder estimate, which, due to its generality and anticipated broad applicability, we formulate and prove separately in Appendix \ref{Sec:Schauder}; see Proposition \ref{Prop:Schauder}. To set the stage for the application of this estimate, we first establish an auxiliary identity. Namely, if $f \in C^3(\R)$ with $f''(0)=0$, then, given $a,b,c \in \R$, by repeated application of the fundamental theorem of calculus we have that
	\begin{align*}
		f(a+&c) - f(a+b) - f'(a)(c-b) =\\
		& = \int_{a+b}^{a+c} f'(t)dt - \int_{0}^{c-b}f'(a)dt 
		= (c-b)\int_{0}^{1}\left(f'\big(a+b+x(c-b)\big)-f'(a)\right)dx \\
		& = (c-b)\int_{0}^{1}\big(b+x(c-b)\big)\int_{0}^{1}f''\left(a+y\big(b+x(c-b)\big) \right)dy\, dx \\
		& =(c-b)\int_{0}^{1} \big(b+x(c-b)\big) \int_{0}^{1} \left( a+ y\big(b+x(c-b)\big) \right) 
		\cdot \\ & \hspace{6.4cm} \cdot \int_{0}^{1}  f'''\left( z \big(a+ y(b+x(c-b))\big) \right)dz \,dy\, dx. 
		\end{align*}
	According to this identity, for the second component of $\bm N(\bm u) - \bm N(\bm v)$  we have
	\begin{align}
		N(\xi,&u_1)-N(\xi,v_1)= \nonumber \\ &=\frac{n-3}{2|\xi|^3}\Big(\eta\big(|\xi|(\phi_0+u_1)\big)-\eta\big(|\xi|(\phi_0+v_1)\big) - \eta'(|\xi|\phi_0)|\xi|(u_1-v_1) \Big) \nonumber \\
		& 
		= \frac{n-3}{2}\int_{0}^{1}\int_{0}^{1}\int_{0}^{1} (u_1-v_1)\big(v_1+x(u_1-v_1)\big)
			\left( \phi_0 + y\big(v_1+x(u_1-v_1)\big) \right) \cdot \nonumber \\ &  \hspace{5.6cm}  \cdot \eta'''\left( z |\xi| \big(\phi_0+ y(v_1+x(u_1-v_1))\big) \right)dz \,dy\, dx. \label{Eq:N_u-v}
	\end{align}
	Now, since $F(x):=\eta'''(x)=8 \cos(2x)$ satisfies the assumptions of Proposition \ref{Prop:Schauder}, the estimate \eqref{Eq:Nonlin_est} follows from \eqref{Eq:N_u-v} and \eqref{Eq:Schauder_estimate}. 
\end{proof}

\section{Construction of strong solutions}\label{Sec:Strong_sol}
\noindent With the nonlinear estimate \eqref{Eq:Nonlin_est} at hand, we are in the position to construct strong solutions to \eqref{Eq:Vector_pert}. For convenience, we copy here the underlying Cauchy problem
\begin{equation}\label{Eq:Vector_pert_2}
	\begin{cases}
		~\partial_\tau \Phi(\tau) = \bm L\Phi(\tau) + \bm N(\Phi(\tau)),\\
		~\Phi(0)=\bm U(\bm v, T).
	\end{cases}	
\end{equation}
 To analyze \eqref{Eq:Vector_pert_2}, we utilize the standard techniques from dynamical systems theory. First, we use the fact that $\bm L$ generates the semigroup $\bm S(\tau)$, to recast \eqref{Eq:Vector_pert_2} into the integral form
\begin{equation}\label{Eq:Duhamel}
	\Phi(\tau)=\mb S(\tau)\mb U(\mb v,T) + \int_{0}^{\tau}\mb S(\tau-s)\mb N(\Phi(s))ds.
\end{equation} 
Then, as $\bm S(\tau)$ decays exponentially on $\ker \mb P$, we employ a  fixed point argument to show existence of global decaying solutions for small initial data. To deal with growth caused by the presence of $\rg \mb P$ in the initial data, we use a Lyapunov-Perron type argument to suppress the growing mode by appropriately choosing the blowup time. 

We now make some technical preparations. First, we introduce the Banach space
\begin{equation*}
	\mc X := \{ \Phi \in C([0,\infty),\mc H^{s,k}) : 
	\| \Phi  \|_{\mc X} := \sup_{\tau>0}e^{\omega\tau}\|\Phi(\tau)\| < \infty
	\}, 
\end{equation*}
where $\omega$ is from Proposition \ref{Prop:Pert_semigroup}. 
Then, for $\mb u \in \mc H^{s,k}$ and $\Phi \in \mc X$
we define the correction term
\begin{equation}\label{Def:Correction_term}
	\mb C(\mb u, \Phi):= \mb P \left( \mb u + \int_{0}^{\infty}e^{- s}\mb N(\Phi(s))ds \right),
\end{equation}
and a map $\bm K_{\bm u} : \mc X \rightarrow C([0,\infty),\mc H^{s,k})$ by
\begin{equation*}
	\mb K_{\mb u}( \Phi)(\tau) := \mb S(\tau)\big(\mb u - \mb C(\mb u, \Phi) \big) + \int_{0}^{\tau}\mb S(\tau-s)\mb N(\Phi(s))ds.
\end{equation*}
We also need the following definition
\begin{equation*}
	\mc X_\delta := \{ \Phi \in \mc X: \| \Phi \|_{\mc X} \leq \delta \}.
\end{equation*}
Also, for the rest of the paper, we assume that \eqref{Eq:Nonlin_conds} holds.
\begin{proposition}\label{Prop:K}
	For all  sufficiently small $\delta>0$ and all  sufficiently large $C > 0$ the following holds. If ${\bf u} \in \mc B_{\delta/C}$ then there exits a unique $\Phi = \Phi({\bf u}) \in \mc X_\delta$ for which
	\begin{equation}\label{Eq:K(u,phi)}
		\Phi = {\bf K}_{\bf u}( \Phi).
	\end{equation}
	Furthermore, the map ${\bf u} \mapsto \Phi({\bf u}): \mc B_{\delta/C} \rightarrow \mc X$ is  Lipschitz continuous. 
		\end{proposition}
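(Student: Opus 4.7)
The plan is a standard Lyapunov--Perron contraction argument tailored to the single unstable direction $\rg \mb P$. The correction $\mb C(\mb u, \Phi) \in \rg \mb P$ is designed precisely to neutralize the $e^\tau$ growth of $\mb S(\tau)$ on the eigenspace corresponding to $\la = 1$. Since the Riesz projection $\mb P$ commutes with $\mb S(\tau)$, since $\mb S(\tau)|_{\rg \mb P}$ acts as multiplication by $e^\tau$, and since $\mb P^2 = \mb P$, a direct computation of $\mb P\mb K_{\mb u}(\Phi)(\tau)$ produces the exact cancellation
\begin{equation*}
	\mb P \, \mb K_{\mb u}(\Phi)(\tau) = -\int_\tau^\infty e^{\tau - s} \, \mb P\, \mb N(\Phi(s))\,ds,
\end{equation*}
while $\mb P\mb C(\mb u, \Phi) = \mb C(\mb u, \Phi)$ yields
\begin{equation*}
	(\mb I - \mb P)\, \mb K_{\mb u}(\Phi)(\tau) = \mb S(\tau)(\mb I - \mb P)\mb u + \int_0^\tau \mb S(\tau-s)(\mb I - \mb P)\mb N(\Phi(s))\,ds.
\end{equation*}
These two identities are the heart of the argument and reduce everything to routine exponential-decay estimates.

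For the self-mapping step, I take $\Phi \in \mc X_\delta$ and apply Lemma \ref{Lem:Nonlin_est} with $\mb v = \bm 0$ (noting $\mb N(\bm 0) = \bm 0$) to obtain $\|\mb N(\Phi(s))\| \lesssim \|\Phi(s)\|^2 \leq \delta^2 e^{-2\omega s}$. Inserting this into the first identity and integrating $\int_\tau^\infty e^{\tau-s}e^{-2\omega s}ds \simeq e^{-2\omega\tau}$ gives $\|\mb P \, \mb K_{\mb u}(\Phi)(\tau)\| \lesssim \delta^2 e^{-\omega\tau}$. In the second identity, Proposition \ref{Prop:Decay_stable} bounds $\|\mb S(\tau)(\mb I - \mb P)\mb u\| \lesssim e^{-\omega\tau}\|\mb u\| \leq (\delta/C)e^{-\omega\tau}$, and the same decay inequality under the Duhamel integral yields $\lesssim \delta^2 e^{-\omega\tau}$. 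Combining, $\|\mb K_{\mb u}(\Phi)\|_{\mc X} \leq c_1\delta/C + c_2\delta^2$, which is $\leq \delta$ for $C$ large and $\delta$ small. The contraction step is analogous: subtracting $\mb K_{\mb u}(\Phi) - \mb K_{\mb u}(\Psi)$ eliminates the $\mb S(\tau)\mb u$ contribution, and the remainder depends on the inputs only through $\mb N(\Phi(s))-\mb N(\Psi(s))$, which by Lemma \ref{Lem:Nonlin_est} satisfies $\|\mb N(\Phi(s))-\mb N(\Psi(s))\| \lesssim \delta e^{-2\omega s}\|\Phi-\Psi\|_{\mc X}$. The same two decay estimates then give $\|\mb K_{\mb u}(\Phi) - \mb K_{\mb u}(\Psi)\|_{\mc X} \lesssim \delta\|\Phi-\Psi\|_{\mc X}$, a strict contraction for small $\delta$, so Banach's fixed-point theorem delivers the unique $\Phi(\mb u) \in \mc X_\delta$ with $\Phi(\mb u) = \mb K_{\mb u}(\Phi(\mb u))$.

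Lipschitz dependence is particularly clean because the nonlinear integrals cancel when comparing two correction terms at the same $\Phi$, giving $\mb C(\mb u_1, \Phi) - \mb C(\mb u_2, \Phi) = \mb P(\mb u_1 - \mb u_2)$. Consequently
\begin{equation*}
	\mb K_{\mb u_1}(\Phi)(\tau) - \mb K_{\mb u_2}(\Phi)(\tau) = \mb S(\tau)(\mb I - \mb P)(\mb u_1 - \mb u_2),
\end{equation*}
whose $\mc X$-norm is $\lesssim \|\mb u_1 - \mb u_2\|$ by Proposition \ref{Prop:Decay_stable}. Splitting $\Phi(\mb u_1) - \Phi(\mb u_2) = [\mb K_{\mb u_1}(\Phi(\mb u_1)) - \mb K_{\mb u_1}(\Phi(\mb u_2))] + [\mb K_{\mb u_1}(\Phi(\mb u_2)) - \mb K_{\mb u_2}(\Phi(\mb u_2))]$, the contraction estimate bounds the first bracket and the identity above bounds the second, yielding $\|\Phi(\mb u_1) - \Phi(\mb u_2)\|_{\mc X} \lesssim \delta\|\Phi(\mb u_1) - \Phi(\mb u_2)\|_{\mc X} + \|\mb u_1 - \mb u_2\|$, whence the claim after absorbing the first term on the left.

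The one point requiring genuine care is the ``future integral'' in the first displayed identity, where one must see that the quadratic decay of $\mb N(\Phi(s))$ (coming from $\Phi \in \mc X_\delta$) beats the exponentially \emph{growing} prefactor $e^\tau$; this is what forces the use of $\mc X_\delta$ rather than merely bounded curves and is the reason the correction term must be defined as an integral from $0$ to $\infty$. Once this cancellation is in place, the rest of the proof is classical bookkeeping.
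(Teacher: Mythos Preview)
Your proof is correct and follows essentially the same approach as the paper: both derive the identical splitting
\[
\mb K_{\mb u}(\Phi)(\tau) = \mb S(\tau)(\mb I - \mb P)\mb u + \int_0^\tau \mb S(\tau-s)(\mb I - \mb P)\mb N(\Phi(s))\,ds - \int_\tau^\infty e^{\tau-s}\mb P\mb N(\Phi(s))\,ds,
\]
combine Proposition~\ref{Prop:Decay_stable} with Lemma~\ref{Lem:Nonlin_est} to obtain the self-mapping and contraction bounds, and then use the observation $\mb K_{\mb u_1}(\Phi) - \mb K_{\mb u_2}(\Phi) = \mb S(\cdot)(\mb I - \mb P)(\mb u_1 - \mb u_2)$ together with the contraction property to get Lipschitz dependence on $\mb u$. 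Your write-up is slightly more explicit about why the decomposition holds (commutation of $\mb P$ with $\mb S(\tau)$, $\mb S(\tau)|_{\rg\mb P} = e^\tau$), but the argument is the same.
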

\begin{proof}
	To utilize the decay of $\bm S(\tau)$ on the stable subspace, we write  $\mb K_{\mb u}$ in the following way
	\begin{equation*}
		\mb K_{\mb u}( \Phi)(\tau)= \mb S(\tau)(\mb I- \mb P)\mb u +  \int_{0}^{\tau}\mb S(\tau-s) (\mb I- \mb P)\mb N(\Phi(s))ds - \int_{\tau}^{\infty} e^{\tau- s} \mb P \mb N(\Phi(s))ds.
	\end{equation*}
	Then, from Proposition \ref{Prop:Decay_stable} and Lemma \ref{Lem:Nonlin_est} we get that if $\Phi(s) \in \mc B_\delta$ for all $s \geq 0$ then
	\begin{equation*}\label{Eq:K(u,phi)_2}
		\| \mb K_{\mb u}( \Phi)(\tau) \| \lesssim e^{-\omega \tau}\| \mb u \| + e^{-\omega \tau} \int_{0}^{\tau}e^{\omega s}\| \Phi(s) \|^2ds + e^{\tau}\int_{\tau}^{\infty} e^{-s} \|  \Phi(s)  \|^2 ds.
	\end{equation*}
	Furthermore, if $ \mb u \in \mc B_{\delta/C}$ and $\Phi \in \mc X_{\delta}$ then the above estimate implies the bound
	\begin{equation*}
		e^{\omega \tau}\| \mb K_{\mb u}( \Phi)(\tau) \| \lesssim \tfrac{\delta}{C} + \delta^2 + \delta^2 e^{-\omega\tau}.
	\end{equation*}
	Also, we similarly get that
	\begin{equation*}
		e^{\omega \tau}\| \mb K_{\mb u}( \Phi)(\tau) - \mb K_{\mb u}( \Psi)(\tau) \| \lesssim (\delta + \delta e^{-\omega \tau})\| \Phi - \Psi \|_{\mc X}
	\end{equation*}
	for all $\Phi,\Psi \in \mc X_\delta$.
	Now, the last two displayed equations imply that for all small enough $\delta$ and for all large enough $C$, given $\mb u \in \mc B_{\delta/C}$ the operator $\mb K_{\mb u}$ is contractive on $\mc X_\delta$, with the contraction constant $\frac{1}{2}$. Consequently, the existence and uniqueness of solutions to \eqref{Eq:K(u,phi)} follows from the Banach fixed point theorem. The show continuity of the map $\mb u \mapsto \Phi(\mb u) $ we utilize Proposition \ref{Prop:Decay_stable} and the contractivity of $\mb K_{\mb u}$. Namely, we have the estimate
	\begin{align*}
		\| \Phi(\mb u)(\tau) - \Phi(\mb v)(\tau) \| &=
		\| \mb K_{\mb u}( \Phi(\mb u))(\tau) - \mb K_{\mb v}( \Phi(\mb v))(\tau)\| \\
		&\leq  	\| \mb K_{\mb u}( \Phi(\mb u))(\tau) - \mb K_{\mb v}( \Phi(\mb u))(\tau)\| + \| \mb K_{\mb v}( \Phi(\mb u))(\tau) - \mb K_{\mb v} (\Phi(\mb v))(\tau)\|  \\
		&\leq \| \mb S(\tau)(\mb I-\mb P)(\mb u-\mb v) \| + \tfrac{1}{2}\| \Phi(\mb u)(\tau) - \Phi(\mb v)(\tau) \| \\ 
		&\leq  Ce^{-\omega\tau} \| \mb u - \mb v \| + \tfrac{1}{2}\| \Phi(\mb u)(\tau) - \Phi(\mb v)(\tau) \|,
	\end{align*}
	from which the Lipschitz continuity follows.
\end{proof}

In the following lemma, we summarize the relevant boundedness and continuity properties of the initial data operator $\mb U$.

\begin{lemma}\label{Lem:U(v,t)}
	For $\delta \in (0,\frac{1}{2}]$ and ${\bf v} \in \mc H^{s,k} $ the map
	\begin{equation*}
		T \mapsto {\bf U}({\bf v},T):[1-\delta,1+\delta] \rightarrow \mc H^{s,k}
	\end{equation*}
	is continuous. In addition, we have that
	\begin{equation}\label{Eq:U(v,T)_est}
		\| {\bf U}({\bf v},T) \| \lesssim  \| {\bf v} \| + |T-1|
	\end{equation}
	for all ${\bf v} \in \mc H^{s,k} $ and all $T \in [\frac{1}{2},\frac{3}{2}]$.

\end{lemma}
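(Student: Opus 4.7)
The plan is to split $\mb U(\mb v,T)$ into a piece that depends linearly on $\mb v$ and a piece that depends only on $T$ and the static profile. Writing $\mb v = (\tilde v_0,\tilde v_1)$ so that $v_0 = \phi_0+\tilde v_0$ and $v_1=\phi_1+\tilde v_1$, the definition \eqref{Eq:SS_initial_data} rearranges to
\[
\mb U(\mb v,T) = A(T)\mb v + \mb B(T),
\]
where $A(T)\mb v := \big(T\tilde v_0(T\,\cdot),\,T^2\tilde v_1(T\,\cdot)\big)$ and $\mb B(T):=\big(T\phi_0(T\,\cdot)-\phi_0,\,T^2\phi_1(T\,\cdot)-\phi_1\big)$. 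I would estimate these two terms separately.

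For $A(T)$, the key ingredient is the elementary scaling identity $\|Tf(T\,\cdot)\|_{\dot{H}^r(\R^n)} = T^{r-\frac{n}{2}+1}\|f\|_{\dot{H}^r(\R^n)}$, immediate from a change of variables on the Fourier side, together with the matching $\|T^2 f(T\,\cdot)\|_{\dot{H}^{r-1}}=T^{r-\frac{n}{2}+1}\|f\|_{\dot{H}^{r-1}}$. The fact that the exponents coincide is precisely what makes both components of $A(T)\mb v$ scale compatibly inside $\mc H_{s,k}$, and since $T\in[\tfrac{1}{2},\tfrac{3}{2}]$ these powers are uniformly bounded. This yields $\|A(T)\mb v\| \lesssim \|\mb v\|$ with an implicit constant independent of $T$ on this interval.

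For $\mb B(T)$, I would invoke the fundamental theorem of calculus in $T$. A direct computation using $\Lambda f(y) = y\cdot\nabla f(y)$ gives $\tfrac{d}{dT}[Tf(T\,\cdot)] = (f+\Lambda f)(T\,\cdot)$ and $\tfrac{d}{dT}[T^2 f(T\,\cdot)] = T(2f+\Lambda f)(T\,\cdot)$, whence, using $\phi_1=\phi_0+\Lambda\phi_0$,
\[
T\phi_0(T\,\cdot)-\phi_0 = \int_1^T \phi_1(s\,\cdot)\,ds, \qquad T^2\phi_1(T\,\cdot)-\phi_1 = \int_1^T s\,(2\phi_1+\Lambda\phi_1)(s\,\cdot)\,ds.
\]
Pulling the $\mc H_{s,k}$-norm inside these integrals and applying the scaling identity to each integrand reduces both bounds to the finiteness of $\|\phi_1\|_{\dot{H}^s\cap \dot{H}^k}$ and $\|2\phi_1+\Lambda\phi_1\|_{\dot{H}^{s-1}\cap \dot{H}^{k-1}}$. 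Both are easy to verify directly from the closed form \eqref{Def:BB_sol}: each of these functions is $C^\infty$, radial, and decays at infinity at least like $|x|^{-3}$, so it belongs to all the Sobolev spaces involved. The resulting estimate $\|\mb B(T)\|\lesssim |T-1|$, combined with the $A(T)$-bound, establishes \eqref{Eq:U(v,T)_est}.

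For the continuity statement, the same integral representations make $T\mapsto \mb B(T)$ Lipschitz on $[1-\delta,1+\delta]$. For $T\mapsto A(T)\mb v$, continuity at an arbitrary $T_0$ holds trivially when $\mb v\in C^\infty_{c,\text{rad}}(\R^n)\times C^\infty_{c,\text{rad}}(\R^n)$ by dominated convergence on the Fourier side, and the uniform bound $\|A(T)\|\lesssim 1$ then extends continuity to all $\mb v\in \mc H_{s,k}$ via a standard $3\varepsilon$/density argument. No serious obstacle should arise; the mildest technical point is the Sobolev membership of $\phi_1$ and $2\phi_1+\Lambda\phi_1$, but this is immediate from the explicit form of $\phi$.
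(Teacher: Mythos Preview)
Your proof is correct and follows essentially the same approach as the paper: the decomposition $\mb U(\mb v,T)=A(T)\mb v+\mb B(T)$ is exactly the splitting the paper uses, and your density/$3\varepsilon$ argument for the continuity of $T\mapsto A(T)\mb v$ mirrors the paper's. One minor inaccuracy: $\phi_1(\xi)=\tfrac{2\sqrt{n-4}}{|\xi|^2+n-4}$ decays like $|\xi|^{-2}$, not $|\xi|^{-3}$, but this does not affect the argument since the required Sobolev memberships still hold (indeed $(\phi_1,\,2\phi_1+\Lambda\phi_1)=2\sqrt{n-4}\,\mb g\in\mc H_{s,k}$ by Lemma~\ref{Lem:Decay_functions}).
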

\begin{proof}
	Fix $\delta \in (0,\frac{1}{2}]$ and $\mb v \in \mc H^{s,k}$. Then for $T,S \in [1-\delta,1+\delta]$ we have that
	\begin{equation}\label{Eq:T-S}
		[\mb U(\mb v,T)]_1 - [\mb U(\mb v,S)]_1 = (T^2-S^2)v_0(T|\cdot|) + S^2 \big( v_0(T|\cdot|) - v_0(S |\cdot|) \big). 
	\end{equation}
	Let $\varepsilon >0$. Then there exists $\tilde{v}_0 \in C^\infty_{c,\text{rad}}(\R^n)$ for which $\| v_0(|\cdot|) - \tilde{v}_0 \|_{\dot{H}^s \cap \dot{H}^k(\R^n)} < \varepsilon$. Then, by writing
	\begin{equation*}
		v_0(T|\cdot|) - v_0(S |\cdot|) = \big ( v_0(T|\cdot|)-\tilde{v}_0(T\cdot) \big) + \big( \tilde{v}_0(T\cdot) - \tilde{v}_0(S\cdot) \big) + \big( \tilde{v}_0(S\cdot) - v_0(S|\cdot|) \big)
	\end{equation*}
	and using the fact that 
	$\lim_{S \rightarrow T} \| \tilde{v}_0(T\cdot) - \tilde{u}_0(S\cdot)  \|_{\dot{H}^s \cap \dot{H}^k(\R^n)}=0,
	$
	from \eqref{Eq:T-S} we see that 
	\begin{equation*}
		\lim_{S \rightarrow T} \|[\mb U(\mb v,T)]_1 - [\mb U(\mb v,S)]_1 \|_{\dot{H}^s \cap \dot{H}^k(\R^n)} \lesssim \varepsilon.
	\end{equation*}
	Then, continuity follows by letting $\varepsilon \rightarrow 0$. The second component of $\mb U(\mb v,T)$ is treated in the same way. For the second part of the lemma, we write  $\mb U(\mb v,T)$ in the following way
	\begin{equation}\label{Eq:U(v,T)}
		\mb U(\mb v,T)=
		\begin{pmatrix}
			Tv_1(T\cdot)\\
			T^2 v_2(T\cdot)
		\end{pmatrix}+ 
		\begin{pmatrix}
			T \phi_0(T\cdot)-\phi_0 \\
			T^2 \phi_1(T\cdot) - \phi_1
		\end{pmatrix},
	\end{equation}
	where by $v_1$ and $v_2$ we denote the components of $\bm v$. From here, the estimate \eqref{Eq:U(v,T)_est} follows.
\end{proof}

Now, we are in the position to state and prove the central result of this section.

\begin{theorem}\label{Thm:CoMain}
	There  exist $\delta,N>0$ such that the following holds. If
	\begin{equation}\label{Eq:v_smallness}
		{\bf v} \in \mc H^{s,k}, \quad \text{the components of {\bf v} are  real-valued}, \quad \text{and} \quad \| {\bf v} \| \leq \tfrac{\delta}{N^2},
	\end{equation}
	then there exist $T \in [1-\frac{\delta}{N}, 1+\frac{\delta}{N}]$ and $ \Phi \in \mc X_{\delta}$ such that \eqref{Eq:Duhamel} holds for all $\tau \geq 0.$ 
\end{theorem}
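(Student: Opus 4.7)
The plan is a Lyapunov-Perron argument. Proposition \ref{Prop:K} already produces, for each small $\mathbf{u}$, a unique $\Phi(\mathbf{u}) \in \mc X_\delta$ solving the modified fixed-point equation $\Phi = \mathbf{K}_{\mathbf{u}}(\Phi)$, which differs from the Duhamel equation \eqref{Eq:Duhamel} only by the term $\mathbf{S}(\tau)\mathbf{C}(\mathbf{u},\Phi)$; this correction lives in the one-dimensional unstable subspace $\rg \mathbf{P} = \langle\mathbf{g}\rangle$. The idea is to use the single real parameter $T$ to tune $\mathbf{u} = \mathbf{U}(\mathbf{v},T)$ so as to annihilate this one-dimensional obstruction, reducing the problem to a scalar equation solvable by the intermediate value theorem.

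First I would fix $N$ large and $\delta$ small so that, whenever $\mathbf{v}$ satisfies \eqref{Eq:v_smallness} and $T \in [1-\delta/N, 1+\delta/N]$, Lemma \ref{Lem:U(v,t)} yields $\|\mathbf{U}(\mathbf{v},T)\| \lesssim \|\mathbf{v}\| + |T-1| \leq \delta/C$, where $C$ is the constant of Proposition \ref{Prop:K}. Setting $\Phi_T := \Phi(\mathbf{U}(\mathbf{v},T)) \in \mc X_\delta$, the map $T \mapsto \Phi_T$ is continuous by the Lipschitz statement in Proposition \ref{Prop:K} and the continuity of $T \mapsto \mathbf{U}(\mathbf{v},T)$ from Lemma \ref{Lem:U(v,t)}. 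Writing $F(T) := \mathbf{C}(\mathbf{U}(\mathbf{v},T),\Phi_T) = f(T)\mathbf{g}$, the scalar map $f:[1-\delta/N,1+\delta/N] \to \mathbb{R}$ is continuous, and the task reduces to exhibiting a sign change: once $f(T^*) = 0$ is secured, the corrected fixed-point equation for $\Phi_{T^*}$ collapses to \eqref{Eq:Duhamel}.

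The crux is the leading-order behavior of $F$ near $T = 1$. Using the decomposition from Lemma \ref{Lem:U(v,t)}, $\mathbf{U}(\mathbf{v},T)$ splits into a part depending linearly on $\mathbf{v}$, of norm $\lesssim \|\mathbf{v}\|$ uniformly for $T$ in the compact range, plus the $\mathbf{v}$-independent piece $\mathbf{h}(T) := (T\phi_0(T\cdot) - \phi_0,\,T^2\phi_1(T\cdot) - \phi_1)$. Then $\mathbf{h}(1) = 0$ and a direct calculation gives $\partial_T\mathbf{h}(1) = (\phi_1,\Lambda\phi_1 + 2\phi_1)$. This has exactly the structural form of a $\lambda = 1$ eigenvector of $\mathbf{L}$, and an elementary computation using $\phi(\rho) = (2/\rho)\arctan(\rho/\sqrt{n-4})$ shows $\phi_1 = (\Lambda+1)\phi_0 = 2\sqrt{n-4}\,g$, so $\partial_T\mathbf{h}(1) = 2\sqrt{n-4}\,\mathbf{g}$. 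Combining with the bound $\|\int_0^\infty e^{-s}\mathbf{N}(\Phi_T(s))\,ds\| \lesssim \delta^2$ (from \eqref{Eq:Nonlin_est} and $\Phi_T \in \mc X_\delta$) yields
\[
f(T) = 2\sqrt{n-4}\,(T-1) + O\bigl((T-1)^2 + \|\mathbf{v}\| + \delta^2\bigr).
\]
At $T = 1\pm\delta/N$ the leading term has magnitude $\simeq \delta/N$, whereas the remainder is controlled by a constant times $\delta^2/N^2 + \delta/N^2 + \delta^2$. Choosing $N$ large and then $\delta$ small (both depending only on the implicit constants), the leading term dominates and $f$ changes sign, producing $T^*$ by the intermediate value theorem.

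The main obstacle is the nondegeneracy $\mathbf{P}\,\partial_T\mathbf{h}(1) \neq 0$: without it the parameter $T$ could not effectively modulate the unstable direction and the whole scheme would collapse. Fortunately this nondegeneracy is forced by a structural fact: the eigenfunction $\mathbf{g}$ was generated by the time-translation symmetry of $\Psi_0$, and varying $T$ is precisely how that symmetry acts on the initial data, so the projection onto $\mathbf{g}$ must be nonzero; the identity $\phi_1 = 2\sqrt{n-4}\,g$ quantifies this. All other ingredients---continuity, smallness of the quadratic nonlinear contribution, and smallness of the $\mathbf{v}$-dependent part---are routine assembly of Proposition \ref{Prop:K}, Proposition \ref{Prop:Pert_semigroup}, Lemma \ref{Lem:U(v,t)}, and Lemma \ref{Lem:Nonlin_est}.
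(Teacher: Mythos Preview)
Your proposal is correct and follows essentially the same Lyapunov--Perron argument as the paper: both identify $\partial_T\mathbf{h}(1) = 2\sqrt{n-4}\,\mathbf{g}$ as the key nondegeneracy, expand the correction term to leading order in $T-1$, and bound the remainder by $O(\delta/N^2 + \delta^2)$ to reduce to a one-dimensional problem. The only cosmetic difference is that the paper rewrites the vanishing condition as a fixed-point equation $T = F(T)$ and invokes Brouwer, whereas you verify a sign change of $f$ at the endpoints and invoke the intermediate value theorem; on an interval these are equivalent.
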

\begin{proof}
	Lemma \ref{Lem:U(v,t)} and Proposition \ref{Prop:K} imply that for all small enough $\delta$ and all large enough $N$ we have that if $\mb v$ satisfies \eqref{Eq:v_smallness} and $T \in [1-\frac{\delta}{N} , 1+\frac{\delta}{N}]$ then there is a unique $\Phi=\Phi({\mb v},T) \in \mc X_\delta $ that solves
	\begin{equation}\label{Eq:Duhamel_C}
		\Phi(\tau) =\mb S(\tau)\big(\mb U(\mb v,T) - \mb C(\mb U(\mb v,T), \Phi ) \big) + \int_{0}^{\tau}\mb S(\tau-s)\mb N\big(\Phi (s)\big)ds.
	\end{equation}
	We remark that $\Phi(\tau)$ is real-valued for all $\tau \geq 0$, as the set of real-valued functions in $\mc H^{s,k}$ is invariant under the action of both $\bm S(\tau)$ and $\bm P$. Now, to construct solutions to \eqref{Eq:Duhamel}, we prove that there is a choice of $\delta$ and $N$ such that for any $\mb v$ that satisfies \eqref{Eq:v_smallness} there is $T=T(\mb v) \in[1-\frac{\delta}{N} , 1+\frac{\delta}{N}]$ for which
	the correction term in \eqref{Eq:Duhamel_C} vanishes.
	As $\mb C$ takes values in $\rg \mb P = \langle \mb g \rangle$, it is enough to show existence of $T$ for which
	\begin{equation}\label{Eq:Fixed_pt}
		\inner{\bm C(\mb U(\mb v,T), \Phi({\mb v},T)), \mb g}_{\mc H^{s,k}}=0.
	\end{equation} 
	We therefore consider the real function $T \mapsto \inner{\bm C(\mb U(\mb v,T), \Phi({\mb v},T)), \mb g}_{\mc H^{s,k}} $ and employ the Brouwer fixed point theorem to prove that it vanishes on $[1-\frac{\delta}{N} , 1+\frac{\delta}{N}]$. The central observation to this end is that
	\begin{equation*}
		\partial_T
		\begin{pmatrix}
			T \phi_0 (T\cdot) \\
			T^2 \phi_1(T\cdot)
		\end{pmatrix} \bigg|_{T=1} 
		=  2 \sqrt{n-4} \,  \mb g,
	\end{equation*}
	 Based on this, by Taylor's formula, from \eqref{Eq:U(v,T)}  we get that 
	\begin{equation*}
		\inner{\mb P\mb U(\mb v,T) , \mb g }_{\mc H^{s,k}}= 2 \sqrt{n-4}\,\| \mb g\|^2 \,(T-1)+R_1(\mb v,T),
	\end{equation*}
	where  $R_1(\mb v,T)$ is continuous in $T$ and $R_1(\mb v,T) \lesssim \delta/N^2$.
	Furthermore, based on the definition of the correction term $\mb C$ we similarly conclude that
	\begin{equation*}
		\inner{\bm C(\mb U(\mb v,T), \Phi({\mb v},T)), \mb g}_{\mc H^{s,k}}= 2 \sqrt{n-4}\,\| \mb g\|^2 \,(T-1) +  R_2(\mb v , T),
	\end{equation*}
	where $T \mapsto R_2(\bm v,T)$ is a continuous, real-valued function on $[1-\frac{\delta}{N} , 1+\frac{\delta}{N}]$, for which  $R_2(\mb v,T) \lesssim \delta/N^2 + \delta^2.$
	Therefore, there is a choice of sufficiently large $N$ and sufficiently small $\delta$ such that $|R_2(\mb v,T)| \leq 2 \sqrt{n-4}\|\mb g \|^2 \frac{\delta}{N}$. Based on this, we get that \eqref{Eq:Fixed_pt}  is equivalent to
	\begin{equation}\label{Eq:T}
		T=F(T)
	\end{equation}
	for some function $F$ which maps the interval $[1-\frac{\delta}{N} , 1+\frac{\delta}{N}]$ continuously into itself. Consequently, by the Brouwer fixed point theorem we infer the existence of $T \in [1-\frac{\delta}{N} , 1+\frac{\delta}{N}]$ for which \eqref{Eq:T}, and therefore \eqref{Eq:Fixed_pt}, holds. The claim of the theorem follows.
\end{proof}

\section{Upgrade to classical solutions}\label{Sec:Upgrade_to_class}
\noindent In this section we show that in case the initial data $\mb v$ in \eqref{Eq:v_smallness} are smooth and rapidly decaying, the corresponding strong solution to \eqref{Eq:Duhamel} is also smooth, and satisfies \eqref{Eq:Vector_pert_2} classically. For this, we first use the abstract semigroup theory results to upgrade  strong solutions to classical ones in the semigroup sense. Then we use repeated differentiation together with Schwarz's theorem to upgrade these to smooth solutions that solve \eqref{Eq:Vector_pert_2} classically.

\begin{proposition}\label{Prop:Upgrade_to_class}
	If $\bm v$ from Theorem \ref{Thm:CoMain} is such that its components belong to the Schwartz class $\mc S(\R^n)$, then the corresponding solution $\Phi$ to \eqref{Eq:Duhamel} belongs to $C^\infty([0,\infty)\times \R^n) \times C^\infty([0,\infty)\times \R^n)$ and satisfies \eqref{Eq:Vector_pert_2} in the classical sense.
\end{proposition}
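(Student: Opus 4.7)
The plan is to bootstrap the strong $\mc H_{s,k}$-solution $\Phi$ to a jointly smooth classical solution in two stages: first gain spatial smoothness by letting the upper Sobolev exponent grow, then gain temporal smoothness from the abstract semigroup ODE, and close via Schwarz's theorem. Throughout, $T$ denotes the blowup time already fixed by Theorem~\ref{Thm:CoMain}.

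\emph{Step 1 (spatial bootstrap).} Since the components of $\bm v$ are Schwartz and $\phi_0, \phi_1$ are smooth with polynomial decay, the initial datum $\bm U(\bm v, T)$ belongs to $\mc H_{s, k'}$ for every integer $k' > n$. The semigroup theory of Sections~\ref{Sec:Free_semigroup} and~\ref{Sec:Perturbed_semigroup} applies verbatim on each such $\mc H_{s, k'}$, and Lemma~\ref{Lem:Nonlin_est} gives local Lipschitz continuity of $\bm N$ there. A standard persistence-of-regularity argument applied to the Duhamel equation \eqref{Eq:Duhamel} (with $T$ fixed, via Gronwall on bounded $\tau$-intervals) then yields $\Phi \in C([0,\infty), \mc H_{s, k'})$ for every such $k'$. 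By Lemma~\ref{Lem:L_infty_embedding}, this promotes $\Phi(\tau, \cdot)$ to $C^m_{\mathrm{rad}}(\R^n) \times C^{m-1}_{\mathrm{rad}}(\R^n)$ for arbitrarily large $m$, with continuity in $\tau$.

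\emph{Step 2 (temporal bootstrap).} Using the Schwartz decay of $\bm v$ together with the cancellation identity
\begin{equation*}
T \phi_0(T\,\cdot) - \phi_0 = \int_{1}^{T} \phi_1(T'\, \cdot)\, dT'
\end{equation*}
(and its analogue obtained by differentiating $T^2 \phi_1(T\, \cdot)$ in $T$), which exposes the improved decay of $\phi_1(y) = 2\sqrt{n-4}/(|y|^2 + n - 4)$ hidden in these differences, one verifies the pointwise decay hypothesis of Lemma~\ref{Lem:Decay_functions} for $\bm U(\bm v, T)$, placing it in $\mc D(\bm L_0) \subseteq \mc D(\bm L)$. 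Standard semilinear-semigroup theory then upgrades $\Phi$ to a strong solution: $\Phi \in C^1([0,\infty), \mc H_{s, k})$, $\Phi(\tau) \in \mc D(\bm L)$, and
\begin{equation*}
\partial_\tau \Phi(\tau) = \bm L \Phi(\tau) + \bm N(\Phi(\tau))
\end{equation*}
holds pointwise in $\mc H_{s, k}$. Differentiating this identity inductively in $\tau$, combined with Step 1, yields $\partial_\tau^m \Phi \in C([0,\infty), \mc H_{s, k'})$ for all $m$ and all admissible $k'$.

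\emph{Conclusion and obstacle.} Schwarz's theorem on mixed partials then yields $\Phi \in C^\infty([0,\infty) \times \R^n) \times C^\infty([0,\infty) \times \R^n)$, and unpacking the abstract ODE via the explicit formulas \eqref{Def:Wave_oper_sim}, \eqref{Def:L'}, \eqref{Def:N} shows that \eqref{Eq:Vector_pert_2} is satisfied classically. The main obstacle will be the domain check in Step 2: the mere $1/|x|$ decay of $\phi_0$ is insufficient for Lemma~\ref{Lem:Decay_functions}, and the verification requires exploiting the cancellation in the rescaling difference $T\phi_0(T\cdot) - \phi_0$ to expose the faster $1/|x|^2$ decay of $\phi_1$, with a corresponding further gain needed for the second component.
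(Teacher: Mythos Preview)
Your Step~1 contains a real gap. Citing Lemma~\ref{Lem:Nonlin_est} gives only a quadratic bound $\|\bm N(\bm u)\|_{\mc H_{s,k'}} \lesssim \|\bm u\|_{\mc H_{s,k'}}^2$, and a Gronwall argument with a quadratic right-hand side does not rule out finite-time blowup of the $\mc H_{s,k'}$-norm; persistence of regularity would require a \emph{tame} estimate of the form $\|\bm N(\bm u)\|_{\mc H_{s,k'}} \leq C(\|\bm u\|_{\mc H_{s,k}})\,\|\bm u\|_{\mc H_{s,k'}}$, which you neither state nor prove. The paper avoids this issue altogether by exploiting the \emph{smoothing} hidden in the strong Schauder estimate (Proposition~\ref{Prop:Schauder}; see Remark~\ref{Rem:Sch2}): since the second component of $\bm N(\bm u)$ lands in $\dot H^{s-1}\cap\dot H^{k}$ whenever $u_1\in\dot H^s\cap\dot H^k$, one has $\|\bm N(\Phi(s))\|_{\mc H_{s,k+1}} \leq G\big(\|\Phi(s)\|_{\mc H_{s,k}}\big)$ with the right-hand side already globally controlled by Theorem~\ref{Thm:CoMain}. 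Duhamel then yields $\Phi(\tau)\in\mc H_{s,k+1}$ directly, and one iterates one derivative at a time---no a~priori control of the higher norm is ever needed. You also silently assume that the semigroups on the various $\mc H_{s,k'}$ are mutually consistent; the paper secures this via the restriction result Lemma~\ref{Lem:Restriction}.

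Your Step~2 and the closing Schwarz argument are in line with the paper, and your treatment of the cancellation in $T\phi_0(T\cdot)-\phi_0$ is a correct and useful elaboration of the domain check $\bm U(\bm v,T)\in\mc D(\bm L)$, which the paper merely asserts. Contrary to your closing remark, however, the genuine obstacle is Step~1, not the domain check.
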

\begin{proof}
	To make a distinction, for $j \in \mathbb{N}$ we denote by $\bm S_j(\tau)$ the semigroup generated by $\bm L$ in $\mc H^{s,k+j}$. Then, we have that the following restriction property holds 
	\begin{equation}\label{Eq:Restr_j}
		\bm S(\tau)|_{\mc H^{s,k+j}} = \bm S_j(\tau).
	\end{equation}
This is, in fact, a direct consequence of a general restriction theorem that underlies this setting, and which we, for convenience and future applications, state and prove in the appendix; see Lemma \ref{Lem:Restriction}.
Now, according to \eqref{Eq:Restr_j} and the Schauder estimate \eqref{Eq:Schauder_estimate}, from \eqref{Eq:Duhamel} we have that there exists $\omega_1 \in \R$ and a continuous function $G:[0,\infty) \mapsto [0,\infty)$ such that 
\begin{align*}
	\norm{\Phi(\tau)}_{\mc H^{s,k+1}} &\lesssim 
	e^{\omega_1 \tau} \norm{\bm U(\bm v,T)}_{\mc H^{s,k+1}} + \int_{0}^{\tau}e^{\omega_1 (\tau-s)}\norm{\bm N(\Phi(s))}_{\mc H^{s,k+1}}ds\\
	& \lesssim e^{\omega_1 \tau} \norm{\bm U(\bm v,T)}_{\mc H^{s,k+1}} + \int_{0}^{\tau}e^{\omega_1 (\tau-s)}G\big(\norm{\Phi(s)}_{\mc H^{s,k}}\big)ds.
\end{align*}
Consequently, $\Phi(\tau) \in \mc H^{s,k+1}$ for all $\tau \geq 0$. Since $\bm U(\bm v,T) \in \mc H^{s,\ell}$ for all $\ell \geq k$, we proceed inductively to get that $\Phi(\tau) \in \mc H^{s,\ell}$ for every $\ell \geq k$. Then, by the embedding \eqref{Eq:L_infty_embedding} we conclude that $\Phi(\tau) \in C^\infty(\R^n)\times C^\infty(\R^n)$ for all $\tau \geq 0$. To establish regularity in $\tau$, we do the following. Since $\bm U(\bm v,T)$ belongs to $\mc D(\bm L)$ relative to $\mc H^{s,k}$,  and $\bm N$ is locally Lipschitz continuous on $\mc H^{s,k}$,  we have that $\Phi \in C^1([0,\infty),\mc H^{s,k})$, and $\Phi$ satisfies \eqref{Eq:Vector_pert_2} in the operator sense (see, e.g., \cite{CazHar98}, p.~60, Proposition 4.3.9). What is more, as $\mc H^{s,k}$ is continuously embedded in $L^\infty(\R^n) \times L^\infty(\R^n)$ the $\tau$-derivative holds pointwise. Consequently, by (a strong version of) the Schwarz theorem (see, e.g., \cite{Rud76}, p.~235, Theorem 9.41), we  conclude that mixed derivatives of all orders in $\tau$ and $\xi$  exist, and we thereby infer smoothness of both components of $(\tau,\xi) \mapsto \Phi(\tau)(\xi)$.
\end{proof}

\begin{proof}[Proof of Theorem \ref{Thm:Main}]
	First, we utilize the equivalence of Sobolev norms of corotational maps and their radial profiles to translate the smallness assumption for $(\varphi_0,\varphi_1)$ into the one for $\mb v$ from \eqref{Def:InitCond_v}. Namely, according to \cite{Glo22}, Proposition A.5, we can choose $\varepsilon>0$  small enough such that
	\begin{equation*}
		\| U_0 - \Phi \|_{\dot{H}^{s} \cap \dot{H}^{k}(\R^d)} + 	\| U_1 - \Lambda\Phi \|_{\dot{H}^{s-1} \cap \dot{H}^{k-1}(\R^d)}  < \varepsilon	
	\end{equation*}
implies
\begin{equation*}
		\| v_0 - \phi_0 \|_{\dot{H}^{s} \cap \dot{H}^{k}(\R^n)} + 	\| v_1  - \phi_1 \|_{\dot{H}^{s-1} \cap \dot{H}^{k-1}(\R^n)}  < \tfrac{\delta}{N^2},
\end{equation*}
for $\delta,N$ from Theorem \ref{Thm:CoMain}. According to the definition of $\bm v$, this means that
$\norm{\bm v}_{\mc H^{s,k}} < \tfrac{\delta}{N^2}$. Therefore, according to Theorem \ref{Thm:CoMain}, there exists $T \in [1-\frac{\delta}{N}, 1+\frac{\delta}{N}]$, and a solution $\Phi \in C([0,\infty),\mc H^{s,k})$ to \eqref{Eq:Duhamel}, for which
\begin{equation}\label{Eq:Est_Phi}
	\norm{\Phi(\tau)}_{\mc H^{s,k}} \leq \delta e^{-\omega \tau}.
\end{equation}
Now, since the components of $\bm v$ belong by assumption to $\mc S(\R^n)$, Proposition \ref{Prop:Upgrade_to_class} implies that $\Phi$ is smooth and solves \eqref{Eq:Vector_pert_2} classically. Consequently, $\Psi = \Phi + \Psi_0$ is a classical solution to \eqref{Eq:Evol_equ}. Therefore, since $\Psi(\tau) = (\psi_1(\tau,\cdot),\psi_2(\tau,\cdot))$, by defining $\tilde{\psi}_1(\tau,|\cdot|):=\psi_1(\tau,\cdot)$ we have according to \eqref{Def:psi_1}, \eqref{Def:psi}, \eqref{Def:u}, and \eqref{Def:CorAnsatz} that
\begin{equation*}
	U(t,x)=\frac{x}{T-t}\tilde{\psi}_1\left(\ln\left(\frac{T}{T-t}\right),\frac{|x|}{T-t}\right)
\end{equation*}
belongs to $C^\infty([0,T)\times \R^d)$ and solves the system \eqref{Eq:WM} on $[0,T)\times \R^d$ classically. Uniqueness of the  solution $U$ follows by standard results concerning wave equations in physical coordinates. Let $\Phi(\tau)=(\varphi_1(\tau,|\cdot|),\varphi_2(\tau,|\cdot|))$. From the fact that $\tilde{\psi}_1(\tau,|\cdot|)=\phi(|\cdot|)+\varphi_1(\tau,|\cdot|)$, we get the decomposition
\begin{equation*}
	U(t,x)=\Phi\left(\frac{x}{T-t}\right) + \varphi\left(t,\frac{x}{T-t}\right),
\end{equation*}
where $\varphi(t,x)=x\, \varphi_1\big(\ln T - \ln (T-t),|x|\big)$. We hope no confusion is caused by the small clash of notation here, since we used the letter $\Phi$ to denote both the solution \eqref{Eq:Est_Phi} and the corotational similarity profile in \eqref{Def:BB_sol_vector}.
Now, from \eqref{Eq:Est_Phi}, and the equivalence given in \cite{Glo22}, Proposition A.5, we have that
\begin{equation}\label{Eq:Est1}
	\norm{\varphi(t,\cdot)}_{\dot{H}^s \cap \dot{H}^k (\R^d)} \lesssim (T-t)^\omega,
\end{equation}
for all $t \in [0,T)$. Similarly, for the time derivative component
\begin{equation*}
	\partial_tU(t,x)=\frac{1}{T-t}\Lambda\Phi\left(\frac{x}{T-t}\right) + \partial_0\varphi\left(t,\frac{x}{T-t}\right)+ \frac{1}{T-t}\Lambda \varphi\left(t,\frac{x}{T-t}\right) ,
\end{equation*}
we get from \eqref{Eq:Est_Phi} that
\begin{equation}\label{Eq:Est2}
	\norm{(T-t)\partial_t\varphi(t,\cdot)+\Lambda \varphi(t,\cdot)}_{\dot{H}^{s-1} \cap \dot{H}^{k-1} (\R^d)} \lesssim (T-t)^\omega.
\end{equation}
The estimate \eqref{Eq:varphi_zero} then follows from \eqref{Eq:Est1} and \eqref{Eq:Est2}. To establish the last statement of the theorem, we do the following. First, according to the $L^\infty$-embedding of $\mc H^{s,k}$, from \eqref{Eq:Est_Phi} we have that $\| \varphi_1(\tau,\cdot) \|_{L^\infty[0,\infty)} \rightarrow 0$ as $\tau \rightarrow \infty$. Therefore, $\tilde{\psi}_1(\tau,\cdot) \rightarrow \phi$ uniformly on $[0,\infty)$, and consequently 
\begin{equation*}
	U(t,(T-t)\cdot)=(\cdot) \tilde{\psi}_1\left(\ln\left(\frac{T}{T-t}\right),|\cdot|\right) \rightarrow (\cdot) \phi(|\cdot|) = \Phi
\end{equation*}
uniformly on compact sets in $\R^d$ as $t \rightarrow T^-$. 
\end{proof}

\appendix

\section{A Schauder estimate for a general class of nonlinear operators}\label{Sec:Schauder}
\noindent In this section we establish a Schauder estimate, which is instrumental in proving Lemma \ref{Lem:Nonlin_est}. What is more, we expect this estimate to play an important role in proving analogous results in more general contexts; see Remarks \ref{Rem:Sch1} and \ref{Rem:Sch2} below.
\begin{proposition}\label{Prop:Schauder}
	Let $n \geq 5$. Also, let $F \in C^\infty(\R)$ be an even function, such that given $\ell \in \mathbb{N}_0$ 
	\begin{equation}\label{Eq:F_est}
		|F^{(\ell)}(x)| \lesssim 1 
	\end{equation}
for all $x \in \R$. Then, for every $s,k$ that satisfy
\begin{equation}\label{Eq:Schauder_conds}
		\frac{n}{2}-1 < s \leq \frac{n}{2}-1+\frac{1}{2(n-2)}, \quad  k > n, \quad  k \in \mathbb{N},
\end{equation}
 we have that
\begin{equation}\label{Eq:Schauder_estimate}
	\norm{u_1u_2u_3 F(|\cdot|v)}_{\dot{H}^{s-1} \cap \hsob{k}} \lesssim \prod_{i=1}^{3} \norm{u_i}_{\dot{H}^{s} \cap \hsob{k}} \big(1+\norm{v}_{\dot{H}^{s} \cap \hsob{k}}^{2k} \big)
\end{equation}
for all $u_1,u_2,u_3,v \in C^\infty_{c,\emph{rad}}(\R^n)$ where $v$ is real-valued.
\end{proposition}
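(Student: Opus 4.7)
The plan is to bound the two components of $\|\cdot\|_{\dot{H}^{s-1} \cap \hsob{k}}$ separately. A preliminary observation to be used throughout is that, since $F$ is even, one may write $F(y) = G(y^2)$ for a function $G \in C^\infty(\R)$ whose derivatives all satisfy the same uniform bound as those of $F$; then $F(|x|v) = G(|x|^2 v^2)$ is smooth in $x$ on all of $\R^n$, the potential singularity of $|x|$ at the origin being absorbed by the square. This makes the Leibniz and Fa\`a di Bruno manipulations below classically justified.

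For the $\hsob{k}$ part, with $k \in \mathbb{N}$, I would invoke the equivalence \eqref{Eq:Norms_equiv}, apply the Leibniz rule to the fourfold product $u_1 u_2 u_3 G(|x|^2 v^2)$, and then the Fa\`a di Bruno formula to the composite $G(|x|^2 v^2)$. Each resulting summand has the schematic form
\[
\Bigl(\prod_{i=1}^{3} \partial^{\alpha_i} u_i\Bigr)\, G^{(\ell)}(|x|^2 v^2)\, \prod_{j=1}^{\ell} \partial^{\beta_j}(|x|^2 v^2),
\]
where $\ell \leq k$, $\sum_i|\alpha_i|+\sum_j|\beta_j|=k$, and the expansion involves at most $2k$ factors of $v$ and its derivatives, explaining the degree of the polynomial $\sum_{j=0}^k \|v\|^{2j}$ on the right-hand side. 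I would place the factor of highest-order derivative in $L^2$ and bound all remaining factors in $L^\infty$, using the embedding from Lemma \ref{Lem:L_infty_embedding} available under \eqref{Eq:Schauder_conds} since $s < \tfrac{n}{2}$ and $k > n$. Combined with the uniform bound on $G^{(\ell)}$ from \eqref{Eq:F_est}, this yields the $\hsob{k}$ estimate.

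For the $\dot{H}^{s-1}$ part the difficulty is genuine because $s-1 < \tfrac{n}{2}$, so an $L^\infty$-embedding is unavailable at this level. My plan is to combine a fractional-Leibniz (Kato--Ponce) decomposition with the weighted radial $L^\infty$-estimate of Proposition \ref{Prop:Strauss}: after distributing a fractional derivative of total order $s-1$ over the four factors, one factor retains a nontrivial fractional derivative measured in $L^2$, while the remaining three are measured in a weighted $L^\infty$ of the schematic form $|x|^{\tfrac{n}{2}-s}|u_i(x)| \lesssim \|u_i\|_{\dot{H}^s \cap \hsob{k}}$, where the $\dot{H}^s$ piece controls $u_i$ near the origin and the $\hsob{k}$ piece controls it away from the origin. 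The chain-rule contributions from $G(|x|^2 v^2)$ are handled in the same way: each derivative of $v$ generated inside the composite comes with a compensating factor of $|x|$, which is paired against the weighted $L^\infty$-bound for $\partial^\gamma v$.

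The main obstacle, and the source of the sharp constraint $s - (\tfrac{n}{2}-1) \leq \tfrac{1}{2(n-2)}$, is the $L^2$-integrability bookkeeping that follows from the above weight collection: three radial factors contribute a combined singular weight $|x|^{3(s-n/2)}$ near zero, while the factor $|x|$ sitting inside $F(|x|v)$ contributes one unit of weight per derivative of $v$ generated by the chain rule. Requiring the resulting weighted product to be square-integrable yields, after elementary arithmetic on the exponents, exactly the endpoint $s \leq \tfrac{n}{2} - 1 + \tfrac{1}{2(n-2)}$. Establishing the sharp form of Proposition \ref{Prop:Strauss} for derivatives of radial Sobolev functions, and then making this weighted bookkeeping rigorous --- in particular absorbing all chain-rule contributions without losing the inequality --- is the technical heart of the proof.
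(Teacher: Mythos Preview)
Your proposal has the right overall architecture --- Leibniz/Fa\`a di Bruno expansion, an $L^2$--$L^\infty$ splitting, and the weighted radial estimate of Proposition~\ref{Prop:Strauss} --- but there are two genuine gaps.

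\emph{The $\dot{H}^k$ estimate.} Your plan to bound the low-order factors in $L^\infty$ via Lemma~\ref{Lem:L_infty_embedding} does not work as stated, because the Fa\`a di Bruno expansion of $G(|x|^2v^2)$ produces monomials $x^\beta$ with $|\beta|$ as large as $2\ell$ (and $\ell$ ranges up to $k$). These grow at infinity, and the unweighted embedding gives no decay to compensate: a term like $|x|^{2\ell}\,u_1u_2u_3\,v^{2\ell-1}\,\partial^\alpha v$ cannot be placed in $L^2(\R^n)$ by your scheme. The paper handles this by splitting $\R^n$ into $\B^n$ and its complement; on the complement one uses Proposition~\ref{Prop:Strauss} (not Lemma~\ref{Lem:L_infty_embedding}) with carefully chosen exponents so that each of the $2\ell+2$ non-$L^2$ factors contributes a negative power of $|x|$ that absorbs $x^\beta$, while on $\B^n$ one uses Hardy's inequality. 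A further case split on whether the highest derivative order exceeds $\tfrac{n}{2}-1$ is needed to keep all Sobolev indices in $[s,k]$.

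\emph{The $\dot{H}^{s-1}$ estimate.} Your Kato--Ponce plan is not clearly compatible with the weighted pointwise bounds you want to use afterward: fractional derivatives do not commute with multiplication by $|x|^a$, so ``inserting weights'' after distributing a fractional derivative is not a well-defined operation, and you give no mechanism for it. The paper avoids fractional Leibniz altogether by the elementary observation that $\|\cdot\|_{\dot{H}^{s-1}\cap\dot{H}^k}$ is controlled by $\|\cdot\|_{\dot{H}^{k_1}\cap\dot{H}^k}$ with $k_1=\lfloor s-1\rfloor=\lfloor\tfrac{n}{2}-2\rfloor$ an integer. This reduces the low-regularity estimate to classical Leibniz, after which weights can be inserted pointwise: the $L^2$ factor is controlled by Hardy, and the remaining $2\ell+2$ factors by Proposition~\ref{Prop:Strauss} with exponents $a_i=|\alpha_i|+\tfrac{2\ell+s-k_1}{2\ell+2}$ (and analogously $b_j$). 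Checking that the resulting Sobolev indices $\tfrac{n}{2}-a_i+|\alpha_i|$ all lie in $[s,\tfrac{n}{2})$ is precisely where the upper bound $s\le\tfrac{n}{2}-1+\tfrac{1}{2(n-2)}$ enters; your heuristic about ``three factors contributing $|x|^{3(s-n/2)}$'' points in the right direction but does not recover this arithmetic.
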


\begin{proof}
	 First, we make the following observation. Since $F$ is even, by repeated application of the fundamental theorem of calculus, we infer the existence of function $G \in C^\infty[0,\infty)$ for which $F(x)=G(x^2)$. Furthermore, from \eqref{Eq:F_est} we have that given $\ell \in \mathbb{N}_0$
	\begin{equation*}
		|G^{(\ell)}(x)| \lesssim 1
	\end{equation*}
	for all $x \in [0,\infty)$.
	Now, choose $s,k$ that satisfy \eqref{Eq:Schauder_conds}. As usual, our approach is to estimate the integer order Sobolev norm $\dot{H}^{\lfloor s-1 \rfloor} \cap \dot{H}^k$, as it controls the one in \eqref{Eq:Schauder_estimate}. Since $\lfloor s-1 \rfloor=\lfloor \frac{n}{2}-2 \rfloor$, we fix $k_1:=\lfloor \frac{n}{2}-2 \rfloor$.
	 To bound the $\dot{H}^{k_1} \cap \dot{H}^k$ norm, we utilize the equivalence \eqref{Eq:Norms_equiv}, and thereby reduce the analysis to estimating $\partial^{\alpha}\big(u_1u_2u_3 G(|\cdot|^2v^2)\big)$ in $L^2$ for $|\alpha|\in \{k_1,k\}$. 
	 
	 We start with the case $|\alpha|=k_1$. Note that, by Leibnitz rule, it is enough to estimate the following expression
	 \begin{equation}\label{Eq:Schauder_Leibnitz}
	 	I(x):=x^\beta \, \partial^{\alpha_1}u_1\, \partial^{\alpha_2}u_2 \, \partial^{\alpha_3}u_3 \, G^{(\ell)}(|x|^2v^2) \prod_{i=1}^{2\ell}\partial^{\beta_i}v,
	 \end{equation}
 where 
 \begin{equation}\label{Eq:Conds_sum_beta}
 	\sum_{i=1}^{3}|\alpha_i|+ 	\sum_{i=1}^{2\ell}|\beta_i|+ 2\ell- |\beta|=k_1,
 \end{equation}
with the condition that $\ell \leq k_1$ and $|\beta| \leq 2\ell$. To this end, we define 
\begin{equation*}
	a_i:=|\alpha_i|+\frac{2\ell+s-k_1}{2\ell+2}, \quad 
	b_j:=|\beta_j|+\frac{2\ell+s-k_1}{2\ell+2},
\end{equation*}
for $i\in \{ 2,3\}$ and $j \in \{1,2,\dots,2\ell\}$. Now, according to \eqref{Eq:Conds_sum_beta} we have that 
%\begin{equation*}
$	-s+|\alpha_1| + a_2 + a_3 + \sum_{j=1}^{2\ell}b_j=|\beta|,
$
%\end{equation*}
and therefore
\begin{equation}\label{Eq:Schauder_abs}
	|I(x)| \lesssim \frac{1}{|x|^{s-|\alpha_1|}}|\partial^{\alpha_1} u_1| \cdot |x|^{a_2}|\partial^{\alpha_2} u_2| \cdot |x|^{a_3}|\partial^{\alpha_3} u_3| \cdot \prod_{j=1}^{2\ell}|x|^{b_j}|\partial^{\beta_j} v|
\end{equation}
for all $x \in \R^n$.
To estimate the $L^2$ norm of the right side, we put the first term in $L^2$ and the rest of the terms in $L^\infty$. To bound the $L^2$ term we use Hardy's inequality, and for the rest we utilize a inequality \eqref{Eq:Gen_Strauss}, which we, for convenience, prove in a separate section. We are allowed to invoke these inequalities since $0<s-|\alpha_1|<\frac{n}{2}$ and $0 < a_i,b_j <\frac{n-1}{2}.$
Furthermore, having in mind \eqref{Eq:Schauder_conds}, we convince ourselves that
\begin{equation*}
	 s \leq \tfrac{n}{2}-a_i+|\alpha_i| < \tfrac{n}{2} \quad \text{and} \quad s \leq \tfrac{n}{2}-b_j+|\beta_j| < \tfrac{n}{2}.
\end{equation*}
 Finally, this process yields the bound 
\begin{equation}\label{Eq:Schauder_bound}
	\norm{I}_{L^2(
		\R^n)} \lesssim \prod_{i=1}^{3} \norm{u_i}_{\dot{H}^{s} \cap \hsob{k}} \, \norm{v}_{\dot{H}^{s} \cap \hsob{k}}^{2\ell}.
\end{equation}

It remains to treat the case $|\alpha|=k$.  We  consider  two sub-cases.
 First,  we assume that the highest derivative in \eqref{Eq:Schauder_Leibnitz} is of order at least $\frac{n}{2}-1$. Without loss of generality, we assume that is $\alpha_1$. Note that since $k>n$ we have that necessarily  $|\alpha_i|,|\beta_j| \leq k - \frac{n+1}{2}$ for $i\in \{ 2,3\}$ and $j \in \{1,2,\dots,2\ell\}$. Now, we split estimating the $L^2$ norm of $I$ on the unit ball and on its complement. For convenience, we define $a_1:=\min \{1,k-|\alpha_1|\}$. Then, we have that
\begin{equation*}
	|I(x)| \lesssim \frac{1}{|x|^{a_1}}|\partial^{\alpha_1} u_1| \cdot |\partial^{\alpha_2} u_2| \cdot |\partial^{\alpha_3} u_3| \cdot \prod_{j=1}^{2\ell}|\partial^{\beta_j} v|
\end{equation*}
for all $|x|\leq 1$.
To estimate the right side, we put the first term in $L^2$ and use Hardy's inequality, and the rest of the terms we put in $L^\infty$ and estimate them by
\begin{equation}\label{Eq:Schauder_L_infty_embedding}
	\norm{u}_{L^\infty(\R^n)} \lesssim \norm{u}_{\dot{H}^s \cap \hsob{\frac{n+1}{2}}},
\end{equation}
 thereby getting that
\begin{equation}\label{Eq:Schauder_bound_ball}
	\norm{I}_{L^2(
		\B^n)} \lesssim \prod_{i=1}^{3} \norm{u_i}_{\dot{H}^{s} \cap \hsob{k}} \, \norm{v}_{\dot{H}^{s} \cap \hsob{k}}^{2\ell}.
\end{equation}
Now, to treat the complement of the unit ball, we first define
\begin{equation*}
	a_i:=\tfrac{1}{2} +  \min \{ 1,|\alpha_i|\}, \quad 
	b_j:=\tfrac{1}{2}+\min \{ 1,|\beta_j|\},
\end{equation*} 
for $i\in \{ 2,3\}$ and $j \in \{1,2,\dots,2\ell\}$.  Then, since $|\beta| \leq \ell + \sum_{j=1}^{2\ell} \min \{1,|\beta_j| \}$ we have that $|\beta| \leq -a_1+a_2 + a_3 + \sum_{j=1}^{2\ell} b_j$, and we therefore conclude that
\begin{equation}\label{Eq:Schauder_complement}
	|I(x)| \lesssim \frac{1}{|x|^{a_1}}|\partial^{\alpha_1} u_1| \cdot |x|^{a_2}|\partial^{\alpha_2} u_2| \cdot |x|^{a_3}|\partial^{\alpha_3} u_3| \cdot \prod_{j=1}^{2\ell}|x|^{b_j}|\partial^{\beta_j} v|
\end{equation}
for all $|x|\geq 1$. Then, by  the usual  $L^2-L^\infty$  splitting, but this time using \eqref{Eq:Gen_Strauss} to estimate the $L^\infty$ norms, we get that
\begin{equation}\label{Eq:Schauder_bound_complement}
	\norm{I}_{L^2(
		\R^n\setminus\B^n)} \lesssim \prod_{i=1}^{3} \norm{u_i}_{\dot{H}^{s} \cap \hsob{k}} \, \norm{v}_{\dot{H}^{s} \cap \hsob{k}}^{2\ell}.
\end{equation}
Now we treat the second sub-case.  Namely, we assume that all of the derivatives in \eqref{Eq:Schauder_Leibnitz} are of order at most $\frac{n-3}{2}$. On the unit ball, we have that
\begin{equation*}
	|I(x)| \lesssim \frac{1}{|x|^{\frac{n-1}{2}-|\alpha_1|}}|\partial^{\alpha_1} u_1| \cdot |\partial^{\alpha_2} u_2| \cdot |\partial^{\alpha_3} u_3| \cdot \prod_{j=1}^{2\ell}|\partial^{\beta_j} v|.
\end{equation*}
Therefore, the $L^2-L^\infty$ splitting, together with Hardy's inequality and \eqref{Eq:Schauder_L_infty_embedding}, yield \eqref{Eq:Schauder_bound_ball}. For the complement, we do the following. First,  we define
\begin{equation*}
	a_i:=|\alpha_i|+\tfrac{3}{4}, \quad 
	b_j:=|\beta_j|+\tfrac{3}{4},
\end{equation*}
and observe that
\begin{equation*}\label{Eq:Schauder_abs_2}
	|I(x)| \lesssim \frac{1}{|x|^{\frac{n-1}{2}-|\alpha_1|}}|\partial^{\alpha_1} u_1| \cdot |x|^{a_2}|\partial^{\alpha_2} u_2| \cdot |x|^{a_3}|\partial^{\alpha_3} u_3| \cdot \prod_{j=1}^{2\ell}|x|^{b_j}|\partial^{\beta_j} v|,
\end{equation*}
for all $|x| \geq 1$. Then,  by Hardy's inequality and \eqref{Eq:Gen_Strauss}, we get  \eqref{Eq:Schauder_bound_complement}. This  finishes the proof.
\end{proof}

\begin{remark}\label{Rem:Sch1}
	We emphasize that the strength of this proposition lies in the fact that it applies to any smooth and even function $F$ with   bounded derivatives of  all orders. This, in particular, yields existence of $\mc H^{s,k}$ spaces in which one has local Lipschitz continuity for nonlinearities appearing in the corotational wave maps equation for arbitrary compact target manifolds.
\end{remark}
\begin{remark}\label{Rem:Sch2}
	 Note that \eqref{Eq:Schauder_estimate} provides more than what we need for the nonlinear estimate \eqref{Eq:Nonlin_est}. Namely, the norm we are bounding in \eqref{Eq:Schauder_estimate} controls the $\dot{H}^{s-1} \cap \dot{H}^{k-1}$ norm appearing in \eqref{Eq:Nonlin_est}. Note, however, that this strong version of the Schauder estimate is crucially used in the proof of Proposition \ref{Prop:Upgrade_to_class}. In addition to this, the anticipated benefit of this estimate is the fact that it yields control of the nonlinearity $N(\cdot,u)$ as an operator on $\dot{H}^{s} \cap \dot{H}^{k}$, which is likely to be useful in the treatment of the analogous problems in parabolic flows.
\end{remark}

\section{A Sobolev inequality for derivatives of radial functions}\label{Sec:Sobolev_ineq}

\noindent In this section we prove what appears to be a new inequality for the weighted $L^\infty$-norms of derivatives of radial Sobolev functions. The power of this estimate lies in the fact that, among other anticipated uses, it allows for short and elegant proofs of Schauder estimates like the one from Appendix \ref{Sec:Schauder}.

	\begin{proposition}\label{Prop:Strauss}
	Assume
%	\begin{equation*}
$
	 n \geq 2$ and $\frac{1}{2} < s < \frac{n}{2}.
$
%	\end{equation*}
	Then, given $\alpha \in \mathbb{N}_0^n$ we have that
	\begin{equation}\label{Eq:Gen_Strauss}
		\| |\cdot|^{\frac{n}{2}-s} \partial^\alpha u\|_{L^\infty(\R^n)} \lesssim \| u \|_{\dot{H}^{|\alpha|+s}(\R^n)} 
	\end{equation}
	for all $u \in C^\infty_{c,\emph{rad}}(\R^n)$.
\end{proposition}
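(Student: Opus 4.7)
The plan is to work directly with the Fourier representation of $u$, exploiting that $u$ being radial forces $\hat{u}$ to be radial, so that $\partial^\alpha u(x)$ admits a one--dimensional integral representation in the radial frequency variable. Once such an expression is obtained, elementary Bessel--function asymptotics produce the sharp weighted pointwise bound via Cauchy--Schwarz, with the two endpoint constraints on $s$ emerging naturally.

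First, I would write $\hat{u}(\xi) = U(|\xi|)$ and pass to spherical coordinates $\xi = \rho\omega$ in the Fourier inversion formula for $\partial^\alpha u = \mc{F}^{-1}[(i\xi)^\alpha \hat{u}]$. This yields
\begin{equation*}
\partial^\alpha u(x) = (2\pi)^{-n/2} i^{|\alpha|} \int_0^\infty U(\rho)\, \rho^{n-1+|\alpha|}\, K_\alpha(\rho x)\, d\rho, \qquad K_\alpha(y) := \int_{S^{n-1}} \omega^\alpha e^{iy\cdot\omega}\, d\sigma(\omega).
\end{equation*}
Two bounds on $K_\alpha$ suffice: the trivial estimate $|K_\alpha(y)| \lesssim 1$ from $\omega^\alpha \in L^\infty(S^{n-1})$, and the decay estimate $|K_\alpha(y)| \lesssim |y|^{-(n-1)/2}$ for $|y| \geq 1$. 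The latter follows by decomposing the polynomial $\omega^\alpha$ into finitely many spherical harmonics and applying the Funk--Hecke formula together with the classical asymptotics $|J_\nu(t)| \lesssim t^{-1/2}$ at each harmonic level.

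With $r = |x|$, I would split the $\rho$-integral at $\rho = 1/r$: on the low-frequency piece use $|K_\alpha| \lesssim 1$, on the high-frequency piece use the decay bound. This gives
\begin{equation*}
|\partial^\alpha u(x)| \lesssim \int_0^{1/r} |U(\rho)|\, \rho^{n-1+|\alpha|}\, d\rho \;+\; r^{-(n-1)/2} \int_{1/r}^\infty |U(\rho)|\, \rho^{(n-1)/2+|\alpha|}\, d\rho.
\end{equation*}
On each piece I would apply Cauchy--Schwarz against the weight $\rho^{2(s+|\alpha|)+n-1}\, d\rho$, which is exactly the weight making $\|U\|_{L^2(\rho^{2(s+|\alpha|)+n-1}\, d\rho)} \simeq \|u\|_{\dot{H}^{s+|\alpha|}(\R^n)}$. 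The residual weight integrals are $\int_0^{1/r} \rho^{n-1-2s}\, d\rho \sim r^{2s-n}$ and $\int_{1/r}^\infty \rho^{-2s}\, d\rho \sim r^{2s-1}$; the former converges at zero precisely because $s < n/2$, and the latter at infinity precisely because $s > 1/2$. Substituting back, both pieces contribute $r^{s-n/2}\|u\|_{\dot{H}^{s+|\alpha|}(\R^n)}$, which multiplied by the prefactor $r^{n/2-s}$ delivers the claim.

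The main work, and the step where the proof could derail, is establishing the uniform decay $|K_\alpha(y)| \lesssim |y|^{-(n-1)/2}$ for general $\alpha$. For $|\alpha| = 0$ this is just the classical identity $K_0(y) = c_n |y|^{-(n-2)/2} J_{(n-2)/2}(|y|)$ combined with Bessel asymptotics; for $|\alpha| \geq 1$ one must verify that inserting the polynomial $\omega^\alpha$ preserves the same $|y|^{-(n-1)/2}$ rate, which the spherical harmonic expansion guarantees because all harmonic modes transform to Bessel functions of order $\ell + (n-2)/2 \geq (n-2)/2$ and hence enjoy the same large-argument decay. Once this kernel estimate is in hand, all remaining steps are a direct computation, and the two endpoint thresholds in $\tfrac{1}{2} < s < \tfrac{n}{2}$ appear automatically as the precise conditions making the Cauchy--Schwarz weight integrals finite.
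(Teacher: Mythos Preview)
Your proposal is correct and is essentially the same argument as the paper's. Your kernel $K_\alpha(y)=\int_{S^{n-1}}\omega^\alpha e^{iy\cdot\omega}\,d\sigma(\omega)$ is, up to a constant, exactly the function $\partial^\alpha H(y)$ with $H(y)=J_{(n-2)/2}(|y|)/|y|^{(n-2)/2}$ that the paper uses, and the decisive bound $|K_\alpha(y)|\lesssim\langle y\rangle^{-(n-1)/2}$ is the same in both proofs. The only cosmetic difference is that the paper applies Cauchy--Schwarz once against the full weight and then evaluates $\int_0^\infty \langle r|x|\rangle^{1-n}r^{n-1-2s}\,dr\simeq |x|^{2s-n}$ directly (the conditions $\tfrac12<s<\tfrac n2$ appear as integrability at $0$ and $\infty$), whereas you split at $\rho=1/|x|$ before Cauchy--Schwarz; the outcomes are identical.
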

\begin{proof}
	
To establish this proposition, we use the $J$-Bessel function representation of the Fourier transform of radial functions. Namely, we have that for $u \in C^\infty_{c,\text{rad}}(\R^n)$
\begin{equation}\label{Eq:J_Bes}
	u(x) = \int_{0}^{\infty} \frac{J_{\frac{n}{2}-1}(|x|r)}{(|x|r)^{\frac{n}{2}-1}}\hat{u}(r)r^{n-1}dr,
\end{equation}
where $\hat{u}(|\cdot|)=\mc Fu$; see, e.g., \cite{Gra08}, p.~429. For convenience, we define
\begin{equation*}
	H(x):=\frac{J_{\frac{n}{2}-1}(|x|)}{|x|^{\frac{n}{2}-1}}.
\end{equation*}
From the properties of $J$-Bessel functions (see, e.g., ~\cite{NIST10}) we have that $H \in C_{\text{rad}}^\infty(\R^n)$ and given $\alpha \in \mathbb{N}_0^n$
\begin{equation}\label{Eq:H_est}
	|\partial^\alpha H(x)| \lesssim \inner{x}^{-\frac{n-1}{2}}
\end{equation}
for all $x \in \R^n$. Now, from \eqref{Eq:J_Bes} we get by dominated convergence theorem that
\begin{equation*}
	\partial^\alpha u(x) = \int_{0}^{\infty}\partial^\alpha H(rx) \,\hat{u}(r)r^{n+|\alpha|-1}dr.
\end{equation*}
Then, by Cauchy-Schwarz and \eqref{Eq:H_est} we get that given $\frac{1}{2} < s < \frac{n}{2}$
\begin{align*}
	|\partial^\alpha u(x)| &\leq \left( \int_{0}^{\infty}|\partial^\alpha H(rx)|^2 r^{-2s+n-1}dr \right)^{\frac{1}{2}}\left( \int_{0}^{\infty}|\hat{u}(r)|^2 r^{2s+2|\alpha|+n-1}dr \right)^{\frac{1}{2}}\\
	& \lesssim 
	 \left( \int_{0}^{\infty}\inner{rx}^{1-n} r^{-2s+n-1}dr \right)^{\frac{1}{2}}\left( \int_{0}^{\infty}|\hat{u}(r)|^2 r^{2s+2|\alpha|+n-1}dr \right)^{\frac{1}{2}} \\
	 &\lesssim |x|^{s-\frac{n}{2}} \left( \int_{0}^{\infty}(1+r^2)^{\frac{1-n}{2}} r^{-2s+n-1}dr \right)^{\frac{1}{2}} \| u \|_{\dot{H}^{s+|\alpha|}(\R^n)},
\end{align*}
for all $u \in C^\infty_{c,\text{rad}}(\R^n)$ and all $x \neq 0$.
From here, the estimate \eqref{Eq:Gen_Strauss} follows.
\end{proof}
	
\section{On restriction of semigroups}\label{Sec:Restr_semigr}

\noindent In this section we establish a restriction property of semigroups acting on two Banach spaces for which one is embedded in the other. In addition to crucially using it in Section \ref{Sec:Upgrade_to_class}, we expect this result to play an important role in regularity arguments in more general contexts.

\begin{lemma}\label{Lem:Restriction}
	Let $(X,\|\cdot\|_X)$ and $(Y,\|\cdot\|_Y)$ be separable Banach spaces. Furthermore, assume the following hold.
	\begin{itemize}[leftmargin=5mm]
		\setlength\itemsep{1mm}
		\item[\tiny$\bullet$] The space $X$ is continuously embedded in $Y$.
		\item[\tiny$\bullet$] There is a vector space $Z$ which is contained and dense in both $X$ and $Y$.
		\item[\tiny$\bullet$] There is a linear operator $ L:Z \rightarrow Z$ which is closable in both $X$ and $Y$. We furthermore denote by
		$$
		L_X:\mc D ( L_X) \subseteq X \rightarrow X
		\quad \text{and}  \quad 
		L_Y:\mc D ( L_Y) \subseteq Y \rightarrow Y
		$$
		 the closures of $L$ in $X$ and $Y$ respectively.
		 \item[\tiny$\bullet$] The operators $L_X$ and $ L_Y$ generate strongly continuous semigroups $(S_X(t))_{t \geq 0} \subseteq \mc B(X)$ and $( S_Y(t))_{t \geq 0} \subseteq \mc B(Y)$ respectively.
	\end{itemize}
\smallskip
  Then $S_X(t)$ is the restriction of $S_Y(t)$ to $X$ for all $t \geq 0$. 
\end{lemma}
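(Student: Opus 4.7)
The plan is a classical Laplace-transform identification. First, it suffices to establish $S_X(t) v = S_Y(t) v$ (as an equality in $Y$) for every $v \in Z$ and every $t \geq 0$. Indeed, given $u \in X$, pick $v_n \in Z$ with $v_n \to u$ in $X$; the continuous embedding $X \hookrightarrow Y$ gives $v_n \to u$ in $Y$, and the local uniform boundedness of each semigroup (on its own space), combined with the embedding, lets one pass to the limit on both sides, yielding $S_X(t) u = S_Y(t) u$ in $Y$.

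The heart of the argument is a resolvent comparison: for $v \in Z$ and every real $\la$ exceeding the growth bounds of both $(S_X(t))_{t\geq0}$ and $(S_Y(t))_{t\geq0}$, the claim is
\begin{equation*}
R_{L_X}(\la) v = R_{L_Y}(\la) v \quad \text{in } Y.
\end{equation*}
To prove this, set $u := R_{L_X}(\la) v \in \mc D(L_X) \subseteq X$. Since $L_X$ is by definition the closure of $L|_Z$, the space $Z$ is a core for $L_X$, so there exist $u_n \in Z$ with $u_n \to u$ and $L u_n \to L_X u$ in $X$. The embedding $X \hookrightarrow Y$ promotes both convergences to convergences in $Y$. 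But $u_n \in Z \subseteq \mc D(L_Y)$ with $L_Y u_n = L u_n$, and $L_Y$ is closed in $Y$, so $u \in \mc D(L_Y)$ and $L_Y u = L_X u$ as elements of $Y$. Hence $(\la I - L_Y) u = v$ in $Y$, i.e., $u = R_{L_Y}(\la) v$.

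With the resolvents identified on $Z$, invoke the standard integral representation
\begin{equation*}
R_{L_X}(\la) v = \int_0^\infty e^{-\la t} S_X(t) v \, dt, \qquad R_{L_Y}(\la) v = \int_0^\infty e^{-\la t} S_Y(t) v \, dt,
\end{equation*}
valid in $X$ and $Y$ respectively for all sufficiently large $\la$; by the embedding, the first integral also converges in $Y$ and represents the same element of $Y$. The resolvent identity thus reads
\begin{equation*}
\int_0^\infty e^{-\la t} \big(S_X(t) v - S_Y(t) v \big)\, dt = 0 \quad \text{in } Y
\end{equation*}
for a full right half-plane of $\la$. Uniqueness of the vector-valued Laplace transform (obtained by testing against $Y^*$ and reducing to the scalar statement) forces $S_X(t) v = S_Y(t) v$ in $Y$ for every $t \geq 0$, and the density reduction from the first paragraph finishes the proof.

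The main technical obstacle is the resolvent comparison: the element $u$ is a priori known only to lie in $\mc D(L_X)$, and its approximating sequence exists only in $X$, so the closedness of $L_Y$ in $Y$ must cooperate with the embedding to pull the limit into $\mc D(L_Y)$. This is precisely the point at which the hypothesis that $L$ be defined on a common dense subspace $Z$ (not merely on separate cores for $L_X$ and $L_Y$) is indispensable.
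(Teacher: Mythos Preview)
Your proof is correct and shares with the paper the same decisive step: showing that $\mc D(L_X)\subseteq\mc D(L_Y)$ with $L_Yu=L_Xu$ (equivalently, the resolvent restriction $R_{L_X}(\la)=R_{L_Y}(\la)|_X$), obtained by approximating $u\in\mc D(L_X)$ from the core $Z$, pushing the convergence through the embedding $X\hookrightarrow Y$, and invoking closedness of $L_Y$.

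Where you diverge is in the final identification of the semigroups. The paper, having the resolvent identity on all of $X$, applies the Post--Widder inversion formula
\[
S_X(t)x=\lim_{n\to\infty}\Big(\tfrac{n}{t}\,R_{L_X}\!\big(\tfrac{n}{t}\big)\Big)^{n}x
=\lim_{n\to\infty}\Big(\tfrac{n}{t}\,R_{L_Y}\!\big(\tfrac{n}{t}\big)\Big)^{n}x=S_Y(t)x,
\]
which yields $S_X(t)=S_Y(t)|_X$ directly for every $x\in X$, with no separate density step needed. Your route instead uses the Laplace-transform representation of the resolvent and vector-valued Laplace uniqueness (reduced to the scalar case via $Y^*$), which gives the identity first on $Z$ and then on $X$ by density. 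Both arguments are standard and of comparable difficulty; the paper's is marginally more economical in that the density reduction is absorbed into Post--Widder, while yours avoids citing that inversion theorem at the cost of a short extra paragraph. Note also that your resolvent argument already proves the full restriction $R_{L_X}(\la)=R_{L_Y}(\la)|_X$ (not just on $Z$), so you could in fact shortcut to Post--Widder from there as well.
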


\begin{proof} What follows is an adaptation of the proof of a special case of this lemma which appeared in the work of Csobo, Sch\"orkhuber and the author; see \cite{CsoGloSch21}, Lemma 3.5.   
	First, we show that $L_X$ is a restriction of $L_Y$, i.e., we prove that
	\begin{equation}\label{Eq:Equal_L}
		\mc D( L_X) \subseteq \mc D( L_Y), \quad \text{and} \quad  L_X x =  L_Yx \quad  \text{for all} \quad x \in \mc D( L_X). 
	\end{equation} 
	If $x \in Z$ then simply from the definition of $L_X$ and $L_Y$ we have that $x \in \mc D( L_Y)$ and $ L_X x =  L x =  L_Y x$. 
	We therefore now assume that $x \in \mc D( L_X)\setminus Z$. In that case, from the closedness of $ L_X$ we infer the existence of a sequence $(x_n)_{n \in \mathbb{N}} \subseteq Z$ for which 
	\begin{equation}\label{Eq:Conv}
		x_n \xrightarrow{X} x  \quad \text{and} \quad  L x_n \xrightarrow{X}  L_X x.
	\end{equation}
	Since $X \hookrightarrow Y$, it follows that the convergences above also hold in $Y$. By the closedness of $ L_Y$ it follows that 
	$x \in \mc D( L_Y)$ and $ L_Xx =  L_Yx$, and this finishes the proof of \eqref{Eq:Equal_L}.
	
	Now, note that for $\lambda \in \rho( L_X) \cap \rho( L_Y)$, from \eqref{Eq:Equal_L} the following restriction law for the resolvents holds $ R_{ L_X}(\lambda) =  R_{ L_Y}(\lambda)|_{X}$. Therefore, given $x \in X$, by Post-Widder inversion formula (see, e.g., \cite{EngNag00}, p.~223, Corollary 5.5) and the fact that $X \hookrightarrow Y$ it follows that for every $t>0$
	\begin{equation}\label{Eq:Semigroups}
		 S_X(t)x 
		=
		\lim_{n \rightarrow \infty} \left(\tfrac{n}{t} R_{ L_X}\left(\tfrac{n}{t}\right)\right)^n x
		= 
		\lim_{n \rightarrow \infty} \left(\tfrac{n}{t} R_{L_Y}\left(\tfrac{n}{t}\right)\right)^n x
		=
		 S_Y(t)x,
	\end{equation}
	where the first limit is taken in $X$ and the second one in $Y$. This finishes the proof. 
\end{proof}

\emph{Acknowledgment.} The author would like to thank Birgit Sch\"orkhuber and Sarah Kistner for useful comments on an early version of the paper.

\bibliography{refs_WM_global_higher_dim}
\bibliographystyle{plain}

\end{document}